\documentclass[10pt,letterpaper]{article}
\usepackage[text={5.5in, 8.25in},centering]{geometry}
\usepackage{fancyhdr}
\fancyhf{}
%\pagestyle{fancy}
%\rfoot{Page \thepage}
%\chead{\textit{Draft -- Do Not Redistribute}}

\usepackage{amssymb}
\usepackage{amsfonts}
\usepackage{amsmath}
\usepackage{amsthm}

\usepackage{listings}
\usepackage{matlab-prettifier}
\usepackage{graphicx}
\usepackage{placeins}
\usepackage[skip=6pt]{caption}
\usepackage{epsfig,subfigure}
\usepackage{mathrsfs}
\usepackage{multicol,multirow}
\usepackage{placeins}
\usepackage{setspace}
\usepackage{xspace}
\usepackage{appendix}
\usepackage{scalerel} % For defining \startimes below /Osman
\usepackage{ulem}

\usepackage{caption}
\captionsetup[figure]{font=small}
\usepackage{url}
\usepackage{todonotes}
\usepackage{xfrac}
\usepackage[alphabetic]{amsrefs}
\usepackage[colorlinks=true, linkcolor=blue, citecolor=black]{hyperref}%
% Alternatively, can specify "citecolor=blue".
\usepackage[capitalize]{cleveref}
\newcommand\myurl[1]{\url{#1}}

\BibSpec{webpage}{%
  +{}{\PrintAuthors} {author}
  +{,}{ \textit} {title}
  +{}{ \parenthesize} {date}
  +{,}{ \myurl} {myurl}
  +{,}{ } {note}
  +{.}{ } {transition}
}

% Theorems, definitions, etc...
\newtheorem{theorem}{Theorem}
\newtheorem{lemma}[theorem]{Lemma}

\newtheorem{proposition}[theorem]{Proposition}
\newtheorem{definition}[theorem]{Definition}

\newtheoremstyle{restated}{}{}{\itshape}{}{\bfseries}{.}{.5em}{\thmnote{#3}}
\theoremstyle{restated}

\theoremstyle{remark}

\newtheorem{remark}{Remark}

\crefname{definition}{Definition}{Definitions}
\crefname{lemma}{Lemma}{Lemmas}
\crefname{theorem}{Theorem}{Theorems}
\crefname{proposition}{Proposition}{Propositions}
\crefname{figure}{Figure}{Figures}

\Crefname{definition}{definition}{definitions}
\Crefname{lemma}{lemma}{lemmas}
\Crefname{theorem}{theorem}{theorems}
\Crefname{proposition}{proposition}{propositions}
\Crefname{figure}{figure}{figures}

% Vectors, matrices, and equation typesetting.
\usepackage{relsize}
\usepackage{bm} % boldmath must be called after the package
\providecommand{\mathbold}[1]{\bm{\mathsf{#1}}}
\newcommand{\R}{\mathbb{R}}

\newcommand{\E}{\mathbb{E}}
\newcommand{\vct}[1]{\bm{#1}}

\newcommand{\mtx}[1]{\mathbold{#1}}

\usepackage{stmaryrd}

\newcommand\scalemath[2]{\scalebox{#1}{\mbox{\ensuremath{\displaystyle #2}}}}
\newcommand{\norm}[1]{\left\|#1\right\|}
\newcommand{\sigmaMin}[1]{\sigma_{\min}\left(#1\right)}
\newcommand{\sigmaMax}[1]{\sigma_{\max}\left(#1\right)}
\newcommand{\sigmaSub}[2]{\sigma_{#1}\left(#2\right)}

%%%%%%%%%%%%%%%%%%%%%%%%%%%%%%%%%%%%%%%%%%%%%%%%%%%%
%
%   Define xoverline, xunderline, xhat, and xcheck
%
%%%%%%%%%%%%%%%%%%%%%%%%%%%%%%%%%%%%%%%%%%%%%%%%%%%%

%%%%%%%%%%%%%%%%%%%%%%%%%%%%%%%%%%%%%%%%%%%%%%%%%%%%
%
%   xoverline: \overline with variable length
%
%%%%%%%%%%%%%%%%%%%%%%%%%%%%%%%%%%%%%%%%%%%%%%%%%%%%
\makeatletter
\newsavebox\myboxA
\newsavebox\myboxB
\newlength\mylenA
\newcommand*\xoverline[2][0.75]{%
    \sbox{\myboxA}{$\m@th#2$}%
    \setbox\myboxB\null% Phantom box
    \ht\myboxB=\ht\myboxA%
    \dp\myboxB=\dp\myboxA%
    \wd\myboxB=#1\wd\myboxA% Scale phantom
    \sbox\myboxB{$\m@th\overline{\copy\myboxB}$}%  Overlined phantom
    \setlength\mylenA{\the\wd\myboxA}%   calc width diff
    \addtolength\mylenA{-\the\wd\myboxB}%
    \ifdim\wd\myboxB<\wd\myboxA%
       \rlap{\hskip 0.5\mylenA\usebox\myboxB}{\usebox\myboxA}%
    \else
        \hskip -0.5\mylenA\rlap{\usebox\myboxA}{\hskip 0.5\mylenA\usebox\myboxB}%
    \fi}
\makeatother

%%%%%%%%%%%%%%%%%%%%%%%%%%%%%%%%%%%%%%%%%%%%%%%%%%%%%%
%
%   xunderline: \underline with variable length
%
%%%%%%%%%%%%%%%%%%%%%%%%%%%%%%%%%%%%%%%%%%%%%%%%%%%%%%
\makeatletter
\newcommand*\xunderline[2][0.75]{%
    \sbox{\myboxA}{$\m@th#2$}%
    \setbox\myboxB\null% Phantom box
    \ht\myboxB=\ht\myboxA%
    \dp\myboxB=\dp\myboxA%
    \wd\myboxB=#1\wd\myboxA% Scale phantom
    \sbox\myboxB{$\m@th\underline{\copy\myboxB}$}%  underlined phantom
    \setlength\mylenA{\the\wd\myboxA}%   calc width diff
    \addtolength\mylenA{-\the\wd\myboxB}%
    \ifdim\wd\myboxB<\wd\myboxA%
       \rlap{\hskip 0.5\mylenA\usebox\myboxB}{\usebox\myboxA}%
    \else
        \hskip -0.5\mylenA\rlap{\usebox\myboxA}{\hskip 0.5\mylenA\usebox\myboxB}%
    \fi}
\makeatother

%%%%%%%%%%%%%%%%%%%%%%%%%%%%%%%%%%%%%%%%%%%%%%%%
%
%   xhat: \hat with variable length
%
%%%%%%%%%%%%%%%%%%%%%%%%%%%%%%%%%%%%%%%%%%%%%%%%
\makeatletter
\newcommand*\xwidehat[2][0.8]{%
    \sbox{\myboxA}{$\m@th#2$}%
    \setbox\myboxB\null% Phantom box
    \ht\myboxB=\ht\myboxA%
    \dp\myboxB=\dp\myboxA%
    \wd\myboxB=#1\wd\myboxA% Scale phantom
    \sbox\myboxB{$\m@th\widehat{\copy\myboxB}$}%  underlined phantom
    \setlength\mylenA{\the\wd\myboxA}%   calc width diff
    \addtolength\mylenA{-\the\wd\myboxB}%
    \ifdim\wd\myboxB<\wd\myboxA%
       \rlap{\hskip 0.5\mylenA\usebox\myboxB}{\usebox\myboxA}%
    \else
        \hskip -0.5\mylenA\rlap{\usebox\myboxA}{\hskip 0.5\mylenA\usebox\myboxB}%
    \fi}
\makeatother

%%%%%%%%%%%%%%%%%%%%%%%%%%%%%%%%%%%%%%%%%%%%%%%%%%
%
%   xcheck: \check with variable length
%
%%%%%%%%%%%%%%%%%%%%%%%%%%%%%%%%%%%%%%%%%%%%%%%%%%
%% Need code from mathabx.sty and mathabx.dcl
%% for an initial "widecheck" symbol.
\DeclareFontFamily{U}{mathx}{\hyphenchar\font45}
\DeclareFontShape{U}{mathx}{m}{n}{
      <5> <6> <7> <8> <9> <10>
      <10.95> <12> <14.4> <17.28> <20.74> <24.88>
      mathx10
      }{}
\DeclareSymbolFont{mathx}{U}{mathx}{m}{n}
\DeclareFontSubstitution{U}{mathx}{m}{n}
\DeclareMathAccent{\widecheck}{0}{mathx}{"71}
\DeclareMathAccent{\wideparen}{0}{mathx}{"75}

\makeatletter
\newcommand*\xwidecheck[2][0.8]{%
    \sbox{\myboxA}{$\m@th#2$}%
    \setbox\myboxB\null% Phantom box
    \ht\myboxB=\ht\myboxA%
    \dp\myboxB=\dp\myboxA%
    \wd\myboxB=#1\wd\myboxA% Scale phantom
    \sbox\myboxB{$\m@th\widecheck{\copy\myboxB}$}%  underlined phantom
    \setlength\mylenA{\the\wd\myboxA}%   calc width diff
    \addtolength\mylenA{-\the\wd\myboxB}%
    \ifdim\wd\myboxB<\wd\myboxA%
       \rlap{\hskip 0.5\mylenA\usebox\myboxB}{\usebox\myboxA}%
    \else
        \hskip -0.5\mylenA\rlap{\usebox\myboxA}{\hskip 0.5\mylenA\usebox\myboxB}%
    \fi}
\makeatother

\newcommand{\sktext}{\mathrm{sk}}
\newcommand{\sk}[1]{
    %\xhat{#1}
    {#1}^{\sktext}
}
\newcommand{\pre}[1]{
    %\xcheck{#1}
    {#1}^{\mathrm{pre}}
}

\newcommand{\trun}[1]{
	%\xcheck{#1}
	{#1}
}

\newcommand{\Roo}{\mtx{A}}
\newcommand{\Rot}{\mtx{B}}
\newcommand{\Rtt}{\mtx{C}}

\newcommand{\roundoff}{\mathsf{u}\xspace}

\newcommand{\orthtol}{\epsilon_{\mathrm{tol}}}
\newcommand{\distortion}{\delta}
\usepackage{lmodern}

\newcommand{\fmtx}[1]{
\mtx{#1}
}

\DeclareMathOperator{\range}{range}
\DeclareMathOperator{\rank}{rank}
\DeclareMathOperator{\diag}{diag}

\DeclareMathOperator{\cond}{\kappa}

% conventions

% software
\newcommand{\RandLAPACK}{\textsf{RandLAPACK}\xspace}
\newcommand{\RBLAS}{\textsf{RandBLAS}\xspace}
\newcommand{\RandBLAS}{\RBLAS}
\newcommand{\BLAS}{\textsf{BLAS}\xspace}
\newcommand{\BLASlev}[1]{\textsf{Level~#1 BLAS}\xspace}
\newcommand{\LAPACK}{\textsf{LAPACK}\xspace}

\newcommand{\code}[1]{\texttt{#1}}

\newcommand{\leadslicespace}{\hspace{0.125em}}
\newcommand{\trailslicespace}{\hspace{0.025em}}
\newcommand{\fslice}{\leadslicespace{}{:}\trailslicespace{}}  % full slice of one axis
\newcommand{\lslice}[1]{{1}{:}{#1}}  % a slice of leading entries of an axis
\newcommand{\tslice}[1]{{#1}{:}}  % a slice of trailing entries of an axis
\newcommand{\trans}{*}

%%%%%%%%%%%%%%%%%%%%%%%%%%%%%%%% %%%%%%%%%%%%%%%%%%%%%%%%%%%%
% Ideally, all of this should be moved to preamble.tex
\usepackage{algorithm}
\usepackage{algorithmicx}   
\usepackage[noend]{algpseudocode}  % for MATLAB-like code, remove the "noend"
\algdef{SE}[SUBALG]{Indent}{EndIndent}{}{\algorithmicend\ }
\algtext*{Indent}
\algtext*{EndIndent}
\algdef{SE}[DOWHILE]{Do}{doWhile}{\algorithmicdo}[1]{\algorithmicwhile\ #1}%
\newcommand{\codecomment}[1]{\textcolor{gray}{\texttt{//} #1}}
\usepackage[capitalize]{cleveref}
\usepackage{overpic}
\usepackage{xcolor}
\usepackage{stackengine}
\usepackage[graphicx]{realboxes}

\usepackage{lineno}
%\linenumbers

%%%%%%%%%%%%%%%%%%%%%%%%%%%%%%%%%%%%%%%%%%%%%%%%%%%%%%%%%%%%

\newcommand{\footremember}[2]{%
    \footnote{#2}
    \newcounter{#1}
    \setcounter{#1}{\value{footnote}}%
}
\newcommand{\footrecall}[1]{%
    \footnotemark[\value{#1}]%
} 

\newcommand\blfootnote[1]{%
  \begingroup
  \renewcommand\thefootnote{}\footnote{#1}%
  \addtocounter{footnote}{-1}%
  \endgroup
}

%\definechangesauthor[name=piotr, color=cyan]{PL}
\title{
\vspace{-1cm}
CholeskyQR with Randomization and Pivoting for Tall Matrices (CQRRPT)}
\author{
    Maksim Melnichenko\footremember{ICL}{Innovative Computing Laboratory, University of Tennessee, Knoxville}
        % \footremember{LBNL}{Lawrence Berkeley National Laboratory}
        % \footremember{Berkeley}{University of California, Berkeley}
    \and
    Oleg Balabanov\footremember{Berkeley}{University of California, Berkeley}\footremember{ICSI}{International Computer Science Institute (ICSI)}
    \and 
    Riley Murray\footremember{Sandia}{Sandia National Laboratories}
        % \footnote{\textit{Formerly:} Berkeley, LBNL, and ICSI}
        % \footrecall{LBNL}
        % \footrecall{Berkeley}
    \and
    James Demmel\footrecall{Berkeley}%\footremember{LBNL}{Lawrence Berkeley National Laboratory}
    \and
    Michael W. Mahoney\footrecall{ICSI} \footremember{LBNL}{Lawrence Berkeley National Laboratory} \footrecall{Berkeley}
    \and
    Piotr Luszczek\footremember{MITLL}{MIT Lincoln Laboratory}\footrecall{ICL}
}
%\date{\today}

\begin{document}

\maketitle

%\listofchanges

\begin{abstract}
This paper develops and analyzes a new algorithm for QR decomposition
with column pivoting (QRCP) of rectangular matrices with many more rows than columns.
The algorithm carefully combines methods from randomized numerical linear algebra
to accelerate pivot
decisions for the input matrix \textit{and} the process of decomposing
the pivoted matrix into the QR form.
The source of the latter improvement is CholeskyQR with randomized preconditioning.
Comprehensive analysis is provided in both exact and finite-precision
arithmetic to characterize the algorithm's rank-revealing properties
and its numerical stability granted probabilistic assumptions of the
sketching operator.
An implementation of the proposed algorithm is described and made
available inside the open-source \RandLAPACK library, which itself
relies on \RandBLAS.
 Experiments with this implementation on an Intel Xeon Gold 6248R CPU demonstrate order-of-magnitude speedups over \LAPACK's standard function for QRCP, and comparable performance to a specialized algorithm
for unpivoted QR of tall matrices, which lacks the strong rank-revealing
properties of the proposed method.    \blfootnote{
        Funding acknowledgments and additional affiliations appear at the end of the paper.
    }\blfootnote{
     % Send correspondence to the first three authors, at mmelnic1@vols.utk.edu, olegbalabanov@gmail.com, and rjmurr@sandia.gov, respectively.
    Send correspondence to mmelnic1@vols.utk.edu.
    }
\end{abstract}

\section{Introduction}
\label{intro}

The QR factorization, considered one of ``The Big Six Matrix Factorizations''~\cite{Higham:blog:big6}, is fundamental to the field of numerical linear algebra.
Its applications range from linear least squares problems~\cite{bjork1996lsq} and block orthogonalization~\cite{Stath:2002} to contemporary randomized low-rank approximation algorithms~\cite{tropp2023randomized}.
Beyond the basic QR decomposition, QR with column pivoting (QRCP)
has additional benefits for numerically challenging problems, as it can help with rank-revealing tasks.
To describe this problem concretely, let $\mtx{M}$ be a matrix of size $m \times n$ with $m \geq n$.
QRCP is concerned with finding a column permutation matrix $\mtx{\Pi}$ and
a QR decomposition of the product $\mtx{M}\mtx{\Pi}$, i.e.,
\[
    \mtx{M}\mtx{\Pi} = \mtx{Q}\mtx{R}
\]
such that the information on the leading and trailing singular values of
$\mtx{M}$ can be inferred from the spectra of leading and trailing blocks in $2 \times 2$ partitions of $\mtx{R}$.

\begin{remark}
    We use the term \textit{spectrum} in reference to singular values, not eigenvalues.
    Eigenvalues are of interest to us only insofar as they coincide with singular values for positive semidefinite matrices.
\end{remark}

QRCP is considerably more expensive than unpivoted QR from a communication standpoint, even with straightforward pivoting strategies.
For example, Householder QR with Businger and Golub's \textit{max-norm pivoting} requires updating column norms of every partial decomposition of $\mtx{M}$ as the algorithm progresses from left to right across the columns \cite{BG:1965:QRCP}.
These column-norm updates entail matrix-vector (BLAS 2) operations,
which are far less suitable for modern hardware than the matrix-matrix (BLAS 3) operations abundant in classic algorithms for unpivoted QR.
The significance of this problem has been known for decades \cite{Quintana:1998}, and has compounded over time.

We use methods common in Randomized Numerical Linear
Algebra~(RandNLA) to develop a fast and reliable QRCP algorithm
we call \textit{CQRRPT},\footnote{pronounced ``see-crypt''} which stands
for ``CholeskyQR with Randomization and Pivoting for Tall matrices.''
As we show below, CQRRPT can outperform not only other QRCP algorithms
but also specialized communication-avoiding algorithms for unpivoted QR, such as shifted CholeskyQR3 \cite{FK2020} and TSQR \cite{demmel2012communication}.
The dominant cost of our algorithm is attributed exclusively to BLAS 3 operations and a random projection, rendering it highly efficient from a communication standpoint.
On distributed memory systems, it can be implemented with just two
sum-reduce exchanges.
Compared to shifted CholeskyQR3, CQRRPT requires $\tfrac{4}{3}\times$
fewer data passes and $\tfrac{3}{2}\times$ fewer inter-processor messages.
Furthermore, compared to TSQR, CQRRPT lends itself more easily to
performance optimization on massively parallel architectures, thanks to its simple reduction operator.
The highly efficient computational profile, together with its numerical stability, 
suggests that CQRRPT essentially ``solves'' the algorithm design problem of QRCP for tall matrices.

\subsection{Where is the randomness?}\label{subsec:where_random}

CQRRPT uses randomness only once: as its very first step, it samples a linear
dimension reduction map from a prescribed distribution.
The dimension reduction map is called a \textit{sketching operator} and is said to be sampled from a \textit{sketching distribution}.
Its next step is to apply the sketching operator to the input matrix to produce a \textit{sketch}.
Computing a sketch can be as simple as selecting rows from a matrix and
as complicated as fast Fourier
transforms.
Our implementation of CQRRPT uses structured sparse sketching operators,
since these can provide exceptional speed without sacrificing reliability of the algorithm.

Analysis of sketching-based algorithms often comes in two parts.
In the first part, the algorithm is analyzed as a purely deterministic method, conditional on an event that the sketch retains suitable geometric information from the input matrix.
The second part uses tools from random matrix theory to bound the probability with which the desirable event happens, where the bound is \textit{unconditional on numerical properties of the input matrix}.
Our analysis of CQRRPT follows this tradition.
Our presentation strongly emphasizes the first phase of analysis, since the probabilistic event that the first phase conditions on is extremely well understood.

We refer the reader to the recent monograph \cite{RandLAPACK_Book} for a broad overview of how randomness can be leveraged in numerical linear algebra.

\subsection{The CholeskyQR method and its variants}
\label{subsec:cholqr}

The idea behind CholeskyQR is simple.
Given the $m \times n$ matrix $\mtx{M}$ with $m \geq n$, compute
the Gram matrix $\mtx{G} = \mtx{M}^{\trans}\mtx{M}$.
If we can
factor $\mtx{G} = \mtx{R}^{*}\mtx{R}$ where $\mtx{R}$ is an invertible upper-triangular matrix, then we can obtain the QR decomposition's orthogonal factor -- hereafter, the ``Q-factor'' -- as
$\mtx{Q} = \mtx{M}\mtx{R}^{-1}$.
Note that this procedure only works if $\mtx{M}$ has rank $n$.

\paragraph{Computational profile.}
Computing $\mtx{G}$ and $\mtx{R}$ in CholeskyQR can be carried out by
the BLAS function \code{SYRK} and the \LAPACK function \code{POTRF},
respectively.
The Q-factor can be formed explicitly with the \BLASlev{3}
function \code{TRSM}.
The \textit{flop count} for CholeskyQR using these three functions is
roughly $2mn^{2} + n^{3}/3$ \cite[Page 120]{LAWN41:1994}, which is close
to the $2mn^2 - (2/3)n^{3}$ flop count of Householder QR computed by
\LAPACK's \code{GEQRF} function \cite[Page 122]{LAWN41:1994} when
$m \geq n$.

Notably, CholeskyQR returns an explicit Q-factor, while \code{GEQRF} outputs an implicit representation of its Q-factor.
The implicit representation is preferable for manipulating the full $m \times m$ orthogonal operator in a ``non-economic'' QR decomposition.
Nevertheless, many common numerical methods, such as subspace projection methods or singular value decomposition of tall matrices, require computing the Q-factor explicitly.
Translating \code{GEQRF}'s implicit factor into an explicit form (using \LAPACK{}'s \code{ORGQR}) takes an additional 
$2mn^2 - (2/3)n^3$~flops~\cite[Page 122]{LAWN41:1994}.
Consequently, when $r := m/n > 1$, CholeskyQR requires a fraction $(6r+1)/(12r+4)$ of the flops of the traditional method of obtaining a fully explicit QR factorization.

\paragraph{Limitations.}
Despite its simplicity and speed, CholeskyQR is rarely used in practice.
One clear shortcoming is that it fails if the computed Gram matrix is numerically rank-deficient, which can happen even if the input matrix is numerically full-rank.
More generally, if $\cond(\mtx{M})$ denotes the condition number of
$\mtx{M}$ in the spectral norm and $\roundoff$ denotes the unit roundoff in working
precision, then rounding errors can lead to significant \textit{orthogonality loss}, in the sense that $\|\mtx{Q}^{\trans}\mtx{Q} - \mtx{I}\|_2$ can be as large as $\mathcal{O}(\roundoff{}\cond(\mtx{M})^2)$.

The orthogonality loss of CholeskyQR can be mitigated by a variety of methods.\footnote{
Indeed, this paper adds CQRRPT to the list of orthogonality-loss mitigation methods, even though CQRRPT is concerned with pivoted rather than unpivoted QR.}
For example, \textit{Jacobi preconditioning} normalizes all columns of $\mtx{M}$ to have unit norm and brings the condition number of $\mtx{M}$ to within a factor $\sqrt{n}$ of the best-possible diagonal preconditioner~\cite[Theorem 3.5]{V1969}; similar results are available for normalizing column blocks via a block-diagonal preconditioner \cite{demmel23}.
Recently, it has been suggested to obtain a preconditioner from an LU decomposition of $\mtx{M}$ \cite{TOO2020} or from CholeskyQR on a regularized version of $\mtx{M}$ \cite{FK2020}.
Another approach to mitigating orthogonality loss involves reorthogonalization (essentially running CholeskyQR twice) resulting in ``CholeskyQR2'' \cite{FNYY2014,yamamoto2015roundoff}.
However, this method can still fail when $\cond(\mtx{M}) > \roundoff{}^{-1/2}$.
Finally, a mixed precision approach computes the Gram matrix in higher precision whereby the problematic conditioning can be better controlled~\cite{yamazaki2014cholqr}.

\subsection{Randomized preconditioning for CholeskyQR}
\label{subsec:rand_cholqr}

The following algorithm serves as the starting point of our work.
Given the matrix $\mtx{M}$, it sketches $\mtx{M}$ and then uses the triangular factor of the sketch's QR decomposition as a preconditioner in CholeskyQR.
Like standard (unpreconditioned) CholeskyQR, it has a hard requirement that $\mtx{M}$ is full rank in exact arithmetic.

\begin{enumerate}
    \item Compute $\sk{\mtx{M}} = \mtx{S}\mtx{M}$ using a sketching operator $\mtx{S} \in \R^{d \times m}$ with $n \leq d \leq m$.
    \item Compute 
      $[\sk{\mtx{Q}}, \sk{\mtx{R}}] = \code{qr}(\sk{\mtx{M}})$ and discard $\sk{\mtx{Q}}$.
    \item Form $\pre{\mtx{M}} = \mtx{M}  (\sk{\mtx{R}})^{-1}$ explicitly, using back substitution.
    \item Compute $\mtx{G} = (\sk{\mtx{M}})^{\trans}(\sk{\mtx{M}})$ and $\pre{\mtx{R}} = \code{chol}(\mtx{G}, \text{``upper''})$.
    \item Set  $\mtx{Q} = \pre{\mtx{M}} (\pre{\mtx{R}})^{-1}$ and $\mtx{R}=\pre{\mtx{R}} \sk{\mtx{R}}$, then return $\mtx{Q}$ and $\mtx{R}$.
\end{enumerate}

This method was first introduced~\cite{FGL:2021:CholeskyQR} without the context of the broader RandNLA literature; it was first studied in detail with the RandNLA literature in mind in \cite{Balabanov:2022:cholQR} and subsequently in \cite{HSBY:2023:rand_chol_qr}.
Notably, the possibility of first computing a sketch-orthonormal Q-factor $\pre{\mtx{M}}$, and subsequently retrieving the
$\ell_2$-orthonormal Q-factor via a CholeskyQR on $\pre{\mtx{M}}$ was
originally proposed
elsewhere~\cite{BG:2021:GramSchmidt,https://doi.org/10.48550/arxiv.2111.14641}.
Furthermore, the idea of using $\sk{\mtx{R}}$ as the preconditioner traces back to the \textit{sketch-and-precondition} paradigm for overdetermined least squares~\cite{RT:2008:SAP,AMT:2010:Blendenpik,MSM:2014:LSRN}.\footnote{In this context, $\pre{\mtx{M}}$ was only formed implicitly and was accessed as a linear operator.}
If a well-studied condition called a \textit{subspace embedding
property} holds between $\mtx{S}$ and
$\range(\mtx{M})$, then $\pre{\mtx{M}}$ will be nearly-orthonormal \cite{DMM06,Sarlos:2006,DMMS07_FastL2_NM10,Mah-mat-rev_BOOK}.

For any reasonable sketching operator, $\mtx{S}$, the flop count of the
algorithm outlined above will be $\mathcal{O}(mn^2)$.
Whether or not practical speedup is observed depends greatly on the
sketching distribution and on the method used to compute the product
$\mtx{S} \mtx{M}$.
If a fast sketching operator is used with $n \leq d \leq m$, then the leading term in the resulting algorithm's flop count will be $3mn^2$, which is only $mn^2$ more than the standard CholeskyQR.

\subsection{Randomization in QR with column pivoting}
\label{subsec:earlier_rand_qrcp}

Substantial efforts have been devoted to the design of randomized algorithms for QRCP of general matrices, i.e., rectangular matrices of any aspect ratio of numbers of rows and columns.
These efforts originate with independent contributions by
Martinsson~\cite{Martinsson:2015:QR} as well as Duersch and
Gu~\cite{DG:2017:QR}, with subsequent extensions by Martinsson
\textit{et al.}~\cite{MOHvdG:2017:QR} and also by Xiao, Gu, and
Langou~\cite{XGL:2017:RandQRCP}.
While there are many variations on these methods, they share a common structure that we outline here.

These methods take in integers $(b,s)$ where $b > 0$, $s \geq 0$, and $n \geq b + s$.
They form a sketch 
$\mtx{Y} = \mtx{S} \mtx{M}$ with $b + s$ rows, and then proceed with a three-step iterative loop.
\begin{enumerate}
    \item[1.] Use any QRCP method to find 
    $P_{\text{block}}$, a length-$b$ vector containing column indices for the first
    $b$ pivots for the wide matrix $\mtx{Y}$.
    \item[2.] Process the tall matrix $\mtx{M}[\fslice{},P_{\text{block}}]$
    by QRCP or unpivoted QR.
    \item[3.] Suitably update $\mtx{M}$ and $\mtx{Y}$, then return to Step 1.
\end{enumerate}
The update to $\mtx{M}$ at Step 3 can be handled by standard methods, such as those in blocked unpivoted Householder QR.
The update to $\mtx{Y}$ is more subtle.
If done appropriately, then the leading term in the algorithm's flop count can match that of Householder QR~\cite{DG:2017:QR}.

A core limitation of these methods is that their best performance is attained with small block sizes.
For example, $b = 64$ and $s = 10$ is used in both shared- and
distributed-memory settings~\cites{MOHvdG:2017:QR,XGL:2017:RandQRCP}.
Much larger block sizes would be needed for the updating operations to
be performed near the limit of machines' peak performance rates.
One of our motivations for developing CQRRPT has been to provide an
avenue to extend earlier randomized QRCP algorithms to block sizes on
the order of thousands.

\subsection{Our contributions and outline}
\label{sxn:our_contributions}

CQRRPT takes the method from \cref{subsec:rand_cholqr} and
replaces the call to a QR function on $\sk{\mtx{M}}$
with a call to a QRCP function on $\sk{\mtx{M}}$.
It uses the results from this call to QRCP to provide: the pivot
indices, an estimate of $\mtx{M}$'s numerical rank, and the information
for our preconditioner in the form of an upper-triangular
matrix~$\sk{\mtx{R}}$.
The numerical rank estimate is needed to deal with the possibility that $\mtx{M}$ might be rank-deficient.

CQRRPT's discovery was reported in two independent
works~\cite{Balabanov:2022:cholQR} and \cite{RandLAPACK_Book}.
This paper is a joint effort by the authors of these works to (1)
conduct thorough theoretical analysis of this algorithm, and (2)
demonstrate experimental results of a high-performance implementation of
the algorithm using \RandBLAS and \RandLAPACK.

\cref{sec:introduce_CQRRPT} formally introduces CQRRPT as \cref{alg:CQRRPT_full}.
Our formalism emphasizes how the effect of randomness on CQRRPT's behavior can be isolated in the choice of distribution used for the sketching operator.
The core operations of CQRRPT are presented in \cref{alg:CQRRPT}, which can be analyzed as a purely deterministic algorithm.
\cref{thm:correctness} establishes the correctness of CQRRPT's output, and \cref{thm:cond_of_A_pre} characterizes the spectrum of the preconditioned input matrix.
The remaining results in the section, \cref{thm:cqrrpt_RRQR,thm:cqrrpt_SRRQR}, concern the quality of the pivots obtained for the input matrix.

\cref{sec:finite_precision} concerns CQRRPT's numerical stability.
Its main result, \cref{cor:simple_stability}, says that CQRRPT can be implemented to provide numerical stability that is \textit{unconditional} on numerical properties of the input matrix.
Specifically, with an appropriate criterion for determining numerical rank, CQRRPT can produce a decomposition with relative error and orthogonality loss on the order of machine precision, as long as the unit roundoff and problem dimensions satisfy $\roundoff \leq m^{-1}F(n)^{-1}$ for some low-degree polynomial $F(n)$.
After outlining the proof of this result we explain how numerical rank selection can be performed in practice.
\cref{app:numerical_stability_proofs} states and proves a more technical version of \cref{cor:simple_stability} in the form of \cref{thm:stabprecond}.

\cref{sec:empirical_pivots} empirically investigates pivot quality.
It shows how easy-to-compute metrics of pivot quality compare when running the \LAPACK default function (\code{GEQP3}) versus when running CQRRPT (based on applying \code{GEQP3} to $\sk{\mtx{M}}$).
The results show the interaction between \textit{coherence} -- which essentially represents a condition number for the act of sampling -- and parameter choices for sparse sketching operators.

\cref{sec:speed_experiments} provides performance experiments with a \RandLAPACK implementation of CQRRPT.
The experiments use a machine with dual 24-core Xeon Gold 6248R CPUs, where Intel MKL and \RandBLAS are the underlying linear algebra libraries.
We first compare CQRRPT with alternative methods for both unpivoted and pivoted QR factorizations.
Our algorithm handily beats competing methods for large matrices, and it has the advantage of a simpler representation of $\mtx{Q}$.
We then investigate how CQRRPT's performance might be improved.
Runtime profiling results which show that computing QRCP of the sketch via \code{GEQP3} can exceed the cost of CholeskyQR several times over.
This motivates an experiment where QRCP on the sketch is handled by HQRRP \cite{MOHvdG:2017:QR} (one of the randomized algorithms for QRCP of general matrices described in \cref{subsec:earlier_rand_qrcp}) instead of~\code{GEQP3}.

Concluding remarks are given in \cref{sec:conclusion}.

\subsection{Definitions and notation}\label{subsec:def_and_notations}

\subsubsection{The mundane}

Matrices appear in boldface sans-serif capital letters.
The transpose of a matrix $\mtx{X}$ is given by $\mtx{X}^{\trans}$, its $\ell_2$ condition number is $\cond(\mtx{X})$, and its elementwise absolute value is $|\mtx{X}|$.
Numerical vectors appear as boldface lowercase letters, while index vectors appear as uppercase letters.
The $k \times k$ identity matrix is denoted by $\mtx{I}_k$.
We enumerate components of matrices and vectors with indices starting from one, rather than starting from zero.
We extract the leading $k$ columns of $\mtx{X}$ by writing $\mtx{X}[\fslice{},\lslice{k}]$, while its trailing $n-k$ columns are extracted by writing $\mtx{X}[\fslice{},\tslice{k+1}{n}]$.
The $(i,j)^{\text{th}}$ entry of $\mtx{X}$ is $\mtx{X}[i,j]$.
Similar conventions apply to extracting the rows of a matrix or components of a vector.

The matrix we ultimately aim to decompose is denoted by $\mtx{M}$ and has dimensions $m \times n$.
We use the letters $k$ and $\ell$ as integer indices between 1 and $n$.
Their precise meaning is determined by context; we are free to redefine them at any time.

\subsubsection{Some necessary evils}

Virtuous notation is readable and intuitive.
Despite our best efforts, we have been unable to devise wholly virtuous notation for this manuscript.
We have settled instead for notation that is readable and \textit{internally consistent} for matrices with similar structures.

\paragraph{Leading columns of matrices subject to pivoting and QR.}
Suppose $\mtx{X}$ is a matrix with $n$ columns that we aim to decompose via QR after column pivoting by $J$ (a permutation vector of $\{1,\ldots,n\}$).
For each $k \in \{1,\ldots,n\}$, we define the truncated, pivoted matrix
\[
    \mtx{X}_k := \mtx{X}[\fslice{},J[\lslice{k}]].
\]
Taking $k = n$ lets us refer to a pivoted matrix without any truncation.

    We use this notation from the outset with the $m \times n$ matrix $\mtx{X} = \mtx{M}$ and the $d \times n$ matrix $\mtx{X} = \sk{\mtx{M}}$.
    \cref{sec:finite_precision} extends this convention to a matrix denoted by ``$\pre{\mtx{M}}$,'' which is decomposed via \textit{unpivoted} QR.

\paragraph{Column-pivoted QR decompositions, and their factors.}

A QR decomposition consists of two conformable matrices: an orthonormal matrix called the \textit{Q-factor} and an upper-trapezoidal matrix called the \textit{R-factor}.
We equip these factors with special indexing rules.
Specifically, the first $k$ columns of a Q-factor ``$\mtx{Q}$'' are denoted by $\mtx{Q}_k$, and the first $k$ rows of an R-factor ``$\mtx{R}$'' are denoted by $\mtx{R}_k$.

    This manuscript features three matrices with important QR factors: a $d \times n$ pivoted matrix $\sk{\mtx{M}}_n$, a matrix $\pre{\mtx{M}}$ of dimensions $m \times k$ ($k \leq n$), and an $m \times n$ pivoted matrix $\mtx{M}_n$.
    The QR factors of these matrices are distinguished from one another with superscripts, or the absence thereof (e.g., $\sk{\mtx{R}}$, $\pre{\mtx{R}}$, and $\mtx{R}$).

\begin{definition}\label{def:qrcp_alg}
Let $\mtx{X}$ denote a rank-$k$ matrix with $n$ columns.
A \emph{column-pivoted QR decomposition} of $\mtx{X}$ consists of QR factors $(\mtx{Q},\mtx{R})$ and a length-$n$ permutation vector $J$ such that the pivoted matrix $\mtx{X}_n = \mtx{X}[\fslice{},J]$ satisfies $\mtx{X}_n = \mtx{Q}_k\mtx{R}_k$.
\end{definition}

\paragraph{Partitions of R-factors.}
We partition R-factors with notation that is similar to the celebrated paper of Gu and Eisenstat \cite{GE:1996}.
Specifically, to any $k \times n$ upper-trapezoidal matrix $\mtx{R}$ and any $\ell \leq k$, we associate the distinguished submatrices
\begin{equation*} 
    \Roo_{\ell} = \mtx{R}[\lslice{\ell},\ \lslice{\ell}], \quad \Rot_{\ell} = \mtx{R}[\lslice{\ell},\ \tslice{\ell+1}n],\quad\text{and}\quad
    \Rtt_{\ell} = \mtx{R}[\tslice{\ell+1}k,\ \tslice{\ell+1}n].
\end{equation*}
For fixed $\ell$, these matrices let us cleanly express $\mtx{R}$ as a $2 \times 2$ block matrix 
\[
    \mtx{R} = \begin{bmatrix}
                    \Roo_{\ell} & \Rot_{\ell} \\
                    \null  & \Rtt_{\ell}
                \end{bmatrix}.
\]
This notation helps in analyzing our algorithm's rank-revealing properties and numerical stability.
We use it with R-factors from the QR decompositions of $\sk{\mtx{M}}_n$, $\pre{\mtx{M}}$, and $\mtx{M}_n$.

\section{Getting to know our algorithm}
\label{sec:introduce_CQRRPT}

\cref{subsec:where_random} mentioned a two-phase approach to analysis of randomized algorithms.
Here we describe CQRRPT in a way that puts this approach front and center.
We do this by delineating between the full algorithm (which includes a random sampling step) and the algorithm's core (which is purely deterministic).

The full algorithm appears below.
Its speed and reliability are affected decisively by the sketching distribution, and it has two parameters that control this distribution.
The higher-level parameter is a \textit{distribution family};
this is an association of matrix dimensions to probability distributions over matrices with those dimensions.
The lower-level parameter, called the \textit{sampling factor}, sets the size of the sketch in proportion to $n$.

\begin{algorithm}[htb]
\small \caption{CholeskyQR with randomization and pivoting for tall matrices}
\label{alg:CQRRPT_full}
\begin{algorithmic}[1]

\vspace{0.15em}
\Statex \textbf{Required inputs.} An $m \times n$ matrix $\mtx{M}$.

\vspace{0.45em}

\Statex \textbf{Optional inputs.}  A distribution family $\mathscr{F}$ and a sampling factor $\gamma  \geq 1$.

\vspace{0.45em}

\Statex \textbf{Outputs.} QR factors of dimensions $m \times k$ and $k \times n$, and a length-$n$ permutation vector.
These comprise a column-pivoted QR decomposition of $\mtx{M}$ in the sense of \cref{def:qrcp_alg} if and only if $\rank(\mtx{S}\mtx{M}) = \rank(\mtx{M})$; see \cref{thm:correctness}.

\vspace{0.35em}

\setstretch{1.25}
\State \textbf{function} $[\mtx{Q},\mtx{R},J] = \code{cqrrpt}(\mtx{M},\gamma,\mathscr{F})$
\vspace{-4pt}
\Indent
    \State If $\mathscr{F}$ is not provided, set it to a default family of sparse sketching distributions. \label{line:operator}
    \State If $\gamma$ is not provided, set $\gamma = 1.25$.
    \State Set $d = \lceil \gamma n \rceil$, and randomly sample a $d \times m$ matrix $\mtx{S}$ from $\mathscr{F}_{d,m}$.
    \State Compute $[\mtx{Q},\mtx{R},J] = \code{cqrrpt\_core}(\mtx{M},\mtx{S})$
    \State \textbf{return}
\EndIndent    
\end{algorithmic}
\end{algorithm}

\FloatBarrier

CQRRPT's core appears in \cref{alg:CQRRPT}. We characterize its behavior in \cref{subsec:determ_analysis,subsec:rank_revealing} using the concept of \textit{restricted singular values}.
The restricted singular values of a $d \times m$ matrix $\mtx{S}$ on a subspace $L \subset \R^m$ are the singular values of $\mtx{S}\mtx{U}$ where $\mtx{U}$ is any orthonormal matrix with range $L$.\footnote{Note that these singular values do not depend on the choice of orthonormal basis.}
The \textit{restricted condition number} of $\mtx{S}$ on $L$, denoted $\cond(\mtx{S}|L)$, is the ratio of its largest to smallest restricted singular values.

\cref{subsec:sketching_in_CQRRPT} explains how the distribution family and sampling factor affect the probabilistic behavior of \cref{alg:CQRRPT_full}.
By the end of this section, it will be clear that standard results from random matrix theory can be used to probabilistically yet rigorously bound the restricted singular values of $\mtx{S}$ on $\range(\mtx{M})$.
This will show that there are many ways of using CQRRPT to achieve different trade-offs between speed and reliability.

\begin{remark}
    The analysis in this section is conducted in exact arithmetic.
    Such analysis may seem strange in the context of a CholeskyQR algorithm, but it plays a valuable role in plotting the course for our finite-precision analysis.
\end{remark}

\subsection{CQRRPT's deterministic core}\label{subsec:determ_analysis}

The algorithm below relies on functions called ``$\code{qrcp}$'' and ``$\code{rank}$.''
For now, we only assume that \code{qrcp} always returns column-pivoted QR decompositions in the sense of \cref{def:qrcp_alg}.
The details of \code{rank} become important when we consider finite-precision computations in \cref{sec:finite_precision}. 
For now, it is just a black-box that computes the exact rank of its input.

\FloatBarrier

\begin{algorithm}[htb]
\small \setstretch{1.2}
\caption{ : \code{cqrrpt\_core} }
\label{alg:CQRRPT}
\begin{algorithmic}[1]
\Statex \textbf{Input:} A matrix $\mtx{M} \in \mathbb{R}^{m \times n}$, and a sketching operator $\mtx{S} \in \R^{d \times m}$ where $n \leq d \leq m$.
\setstretch{1.3}
\State \textbf{function} $\code{cqrrpt\_core}(\mtx{M},\mtx{S})$
\vspace{-4pt}
\Indent
    \State Sketch $\sk{\mtx{M}} = \mtx{S}\mtx{M}$\label{line:form_sk}
    \State Decompose $[\sk{\mtx{Q}}, \sk{\mtx{R}}, J] = \code{qrcp}(\sk{\mtx{M}})$ \label{line:qrcp_sk} 
    \Statex ~~~ \codecomment{\textasciicircum{} we return this $n$-vector $J$ in full, regardless of subsequent steps.}
    \State Determine $k = \code{rank}(\sk{\mtx{R}})$  \label{line:k_def}
    \State Precondition $\pre{\mtx{M}} = \mtx{M}_k(\sk{\Roo}_k)^{-1}$ \label{line:M_pre_def}
    \Statex ~~~ \codecomment{\textasciicircum{} recall our notation that $\mtx{M}_k := \mtx{M}[\fslice{},J[\lslice{k}]]$ and $\sk{\Roo}_{k} := \sk{\mtx{R}}[\lslice{k},\lslice{k}]$}
    \State Compute $\mtx{G} = (\pre{\mtx{M}})^{\trans}(\pre{\mtx{M}})$ \label{line:get_gram}
    \State Decompose $\pre{\mtx{R}} = \code{chol}(\mtx{G})$ \label{line:get_R_pre}
    \State Set $\mtx{Q}_k = (\pre{\mtx{M}})(\pre{\mtx{R}})^{-1}$ \label{line:get_Q_k}
    \State Undo preconditioning $\mtx{R}_k = \pre{\mtx{R}}\sk{\mtx{R}}_k$\label{line:r_def}
    \Statex ~~~ \codecomment{\textasciicircum{} recall our notation that $\sk{\mtx{R}}_k := \sk{\mtx{R}}[\lslice{k},\fslice{}]$.}
    \State \textbf{return} $\mtx{Q}_k$, $\mtx{R}_k$, $J$
\EndIndent    
\end{algorithmic}
\end{algorithm}

\FloatBarrier

There are two pressing questions for \cref{alg:CQRRPT}.
\begin{enumerate}
    \item Under what conditions does it actually obtain a decomposition of $\mtx{M}$?
    \item Just how ``safe'' is its use of CholeskyQR on Lines \ref{line:get_gram} through \ref{line:get_Q_k}?
\end{enumerate}
\cref{sec:finite_precision} answers these questions in finite-precision arithmetic.
Here are simpler answers under the assumption of computation in exact arithmetic.
When interpreting them, it can be informative to use the fact that $\kappa(\mtx{S}|\range(\mtx{M}))$ is finite if and only if $\rank(\mtx{S}\mtx{M}) = \rank(\mtx{M})$.

\begin{theorem}\label{thm:correctness}
If $\kappa(\mtx{S}|\range(\mtx{M}))$ is finite, then $[\mtx{Q}_k, \mtx{R}_k, J] = \normalfont\code{cqrrpt\_core}(\mtx{M},\mtx{S})$ define a column-pivoted QR decomposition of $\mtx{M}$ in the sense of \cref{def:qrcp_alg}.
\end{theorem}
\begin{proof}
    It is clear that $\mtx{Q}_k$ is orthonormal and $\mtx{R}_k$ is upper-trapezoidal. 
    We need to show that if $\rank(\mtx{S}\mtx{M}) = \rank(\mtx{M})$ then $\mtx{\Delta} = \mtx{M}_n - \mtx{Q}_k\mtx{R}_k$ is zero.
    As a step towards this, note the identity $\mtx{Q}_k\mtx{R}_k = \pre{\mtx{M}}\sk{\mtx{R}}_k$, which implies $\range(\mtx{\Delta}) \subset \range(\mtx{M})$.
    Next, use the assumption that \code{qrcp} produces column-pivoted QR decompositions in sense of \cref{def:qrcp_alg} to find
    \begin{align*}
        \mtx{S}\mtx{\Delta} 
            &= \mtx{S}\mtx{M}_n - \mtx{S}\pre{\mtx{M}}\sk{\mtx{R}}_k \\
            &= \sk{\mtx{M}}_n - \sk{\mtx{Q}}_k\sk{\mtx{R}}_k = \mtx{0}.
    \end{align*}
    Since $\rank(\mtx{S}\mtx{M}) = \rank(\mtx{M})$ implies $\ker(\mtx{S})\cap\range(\mtx{M})$ is trivial, we have $\mtx{\Delta} = \mtx{0}$.
\end{proof}

\begin{theorem}\label{thm:cond_of_A_pre}
    Let $\pre{\mtx{M}}$ be as on Line 5 of {\normalfont\cref{alg:CQRRPT}} for inputs $\mtx{M}$ and $\mtx{S}$.
    If $\kappa(\mtx{S}|\range(\mtx{M}))$ is finite, then the singular values of $\pre{\mtx{M}}$ are the inverses of the restricted singular values of $\mtx{S}$ on $\range(\mtx{M})$.
\end{theorem}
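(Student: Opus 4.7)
My strategy is to show that $\mtx{S}\pre{\mtx{M}}$ has orthonormal columns, and then use a change-of-basis argument on $\range(\mtx{M})$ to translate this statement into the claim about restricted singular values.

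First I would unpack the hypothesis. Saying that every restricted singular value of $\mtx{S}$ on $L := \range(\mtx{M})$ is nonzero is equivalent to saying $\mtx{S}$ is injective on $L$, so $\rank(\sk{\mtx{M}}) = \rank(\mtx{S}\mtx{M}) = \rank(\mtx{M})$ and hence $k = \rank(\sk{\mtx{R}}) = \rank(\mtx{M})$. Because QRCP selects its first $k$ pivots so that $\sk{\mtx{M}}_k$ spans the column space of $\sk{\mtx{M}}$, the $k$ columns of $\sk{\mtx{M}}_k = \mtx{S}\mtx{M}_k$ are independent. By injectivity of $\mtx{S}|_L$, the columns of $\mtx{M}_k$ are independent as well, so $\range(\mtx{M}_k) = L$. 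In particular $\sk{\Roo}_k$ is invertible (so Line~5 of \cref{alg:CQRRPT} is well-defined), and $\range(\pre{\mtx{M}}) = \range(\mtx{M}_k) = L$.

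Next I would exploit the QRCP identity $\sk{\mtx{M}}_k = \sk{\mtx{Q}}_k \sk{\Roo}_k$. Combined with $\sk{\mtx{M}}_k = \mtx{S}\mtx{M}_k$, right-multiplying by $(\sk{\Roo}_k)^{-1}$ gives $\mtx{S}\pre{\mtx{M}} = \sk{\mtx{Q}}_k$, which has orthonormal columns. Now fix an orthonormal basis $\mtx{U}\in\R^{m\times k}$ of $L$ and write $\pre{\mtx{M}} = \mtx{U}\mtx{C}$ for some $k\times k$ matrix $\mtx{C}$, which must be invertible since $\pre{\mtx{M}}$ has rank $k$. Because $\pre{\mtx{M}}^{\trans}\pre{\mtx{M}} = \mtx{C}^{\trans}\mtx{C}$, the singular values of $\pre{\mtx{M}}$ are exactly the singular values of $\mtx{C}$.

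Substituting $\pre{\mtx{M}} = \mtx{U}\mtx{C}$ into $(\mtx{S}\pre{\mtx{M}})^{\trans}(\mtx{S}\pre{\mtx{M}}) = \mtx{I}_k$ yields
\[
    (\mtx{S}\mtx{U})^{\trans}(\mtx{S}\mtx{U}) = \mtx{C}^{-\trans}\mtx{C}^{-1},
\]
whose eigenvalues are the squared reciprocals of the singular values of $\mtx{C}$ (by the SVD of $\mtx{C}$). Taking square roots identifies the singular values of $\mtx{S}\mtx{U}$ -- i.e., the restricted singular values of $\mtx{S}$ on $L$, which do not depend on the choice of $\mtx{U}$ -- as the reciprocals of the singular values of $\mtx{C}$, hence of $\pre{\mtx{M}}$. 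The only real obstacle is the bookkeeping in the first paragraph: one must confirm that the rank hypothesis on $\mtx{S}$ forces $k = \rank(\mtx{M})$, $\sk{\Roo}_k$ to be invertible, and $\range(\mtx{M}_k) = \range(\mtx{M})$; once these are established the rest is a short linear-algebra computation.
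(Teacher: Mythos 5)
Your proof is correct, and it rests on the same two key facts as the paper's: that $\mtx{S}\pre{\mtx{M}}$ is column-orthonormal (which you derive explicitly from the QRCP identity $\sk{\mtx{M}}_k = \sk{\mtx{Q}}_k\sk{\Roo}_k$), and that $\pre{\mtx{M}} = \mtx{U}\mtx{C}$ for an invertible $\mtx{C}$ once the restricted singular values are assumed nonzero. Where you diverge is the final step: the paper applies the Courant--Fischer min-max principle to $\|\pre{\mtx{M}}\vct{x}\|_2/\|\mtx{S}\pre{\mtx{M}}\vct{x}\|_2$ and performs the change of variables $\vct{y}=\mtx{T}\vct{x}$ inside the variational characterization, which directly yields the order-reversing correspondence $\sigmaSub{i}{\pre{\mtx{M}}} = \sigmaSub{k-i+1}{\mtx{S}\mtx{U}}^{-1}$; you instead compare Gram matrices, deducing $(\mtx{S}\mtx{U})^{\trans}(\mtx{S}\mtx{U}) = \mtx{C}^{-\trans}\mtx{C}^{-1} = (\mtx{C}\mtx{C}^{\trans})^{-1}$ and reading off the multiset identity from the eigenvalues. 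Your route is more elementary (no variational arguments) and is fully adequate for the theorem as stated, since the claim is only that the two multisets of values are reciprocal; the paper's route additionally makes the index pairing explicit, which is occasionally convenient downstream. Your opening paragraph of bookkeeping (that the rank hypothesis forces $k=\rank(\mtx{M})$, that $\sk{\Roo}_k$ is invertible, and that $\range(\pre{\mtx{M}})=\range(\mtx{M})$) is actually spelled out more carefully than in the paper, which asserts these facts without derivation.
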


\begin{proof}
    Let $\mtx{U}$ be an $m \times k$ orthonormal matrix with the same range as $\mtx{M}$. 
    Define the set 
    $\mathcal{U}_i  = \{ L \subset \R^k \,:\,  L \text{ is a linear subspace and } \dim(L) = i \}$.
    
    Since the singular values of a matrix are the square roots of the eigenvalues of its Gram matrix, and since the square root is monotonic, the classic min-max principle for eigenvalues of Hermitian matrices directly translates to singular values of general matrices.
    This formulation of the min-max principle tells us that
   \[
        \sigmaSub{i}{\pre{\mtx{M}}} = \min_{X \in \mathcal{U}_i} \max_{\vct{x} \in X} \frac{\|\pre{\mtx{M}}\vct{x}\|_2 }{\|\vct{x}\|_2}~~\text{ and }~~\sigmaSub{k-i+1}{\mtx{S}\mtx{U}} = \max_{Y \in \mathcal{U}_i} \min_{\vct{y} \in Y} \frac{\|\mtx{S}\mtx{U}\vct{y}\|_2}{\|\vct{y}\|_2 }.
    \]
    Our goal is to show that $\sigmaSub{i}{\pre{\mtx{M}}} = \left(\sigmaSub{k-i+1}{\mtx{S}\mtx{U}}\right)^{-1}$.
    To do this, we assume the restricted singular values of $\mtx{S}$ on $\range(\mtx{M})$ are all nonzero.
    This ensures that $\range(\pre{\mtx{M}}) = \range(\mtx{M})$ and subsequently that there is an invertible matrix $\mtx{T}$ where $\pre{\mtx{M}} = \mtx{U}\mtx{T}$.
    We also rely on the fact that the $d \times k$ matrix $\mtx{S} \pre{\mtx{M}}$ is orthonormal.
    Combining these observations gives a chain of identities
    \begin{align*}
    \sigmaSub{i}{\pre{\mtx{M}}} &= \min_{X \in \mathcal{U}_i} \max_{\vct{x} \in X} \frac{\|\pre{\mtx{M}}\vct{x}\|_2 }{\|\vct{x}\|_2}
     = 
     \min_{X \in \mathcal{U}_i} \max_{\vct{x} \in X} \frac{\|\pre{\mtx{M}}\vct{x}\|_2 }{\|\mtx{S}\pre{\mtx{M}}\vct{x}\|_2}
     =
     \min_{X \in \mathcal{U}_i} \max_{\vct{x} \in X} \frac{\|\mtx{U}\mtx{T}\vct{x}\|_2 }{\|\mtx{S}\mtx{U}\mtx{T}\vct{x}\|_2}.
     \end{align*}
     Then we apply a change of variables $\vct{y} = \mtx{T}\vct{x}$ to get
     \begin{align*}
        \sigmaSub{i}{\pre{\mtx{M}}} &= \min_{X \in \mathcal{U}_i} \max_{\vct{x} \in X} \frac{\|\mtx{T}\vct{x}\|_2 }{\|\mtx{S}\mtx{U}\mtx{T}\vct{x}\|_2} = \min_{Y \in \mathcal{U}_i} \max_{\vct{y} \in Y} \frac{\|\vct{y}\|_2 }{\|\mtx{S}\mtx{U}\vct{y}\|_2} = \left (\max_{Y \in \mathcal{U}_i} \min_{\vct{y} \in Y} \frac{\|\mtx{S}\mtx{U}\vct{y}\|_2}{\|\vct{y}\|_2 } \right)^{-1}
    \end{align*}
    which completes the proof.
\end{proof}

\paragraph{Arithmetic complexity.}
The arithmetic complexity of \cref{alg:CQRRPT} depends on how we compute and then decompose $\sk{\mtx{M}}$.
Since there are many practical ways to handle the first of these operations, we shall simply say its cost in flops is $C_{\sktext}$.
There are also many ways one might perform the second operation, but the most practical is to use the \LAPACK function \code{GEQP3}.
With this choice, the algorithm's flop count is
\begin{equation}\label{eq:cqrrpt_flop_count}
    2mk^{2} + mk(k + 1) + 4dnk - 2k^{2}(d + n) + 5k^{3}/3 + C_{\sktext}
\end{equation}
plus lower-order terms; see \cref{app: flop} for a derivation of this fact.

If we plug $k = n$ into \eqref{eq:cqrrpt_flop_count} and assume $d,n \in o(m)$, we see that the leading-order term in CQRRPT's flop count is $3mn^2 + C_{\sktext}$.
This compares favorably to the $4mn^2$ flops required to compute a Householder QR decomposition and then explicitly restore the Q-factor with \LAPACK's \code{ORGQR}.
What's more, in certain applications it can suffice to represent $\mtx{Q}$ as a composition of two elementary operators, $\pre{\mtx{M}}$ and $(\pre{\mtx{R}})^{-1}$.
This comes with no sacrifices to the numerical stability (as will be clear from our stability analysis) yet it decreases the leading term in CQRRPT's arithmetic complexity to $2mn^2 + C_{\sktext}$.
 
\paragraph{Communication cost.}
CQRRPT lends itself well to distributed computation.
Here it is prudent to choose the sketching distribution that allows for evaluation of Line \ref{line:form_sk} with the smallest possible value of $d$ while retaining good statistical properties. This can be achieved in theory and practice with Gaussian sketching (see \cref{subsubsec:oblivious_subspace_embeddings}) and  $\gamma = d/n \in [1.25, 5]$; see \cite[\S A.1.1]{RandLAPACK_Book}.

Consider a popular setting where $\mtx{M}$ is distributed block row-wise across $p \leq m / d$ processors, each of which has at least $2dn + n^2$ words of memory.
Here we implement Line \ref{line:form_sk} with local multiplications of blocks of $\mtx{M}$ with the corresponding blocks of columns of $\mtx{S}$ (using $dn$ words of memory) and summing the contributions with an all-reduce operation (using another $dn$ words of memory).
From there, each processor performs QRCP on $\sk{\mtx{M}}$, recovers $\sk{\mtx{A}}_k$ from $\sk{\mtx{R}}$, and applies its inverse to the corresponding local block of $\mtx{M}$.
When using the classical binomial tree version of all-reduce, the overall computation of $\pre{\mtx{M}}$ requires one global synchronization, $2 \log_2 p$ messages, $2dn \log_2 p$ communication volume, and two data passes.
Adding the cost of the CholeskyQR step~\cite{nguyen2015reproducible},
we get the total cost of CQRRPT: two global synchronizations, $4 \log_2 p$ messages, $(2dn+n^2) \log_2 p$ communication volume, and three data passes.
Notably, if we used the recursive-halving version of all-reduce~\cite{thakur2005optimization}, the communication volume can be improved to a mere $2dn+n^2$.

\subsection{How \code{cqrrpt\_core} inherits rank-revealing properties}
\label{subsec:rank_revealing}

Here we explain how CQRRPT inherits pivot quality properties from its underlying \code{qrcp} function.
To describe these properties, we speak in terms of a matrix $\mtx{X}$ consisting of $n$ columns and at least as many rows,\footnote{Which we might take as $\mtx{X} = \mtx{M}$ or $\mtx{X} = \sk{\mtx{M}}$, depending on context.} along with its decomposition $[\mtx{Q},\mtx{R},J] = \code{qrcp}(\mtx{X})$.
We set $k := \rank(\mtx{X})$, and for any $\ell \leq k$ we use  $(\Roo_{\ell},\Rot_{\ell},\Rtt_{\ell})$ to denote submatrices of $\mtx{R}$ using the conventions established in \cref{subsec:def_and_notations}.

We first consider the \textit{rank revealing QR} (RRQR) property.
This concerns how well the spectrum of $\Roo_{\ell}$ approximates the leading singular values of $\mtx{X}$, and how well the spectrum of $\Rtt_{\ell}$ approximates the trailing singular values of $\mtx{X}$.
When $\mtx{X}$ is fixed, we can describe the approximation quality by a sequence of coefficients $f_1,\ldots,f_k$, all at least unity.
Formally, \code{qrcp} has the \emph{RRQR property} for $\mtx{X}$ with coefficients $(f_\ell)_{\ell=1}^{k}$ 
if, for all $\ell \leq k$, we have
\begin{subequations}
\begin{equation}
\label{eq:RRQR_R11}
        \sigmaSub{j}{\Roo_{\ell}} \geq \frac{\sigmaSub{j}{\mtx{X}}}{f_{\ell}} \quad\text{ for all }\quad j \leq \ell,
\end{equation}
and
\begin{equation}
\label{eq:RRQR_R22}
     \sigmaSub{j}{\Rtt_{\ell}} \leq  f_{\ell} \, \sigmaSub{\ell+j}{\mtx{X}} \quad\text{ for all }\quad j \leq k - \ell.
\end{equation}
\end{subequations}

\begin{theorem}
\label{thm:cqrrpt_RRQR}
        Let $c = \kappa(\mtx{S}|\range(\mtx{M}))$. If {\normalfont\code{qrcp}} satisfies the RRQR property for $\mtx{S}\mtx{M}$ with coefficients $(f_\ell)_{\ell=1}^k$, then {\normalfont$\code{cqrrpt\_core}(\cdot,\mtx{S})$} satisfies the RRQR property for $\mtx{M}$ with coefficients $(c f_{\ell})_{\ell=1}^k$.
\end{theorem}

It is common to ask that a QRCP algorithm admit a \textit{function} $f$ where it ensures the RRQR property with coefficients $(f(\ell,n))_{\ell=1}^k$ for any rank-$k$ matrix with $n$ columns.
This is a significant request.
Indeed, QRCP with the max-norm pivot rule does not satisfy the resulting requirements for \textit{any} function $f$, 
as can be seen by taking a limit of Kahan matrices of fixed dimension \cite[Example 1]{GE:1996}.
Still, there are several QRCP algorithms that can ensure the RRQR property, particularly where $f(\ell,n)$ is bounded by a low-degree polynomial in $\ell$ and $n$.
\cref{thm:cqrrpt_RRQR} shows that if CQRRPT uses such an algorithm, then it will satisfy a nearly identical RRQR property.

Next, we consider a \textit{strong RRQR} property.
This concerns our ability to use $\mtx{R}$ to find a well-conditioned basis for an approximate null space of $\mtx{X}_n$.
The particular basis is the columns of the block matrix $\mtx{Y} = [\Roo_{\ell}^{-1}\Rot_{\ell}; -\mtx{I}]$, which satisfies $\|\mtx{X}_n\mtx{Y}\| = \|\Rtt_{\ell}\|$ in every unitarily invariant norm.
For our purposes, we say that \code{qrcp} satisfies the \emph{strong RRQR property} for $\mtx{X}$ with coefficients $(f_{\ell})_{\ell=1}^k$ and $(g_{\ell})_{\ell=1}^k$ when the former coefficients satisfy \eqref{eq:RRQR_R11}-\eqref{eq:RRQR_R22} and the latter coefficients satisfy
\begin{equation}\label{eq:RRQR_R12}
    \| \Roo_{\ell}^{-1}\Rot_{\ell}\|_2 \leq g_{\ell}
\end{equation}
for all $\ell \leq k$.

\begin{theorem}\label{thm:cqrrpt_SRRQR}
    Let $c = \kappa(\mtx{S}|\range(\mtx{M}))$. If {\normalfont\code{qrcp}} satisfies the strong RRQR property for $\mtx{S}\mtx{M}$ with coefficients $(f_{\ell})_{\ell=1}^k$ and $(g_{\ell})_{\ell=1}^k$, then {\normalfont$\code{cqrrpt\_core}(\cdot,\mtx{S})$} satisfies the strong RRQR property for $\mtx{M}$ with coefficients $(c f_{\ell})_{\ell=1}^k$ and $(g_{\ell} + c f_{\ell}^2)_{\ell=1}^k$.
\end{theorem}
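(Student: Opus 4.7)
The plan is to reuse \cref{thm:cqrrpt_RRQR} for the first factor set and concentrate the new work on the inequality $\|\Roo_\ell^{-1}\Rot_\ell\|_2 \le g_\ell + \kappa f_\ell^2$. Since a strong RRQR hypothesis subsumes the ordinary RRQR property, \cref{thm:cqrrpt_RRQR} immediately supplies $(\kappa f_\ell)_{\ell=1}^k$ as governing factors for the singular values of $\Roo_\ell$ and $\Rtt_\ell$ in the decomposition produced by $\code{cqrrpt\_core}(\cdot,\mtx{S})$. Thus only the $g$-type bound remains.

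I would exploit the factored form $\mtx{R}_k = \pre{\mtx{R}}\sk{\mtx{R}}_k$ from Line~7 of \cref{alg:CQRRPT}. Partitioning both triangular factors at index $\ell$ and multiplying out yields $\Roo_\ell = \pre{\Roo}_\ell\sk{\Roo}_\ell$, $\Rtt_\ell = \pre{\Rtt}_\ell\sk{\Rtt}_\ell$, and $\Rot_\ell = \pre{\Roo}_\ell\sk{\Rot}_\ell + \pre{\Rot}_\ell\sk{\Rtt}_\ell$, whence
\[
\Roo_\ell^{-1}\Rot_\ell \;=\; \sk{\Roo}_\ell^{-1}\sk{\Rot}_\ell \;+\; \sk{\Roo}_\ell^{-1}\bigl(\pre{\Roo}_\ell^{-1}\pre{\Rot}_\ell\bigr)\sk{\Rtt}_\ell.
\]
The triangle inequality splits the target into two pieces. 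The first is at most $g_\ell$ by the strong RRQR hypothesis applied to $\sk{\mtx{M}}$. For the second, the ordinary RRQR part of the hypothesis gives $\|\sk{\Roo}_\ell^{-1}\|_2 \le f_\ell/\sigma_\ell(\sk{\mtx{M}})$ and $\|\sk{\Rtt}_\ell\|_2 \le f_\ell\,\sigma_{\ell+1}(\sk{\mtx{M}})$, so monotonicity of singular values makes their product at most $f_\ell^2$.

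The remaining factor $\|\pre{\Roo}_\ell^{-1}\pre{\Rot}_\ell\|_2$ is what I expect to be the main obstacle, since this quantity is not a submatrix of $\pre{\mtx{R}}$ and so does not admit a trivial submultiplicative bound. I plan to control it with a ``null-space-style'' argument: the block matrix $\mtx{Y} = \begin{bmatrix}\pre{\Roo}_\ell^{-1}\pre{\Rot}_\ell \\ -\mtx{I}_{k-\ell}\end{bmatrix}$ satisfies $\pre{\mtx{R}}\,\mtx{Y} = \begin{bmatrix}\mtx{0} \\ -\pre{\Rtt}_\ell\end{bmatrix}$, and since $\pre{\Rtt}_\ell$ is a submatrix of $\pre{\mtx{R}}$ one obtains
\[
\|\pre{\Roo}_\ell^{-1}\pre{\Rot}_\ell\|_2 \;\le\; \|\mtx{Y}\|_2 \;\le\; \frac{\|\pre{\mtx{R}}\,\mtx{Y}\|_2}{\sigma_{\min}(\pre{\mtx{R}})} \;\le\; \frac{\|\pre{\mtx{R}}\|_2}{\sigma_{\min}(\pre{\mtx{R}})} \;=\; \cond(\pre{\mtx{M}}),
\]
and \cref{thm:cond_of_A_pre} identifies $\cond(\pre{\mtx{M}})$ with the restricted condition number $\kappa$. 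Multiplying the three estimates produces the bound $\kappa f_\ell^2$ on the second summand, and hence $\|\Roo_\ell^{-1}\Rot_\ell\|_2 \le g_\ell + \kappa f_\ell^2$, as required.
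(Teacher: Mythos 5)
Your proposal is correct and follows essentially the same route as the paper: the identical block factorization $\Roo_\ell = \pre{\Roo}_\ell\sk{\Roo}_\ell$, $\Rot_\ell = \pre{\Roo}_\ell\sk{\Rot}_\ell + \pre{\Rot}_\ell\sk{\Rtt}_\ell$, $\Rtt_\ell = \pre{\Rtt}_\ell\sk{\Rtt}_\ell$, the same triangle-inequality split, and the same bounds $\sigmaMin{\sk{\Roo}_\ell}^{-1}\|\sk{\Rtt}_\ell\|_2 \le f_\ell^2$ and $\cond(\pre{\mtx{R}}) = \cond(\pre{\mtx{M}}) = \kappa$. The only cosmetic difference is that the paper controls $\|\pre{\Roo}_\ell^{-1}\pre{\Rot}_\ell\|_2$ by direct submultiplicativity together with $\sigmaMin{\pre{\Roo}_\ell} \ge \sigmaMin{\pre{\mtx{R}}}$ and $\|\pre{\Rot}_\ell\|_2 \le \|\pre{\mtx{R}}\|_2$, whereas you derive the same $\cond(\pre{\mtx{R}})$ bound via the null-space block matrix $\mtx{Y}$.
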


As with RRQR, we can ask that a QRCP algorithm be associated with \textit{functions} $f$ and $g$ for which $(f(\ell,n))_{\ell=1}^k$ and $(g(\ell,n))_{\ell=1}^k$ provide strong RRQR coefficients for any rank-$k$ matrix with $n$ columns.
The first algorithm that could ensure this was introduced by Gu and Eisenstat \cite{GE:1996}.\footnote{The Gu-Eisenstat algorithm actually bounds $(\Roo_{\ell})^{-1}\Rot_{\ell}$ \textit{elementwise}. The elementwise bounds readily imply the spectral-norm bounds that we require.}
Setting the tuning parameter of their algorithm to two leads to strong RRQR coefficients
\[
f(\ell,n) = \sqrt{1 + 4 \ell(n-\ell)} \quad\text{and}\quad g(\ell,n) = 2 \sqrt{\ell (n-\ell)},
\]
with a runtime of $\mathcal{O}(d n^2\log n)$ for a $d \times n$ matrix with $d \geq n$.
\cref{thm:cqrrpt_SRRQR} shows that if CQRRPT uses this algorithm for QRCP on $\mtx{S}\mtx{M}$, then it will enjoy an analogous strong RRQR property with some increase to $g$.

\subsection{Probabilistic aspects of CQRRPT}\label{subsec:sketching_in_CQRRPT}

From the perspective of \cref{alg:CQRRPT_full}, the sketching operator $\mtx{S}$ is sampled at random from a probability distribution, and so anything that $\mtx{S}$ affects in \cref{alg:CQRRPT} becomes a random variable with some induced distribution.
% This includes $\pre{\mtx{M}}$ and the condition number of $\pre{\mtx{M}}$.
In particular, the probabilistic behavior of \cref{alg:CQRRPT_full} is determined by the induced distribution for the condition number of $\pre{\mtx{M}}$.
Therefore to understand CQRRPT's behavior, we must explore the following question.
\begin{quote}
    How can we bound the probability that $\cond(\pre{\mtx{M}})$ stays within a prescribed limit, \textit{no matter the matrix $\mtx{M}$}?
\end{quote}
The answer to this question depends greatly on the distribution family $\mathscr{F}$ and the sampling factor $\gamma$ used in \cref{alg:CQRRPT_full}.
Here we give a range of answers based on different choices for these values.

\subsubsection{Sketching distribution families: examples and intuition}

For a fixed distribution family $\mathscr{F}$, we use ``$\mathscr{F}_{d,m}$'' for the distribution in $\mathscr{F}$ over $d \times m$ matrices.
Here are the structures of samples from $\mathscr{F}_{d,m}$ for three prominent distribution families.

\begin{itemize}
    \item \textit{Gaussian matrices}. The entries of $\mtx{S}$ are iid Gaussian random variables with mean zero and variance $1/d$.
    \item \textit{SASOs} (short-axis-sparse operators).
    The columns of $\mtx{S}$ are independent.
    Each column has exactly $s$ nonzeros (for a tuning parameter $s$) whose locations are chosen uniformly at random and whose values are chosen to be $\pm 1/\sqrt{s}$ with equal probability.\footnote{For the etymology of these sketching operators, see~\cite{RandLAPACK_Book}.}
    In practice, it is common to keep $\ell$ between one and eight, even when $d$ is on the order of tens of thousands.
    \item \textit{SRHTs} (subsampled randomized Hadamard transforms). These are ordinarily only defined when $w = \log_2 m$ is an integer and are extended to general $m$ by zero-padding the input.
    To absorb the zero-padding into the SRHT, define a $2^{\lceil w\rceil} \times m$ matrix $\mtx{D}$ whose upper $m \times m$ block is a diagonal matrix populated by independent Rademacher random variables and whose lower block is all zeros.
    An SRHT is then a composition of three operators:
    $\mtx{S} = \sqrt{m / d}\cdot \mtx{\Pi}[\lslice{d},\fslice{}]\mtx{H}\mtx{D}$, 
    where $\mtx{H}$ is a Hadamard transform of order $2^{\lceil w \rceil}$ and $\mtx{\Pi}$ is a random permutation matrix.
\end{itemize}

\noindent These families share key properties.
First, if $\mtx{S}$ is a random matrix sampled from $\mathscr{F}_{d,m}$, then its expected covariance matrix satisfies $\E[\mtx{S}^{\trans}\mtx{S}] = \mtx{I}_m$.
Second, if $\mtx{U}$ is an $m \times k$ orthonormal matrix and $d / k$ is sufficiently large, then the sketched Gram matrix $ \mtx{U}^{\trans}\mtx{S}^{\trans}\mtx{S}\mtx{U}$ concentrates strongly around $\E[\mtx{U}^{\trans}\mtx{S}^{\trans}\mtx{S}\mtx{U}] = \mtx{I}_k$.
To rephrase this second property: $\mtx{S}\mtx{U}$ should concentrate strongly around the real Stiefel manifold $M_{d, k}$ of $d \times k$ orthonormal matrices.

To see the usefulness of these properties, consider \cref{thm:cond_of_A_pre}.
If $\range(\mtx{U}) = \range(\mtx{M})$ and $\mtx{S}\mtx{U}$ has rank-$k$, then the singular values of $\pre{\mtx{M}}$ will equal those of the Moore-Penrose pseudo-inverse of $\mtx{S}\mtx{U}$.
Therefore the desirable property of $\pre{\mtx{M}}$ concentrating
around $M_{m, k}$ is equivalent to $\mtx{S}\mtx{U}$ concentrating around $M_{d, k}$.

\subsubsection{Oblivious subspace embeddings}
\label{subsubsec:oblivious_subspace_embeddings}

Let $\mtx{U}$ be a matrix whose columns are an orthonormal basis for a linear subspace $L \subset \R^m$, and let $\mtx{S}$ be a $d \times m$ matrix.

\begin{definition}\label{def:subspace_embedding}
We call $\mtx{S}$ a \emph{subspace embedding for $L$} with \emph{distortion} $\distortion \in [0, 1]$ if
\begin{equation}\label{eq:subspace_embedding_condition}
   1-\distortion \leq \sigmaMin{\mtx{S}\mtx{U}}^2 \quad\text{and}\quad \sigmaMax{\mtx{S}\mtx{U}}^2 \leq 1 + \distortion.
\end{equation}
Such a matrix is also called a \emph{$\distortion$-embedding} for $L$.
\end{definition}

\noindent
Note that we always have $\kappa(\mtx{S}|L) \leq \sqrt{(1+\distortion)/(1-\distortion)}$ when $\mtx{S}$ is a $\distortion$-embedding for $L$.
There is a long history of using subspace embeddings in randomized algorithms for least squares problems \cite{Sarlos:2006,DMM06,DMMS07_FastL2_NM10}, including as a tool for finding preconditioners to solve least squares problems to higher accuracy \cite{RT:2008:SAP,AMT:2010:Blendenpik,MSM:2014:LSRN}.

Theory is available on how to choose $d$ so that a sample from
$\mathscr{F}_{d, m}$ will be a $\distortion$-embedding for any fixed linear subspace of a given dimension with high probability.
We can use this theory to select $d$ in practice for very well-behaved distribution families.
For example, here is a representative result for Gaussians.

\begin{remark}
    We state the following results with linear subspaces of dimension ``$n$,'' since the dimension of $\range(\mtx{M})$ is never larger than $n$.
\end{remark}

\begin{theorem}
\label{thrm:gaussian_embedding}
    Fix an $n$-dimensional linear subspace $L \subset \R^m$, along with some $\distortion \in (0,1)$ and $\tau > 0$. If $\mtx{S}$ is a $d \times m$ Gaussian operator with
    \[
        \frac{d}{n} \geq \left(\frac{1 + \tau}{(1+\delta)^{1/2} - 1}\right)^2,
    \]
    then it will be a $\distortion$-embedding for $L$ with probability at least $1 - 2 \exp(-n\tau^2/2)$.
\end{theorem}
\begin{proof}
By rotational invariance of the Gaussian distribution, we can take $L = \range(\mtx{U})$ for $\mtx{U} = \mtx{I}_m[\fslice{},\lslice{n}]$.
The claim then follows from \cite[Theorem 8.4]{MT:2020}.
To see how, set $\theta = (1+\delta)^{1/2} - 1$.
If $d$ is chosen in the way indicated above, then setting $t := \tau \theta/ (1+\tau)$ in the statement of \cite[Theorem 8.4]{MT:2020} ensures that $1-\theta \leq \sigma_{\min}(\mtx{S}\mtx{U})$ and $\sigma_{\max}(\mtx{S}\mtx{U}) \leq 1+\theta$ hold with probability at least $1-2\exp(-n \tau^2 /2)$.
From there, simply note that $1-\delta \leq (1-\theta)^2$ and $(1+\theta)^2 = 1+\delta$ to find that $1 -\delta \leq \sigma_{i}(\mtx{S}\mtx{U})^2 \leq 1+\delta$ for all $i$.
\end{proof}
We give results for SASOs and SRHTs below.
These are often of interest since they are algorithmically more
attractive than Gaussian matrices.
However, the available bounds are quite pessimistic (they suggest that one take $\gamma$ proportional to $\log n$, but taking $\gamma = 1.25$ suffices for practical purposes when $n$ is large).

\begin{theorem}\cite[Theorem 4.2]{Cohen:2016:SJLTs}
\label{thrm:saso_embedding}
    Fix an $n$-dimensional linear subspace $L \subset \R^m$, and any $B > 2$, $t \geq 1$, and $\distortion < 1/2$.  % p = B^{-t}.
    There are absolute constants $c_1, c_2$ where, upon taking
    \[
    \frac{d}{n} \geq c_1 t (B \log B) \frac{\log n}{\distortion^2} \quad \text{ and }\quad s \geq c_2 t (\log B)\frac{\log n}{\distortion},
    \]
    sampling a $d \times m$ SASO with $s$ nonzeros per column provides a $\distortion$-embedding for $L$ with probability at least $1 - B^{-t}$.
\end{theorem}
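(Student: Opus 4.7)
My plan is to reduce the claim to an operator-norm concentration bound for the centered sample Gram matrix and then control that norm via the matrix moment method tailored to sparse Rademacher chaoses. Fix any orthonormal basis $\mtx{U} \in \R^{m\times n}$ for $L$. The subspace-embedding condition \eqref{eq:subspace_embedding_condition} is equivalent to $\|\mtx{U}^{\trans}\mtx{S}^{\trans}\mtx{S}\mtx{U} - \mtx{I}_n\|_2 \leq 2\delta - \delta^2$, and column-independence of $\mtx{S}$ together with the prescribed sign distribution ensures $\E[\mtx{U}^{\trans}\mtx{S}^{\trans}\mtx{S}\mtx{U}] = \mtx{I}_n$. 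It therefore suffices to show that $\|\mtx{E}\|_2 \leq \delta$ with failure probability at most $B^{-t}$, where $\mtx{E} := \mtx{U}^{\trans}\mtx{S}^{\trans}\mtx{S}\mtx{U} - \mtx{I}_n$.

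To establish this bound, I would invoke the $2p$-th moment method with $p$ on the order of $t \log B$: Markov's inequality gives $\Pr(\|\mtx{E}\|_2 > \delta) \leq \delta^{-2p}\,\E[\operatorname{tr}(\mtx{E}^{2p})]$, and the task becomes controlling this expected trace. Each entry of $\mtx{E}$ is a degree-$2$ polynomial in the independent column-wise hash indices and Rademacher signs that define $\mtx{S}$, so I would decouple the quadratic form and then apply hypercontractivity for sparse Rademacher chaoses (in the spirit of the sparse-OSNAP analyses developed for SJLTs) to bound the high moments. Matching the target failure probability $B^{-t}$ to this moment bound then dictates how large $d$ and $s$ must be chosen, with the polynomial degree $p$ acting as the free tuning parameter.

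The main obstacle is obtaining the tight joint dependence on $(n, d, s, \delta)$ claimed in the statement. A black-box Matrix Bernstein bound, applied to $\mtx{E}$ viewed as a sum of $d$ independent row-contributions, would force $s \cdot d$ to scale substantially worse than what is asserted. The resolution is to exploit that each row-contribution is itself a sparse quadratic form in the column signs, so that hypercontractivity for degree-$2$ chaoses gives much sharper moment control than matrix Khintchine alone; this is what permits simultaneously $d = \widetilde{O}(n \log n/\delta^2)$ and $s = \widetilde{O}(\log n/\delta)$. The parameters $B$ and $t$ enter only through the tuning of $p$, producing the $(B\log B)$ and $(\log B)$ prefactors on $d$ and $s$ in the final lower bounds.
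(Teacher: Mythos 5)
The paper does not prove this statement at all: it is quoted verbatim as Theorem 4.2 of the cited reference \cite{Cohen:2016:SJLTs}, so there is no in-paper argument to compare against. Judged on its own terms, your proposal correctly identifies the standard reduction and the strategy actually used in that literature: pass to $\mtx{E} = \mtx{U}^{\trans}\mtx{S}^{\trans}\mtx{S}\mtx{U} - \mtx{I}_n$ (one small quibble: $\|\mtx{E}\|_2 \leq 2\delta - \delta^2$ is \emph{sufficient} for \eqref{eq:subspace_embedding_condition}, not equivalent to it, though this costs you nothing), and then bound $\Pr(\|\mtx{E}\|_2 > \delta)$ via $\delta^{-2p}\E[\operatorname{tr}(\mtx{E}^{2p})]$ with $p \asymp t\log B$. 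You also correctly diagnose why matrix Bernstein is inadequate here.

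The gap is that the entire technical content of the theorem is the bound on $\E[\operatorname{tr}(\mtx{E}^{2p})]$ as a joint function of $d$, $s$, $n$, and $p$, and your proposal replaces it with the phrase ``decouple and apply hypercontractivity for sparse Rademacher chaoses.'' That does not work as stated: hypercontractivity applied entrywise to the degree-$2$ chaos $\mtx{E}[i,j]$, followed by a union bound or a naive trace expansion, loses polynomial factors in $n$ and does not produce the claimed tradeoff $d \gtrsim n\log n/\delta^2$ together with $s \gtrsim \log n/\delta$. The actual argument (Nelson--Nguyen's OSNAP analysis, sharpened in \cite{Cohen:2016:SJLTs}) expands $\operatorname{tr}(\mtx{E}^{2p})$ globally into a sum over closed walks indexed by the hash locations and signs of the columns of $\mtx{S}$, and then performs a delicate graph/path-counting argument in which the collision structure of the $s$ nonzeros per column is what generates the competing powers of $d$ and $s$; the stated constants $c_1, c_2$ and the $(B\log B)$ versus $(\log B)$ asymmetry between $d$ and $s$ fall out of optimizing that count over $p$. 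Without carrying out (or at least precisely invoking) that moment computation, the proposal is a proof plan rather than a proof: the step ``this is what permits simultaneously $d = \widetilde{O}(n\log n/\delta^2)$ and $s = \widetilde{O}(\log n/\delta)$'' is exactly the claim to be established.
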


\begin{theorem}\cite[Proposition 3.9]{balabanov2019randomized}\label{thrm:srht_embedding}
    Let $L \subset \R^m$ be a linear subspace of dimension $n$. Let $\distortion, p \in (0,1)$.
    If $\mtx{S}$ is a $d \times m$ SRHT with $d \leq m$ and 
    \[
        \frac{d}{n} \geq 2(\delta^2 - \delta^3/3)^{-1} \left(1 + \sqrt{\frac{8 \log( 6m/p)}{n}}\right)^2 \log (3n/p),
    \]
    then it is a $\distortion$-embedding for $L$ with probability at least $1 - p$.
\end{theorem}
We note that the analysis in \cite[Proposition 3.9]{balabanov2019randomized}, cited above, builds on more fundamental results from~\cite{Tropp:2011,BG:2013}.

\section{Numerical stability}
\label{sec:finite_precision}

This section analyzes \code{cqrrpt\_core} (\cref{alg:CQRRPT}) under the assumption that all algebraic operations in it and in its subroutines are performed in finite precision arithmetic.
Suppose for concreteness that we have
\begin{equation}\label{eq:def_Qk_Rk_J_numerical}
     [\mtx{Q}_k,\mtx{R}_k,J] = \code{cqrrpt\_core}(\mtx{M},\mtx{S})  ,
\end{equation}
where $\mtx{Q}_k$ is $m \times k$ and $\mtx{R}_k$ is $k \times n$ and upper-triangular.
In these terms, our analysis is concerned with bounding the \textit{reconstruction error} $\|\mtx{M}_n - \mtx{Q}_k \mtx{R}_k\|_{\mathrm{F}} / \|\mtx{M}\|_{\mathrm{F}}$ and the \textit{orthogonality loss} $\|\mtx{Q}_k^{\trans}\mtx{Q}_k - \mtx{I}_k \|_2$.

Proving bounds on these quantities requires assumptions on the
\code{qrcp} and \code{rank} subroutines in \cref{alg:CQRRPT}.
So far we have said very little about these points because they are ultimately design questions that have no single answer.
In this section, our goal is to show that \code{qrcp} and \code{rank} can be implemented so that upon conditioning on $\mtx{S}$ being a $\distortion$-embedding for $\range(\mtx{M})$ (for some $\distortion \leq 1/2$), the reconstruction error and orthogonality loss
are bounded above by low-degree polynomials in $(m, n)$.
Such a result will directly imply that CQRRPT's properties introduced in \cref{sec:introduce_CQRRPT} and proven in  \cref{sec:analysis_exact} are preserved under finite precision arithmetic. 

\paragraph{New notation.}
\phantomsection{}
\label{page:numerical_stability_notation}

For purposes of exposition in this section only, we adopt notation where scalars $x$ and $y$ are said
to satisfy $x \lesssim y$ if $x \leq c  y + G(n) \roundoff$, where $c$
is a constant close to $1$, $G(n)$ is a low-degree in the small matrix dimension $n$,
and $\roundoff$ is a unit roundoff.\footnote{ The polynomials we use will never exceed degree four, and are typically degree one or two.}

Up until now the $m \times k$ and $k \times k$ matrices $\pre{\mtx{M}}$ and $\pre{\mtx{R}}$ have been referred to without notational dependence on $k$.
Moving forward, we use $\pre{\mtx{M}}_k$ and $\pre{\mtx{R}}_k$ for these matrices.
We can select submatrices from them using notation consistent with \cref{subsec:def_and_notations}.
For example, we can speak of a parameter $\ell < k$ and use $\pre{\mtx{M}}_{\ell}$ in reference to the first $\ell$ columns of $\pre{\mtx{M}}_k$.
Similarly, we can use $\pre{\Roo}_\ell$ to denote the leading $\ell \times \ell$ submatrix of $\pre{\mtx{R}}_k$.

\subsection{Summary}
\subsubsection{The challenge}

We begin by emphasizing that since we want to decide how \code{rank} should be implemented, the value $k = \rank(\sk{\mtx{M}})$ on Line \ref{line:k_def}
of \code{cqrrpt\_core} is really something we \textit{choose}.
   
That established, let us see how $k$ affects reconstruction error and orthogonality loss.
In the tradition of adding zero and applying the triangle inequality, one can obtain the following bound on reconstruction error committed in the preconditioning step:
\begin{align}
    \small \norm{\mtx{M}_n- \pre{\mtx{M}}\sk{\mtx{R}}_k }_\mathrm{F} \leq  \underbrace{\norm{\mtx{M}_n - \mtx{M}_k(\sk{\Roo}_k)^{-1}\sk{\mtx{R}}_k}_\mathrm{F}}_{\text{truncation error}} \, +\, \norm{(\mtx{M}_k - \pre{\mtx{M}}\sk{\Roo}_k) (\sk{\Roo}_k)^{-1} \sk{\mtx{R}}_k }_\mathrm{F}    .
    \label{eq:reconstruction_error_bound}
\end{align}
Of the two terms in this upper bound, the \textit{truncation error} is more opaque.
To better understand it, suppose $\distortion$ is the distortion of $\mtx{S}$ for the range of $\mtx{M}$.
We claim that under mild assumptions for the accuracy of operations on Lines \ref{line:form_sk} and \ref{line:qrcp_sk}, and an assumption that $\sk{\mtx{A}}_k$ is not too ill-conditioned, one can bound
\begin{equation}\label{eq:trunc_error_bound}
   \small {\norm{ \mtx{M}_n - \mtx{M}_k(\sk{\Roo}_k)^{-1}\sk{\mtx{R}}_k}_\mathrm{F}}/{\norm{\mtx{M}}_\mathrm{F}} \lesssim \sqrt{1+\distortion}\norm{\sk{\mtx{C}}_k}_\mathrm{F}/{\norm{\sk{\mtx{M}}}_\mathrm{F}}.
\end{equation}

Since $\|\sk{\mtx{C}}_k\|_\mathrm{F}$ is decreasing with $k$, this bound suggests that $k$ should be large to keep reconstruction error under control.

Unfortunately, if $\mtx{M}$ is ill-conditioned, then choosing $k$ too large can pose severe problems.
The basic reason for this stems from the fact that $\cond(\sk{\Roo}_n)$ can be as large as $\cond(\mtx{S}{\mtx{M}})$, which in turn can be as large as $\sqrt{\frac{1+\distortion}{1-\distortion}}\cond(\mtx{M})$.
This suggests that $\sk{\Roo}_k$ can be ill-conditioned when $k$ is large, even when $\distortion$ is small.
This matters since larger condition numbers for $\sk{\Roo}_k$ risk larger rounding errors at the step when we form the $m \times k$ matrix $\pre{\mtx{M}}$.
If this step is performed inaccurately, then $\cond(\pre{\mtx{M}})$ might be far from one even if $\distortion$ is small, which risks orthogonality loss in $\mtx{Q}_k$ that cannot be controlled by our choice of sketching distribution for \cref{alg:CQRRPT_full}.

\subsubsection{A high-level result}

The tension described above raises an important question.
Can we be certain that there \textit{exists} a truncation index $k$ so that both reconstruction error and orthogonality loss are kept near machine precision?
The following result says the answer is yes, provided the QRCP of $\sk{\mtx{M}}$ is (sufficiently) strongly rank-revealing.

\begin{theorem}[Simplified version of~\cref{thm:stabprecond} from~\cref{app:numerical_stability_proofs}]\label{cor:simple_stability}
	Consider~\cref{alg:CQRRPT} where Line \ref{line:M_pre_def} is executed with unit roundoff $\roundoff$, and other lines are executed with unit roundoff $\tilde{\roundoff}$. 
	Assume that the $\normalfont\code{qrcp}$ subroutine at Line 3 produces $\sk{\mtx{Q}}$ and $\sk{\mtx{R}}$ by a pivoted Householder QR process or pivoted Givens QR process.
    Additionally, assume its pivots satisfy the strong rank-revealing properties \eqref{eq:RRQR_R11}, \eqref{eq:RRQR_R22}, and \eqref{eq:RRQR_R12} with coefficients $f_\ell, g_\ell \leq 2 \sqrt{n \ell}$ when $\mtx{X} = \sk{\mtx{M}}$.
	Finally, suppose $\mtx{S}$ is a $\distortion$-embedding for $\range(\mtx{M})$ with $\distortion \leq 1/2$. 
	
	There exist low-degree polynomials $G_1,\ldots,G_5$ (that have no dependence on $\mtx{M}$ or $\mtx{S}$) and a truncation index $k$ such that if $\tilde{\roundoff} \leq  m^{-1} G_1(n,d)^{-1} \roundoff $ and $\roundoff \leq G_2(n)^{-1}$, then
	\begin{equation} \label{eq:simple_stability1}
	\small \norm{\mtx{M}_n - \pre{\fmtx{M}} \sk{\fmtx{R}}_k}_\mathrm{F} \leq G_3(n) \roundoff \norm{\mtx{M} }_\mathrm{F}
	\text{~~~and~~~}  \cond{(\pre{\fmtx{M}})} \leq 1.8.
	\end{equation}		
	Furthermore, as a direct consequence of \cref{eq:simple_stability1}, we have
	\begin{equation} \label{eq:simple_stability2}
	\small \norm{\mtx{M}_n - \fmtx{Q}_k \fmtx{R}_k}_\mathrm{F} \leq   G_4(n) \roundoff \norm{\mtx{M} }_\mathrm{F}
	\text{~~~and~~~}	 
	\norm{\fmtx{Q}_k^{\trans}\fmtx{Q}_k - \mtx{I}}_2 \leq  G_5(n) \roundoff.
	\end{equation}
\end{theorem}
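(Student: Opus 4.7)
The plan is to follow the division built into the statement: first produce an index $k$ that simultaneously yields the reconstruction bound in \eqref{eq:simple_stability1} and the preconditioner conditioning bound $\cond(\pre{\fmtx{M}}) \leq 1.8$, and then derive \eqref{eq:simple_stability2} as a short corollary using a black-box CholeskyQR stability statement applied to the now-well-conditioned $\pre{\fmtx{M}}$. The strategy for choosing $k$ is dictated by the tension described in the ``challenge'' paragraph: $k$ should be the largest index for which the strong RRQR lower bound $\sigma_{\min}(\sk{\Roo}_k) \geq \sigma_k(\sk{\mtx{M}})/f_k$ keeps $\cond(\sk{\Roo}_k)$ below a threshold of the form $1/(m G_1(n)\roundoff{})$. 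Because $f_k \leq 2\sqrt{nk}$, this is a concrete polynomial condition on the spectrum of $\sk{\mtx{M}}$, so such a maximal $k$ exists; by maximality, the complementary RRQR bound $\|\sk{\Rtt}_k\|\leq f_k\,\sigma_{k+1}(\sk{\mtx{M}})$ simultaneously forces the trailing block to be small.

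For the reconstruction bound I would split as in \eqref{eq:reconstruction_error_bound}. The truncation piece is the argument that produced \eqref{eq:trunc_error_bound}: applying $\mtx{S}$ to the residual collapses it to $\sk{\mtx{Q}}[\fslice{},\tslice{k+1}{}]\,\sk{\Rtt}_k$, whose norm is controlled by the RRQR trailing bound and the maximal choice of $k$; the subspace-embedding property then transfers this estimate from $\mtx{S}$-space back to $\mtx{M}$-space with a factor of at most $(1-\delta)^{-1}\leq 4/3$, since the residual lies in $\range(\mtx{M})$. The second term of \eqref{eq:reconstruction_error_bound} is controlled by the column-wise backward error of TRSM, which gives $\pre{\fmtx{M}}\,\sk{\Roo}_k = \mtx{M}_k + \mtx{E}_1$ with $\|\mtx{E}_1\| = \mathcal{O}(n\roundoff{}\|\pre{\fmtx{M}}\|\|\sk{\Roo}_k\|)$; multiplied by $(\sk{\Roo}_k)^{-1}\sk{\mtx{R}}_k = [\mtx{I},\ (\sk{\Roo}_k)^{-1}\sk{\Rot}_k]$, whose norm is at most $\sqrt{1+g_k^2}$ by \eqref{eq:RRQR_R12}, it fits comfortably under $G_3(n)\roundoff{}\|\mtx{M}\|$ once the threshold in the choice of $k$ is set.

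For the condition-number bound on $\pre{\fmtx{M}}$, the exact-arithmetic identity $\mtx{S}\,\pre{\fmtx{M}} = \sk{\mtx{M}}_k(\sk{\Roo}_k)^{-1} = \sk{\mtx{Q}}_k$ combined with $\range(\mtx{M}_k)\subseteq\range(\mtx{M})$ and \cref{thm:cond_of_A_pre} yields $\cond(\pre{\fmtx{M}})\leq (1+\delta)/(1-\delta)\leq 5/3$ at $\delta=1/4$. The perturbation introduced by the finite-precision TRSM affects singular values by at most $\mathcal{O}(\cond(\sk{\Roo}_k)\roundoff{})$, which by construction of $k$ is as small as we wish; picking $G_1(n)$ with enough slack forces this perturbation into the gap between $5/3$ and $1.8$. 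Once $\cond(\pre{\fmtx{M}})\leq 1.8$ is established, \eqref{eq:simple_stability2} follows by invoking any standard CholeskyQR roundoff analysis (e.g.\ along the lines of~\cite{yamamoto2015roundoff}) at precision $\tilde{\roundoff{}}$, producing $\|\fmtx{Q}_k^{\trans}\fmtx{Q}_k-\mtx{I}\|=\mathcal{O}(\tilde{\roundoff{}}\,\cond(\pre{\fmtx{M}})^2)=\mathcal{O}(\roundoff{})$ and $\pre{\fmtx{M}}=\fmtx{Q}_k\pre{\fmtx{R}}_k+\mtx{E}_2$ with $\|\mtx{E}_2\|=\mathcal{O}(\tilde{\roundoff{}}\|\pre{\fmtx{M}}\|)$; post-multiplying by $\sk{\fmtx{R}}_k$ and combining with \eqref{eq:simple_stability1} through the triangle inequality yields the two bounds in \eqref{eq:simple_stability2}.

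The main obstacle is bookkeeping: every stage (Householder/Givens backward error in producing $\sk{\mtx{R}}$, TRSM forward and backward error, the RRQR factors $f_k,g_k$, and the CholeskyQR analysis) contributes a polynomial in $n$ and possibly a factor of $m$, and these must be consolidated into the five polynomials $G_1,\ldots,G_5$ while preserving the delicate inequality $(1+\delta)/(1-\delta)+\text{perturbation}\leq 1.8$ at $\delta=1/4$. A secondary subtlety is the role of the assumption that the QRCP on $\sk{\mtx{M}}$ arises from a Householder or Givens process: this is precisely what allows the backward error of the first three lines of \cref{alg:CQRRPT} to be absorbed into a \emph{structured} perturbation of $\sk{\mtx{M}}$ itself (and hence of $\mtx{M}$ via the sketching map), rather than an unstructured perturbation of $\sk{\mtx{R}}$ alone, so that the embedding argument in the truncation estimate goes through unchanged.
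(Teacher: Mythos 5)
Your argument has the same architecture as the paper's proof: a threshold-based choice of the truncation index tied to the strong RRQR factors; a split of the reconstruction error into a truncation term (transferred from sketch space back to $\mtx{M}$-space via the $\delta$-embedding, with $g_k$ controlling the amplification by $(\sk{\Roo}_k)^{-1}\sk{\Rot}_k$) plus a TRSM backward-error term; a bound $\cond(\pre{\fmtx{M}}) \leq (1+\delta)/(1-\delta) + \text{perturbation} \leq 1.8$; and a black-box CholeskyQR roundoff lemma to pass from \eqref{eq:simple_stability1} to \eqref{eq:simple_stability2}, which is exactly the role of \cref{thm:stabchol}. The only structural differences are minor: the paper selects $k$ by the trailing-block criterion \eqref{eq:tau0} on $\|\sk{\Rtt}_k\|/\|\sk{\mtx{R}}\|$ and then \emph{deduces} $\cond(\sk{\Roo}_k) \lesssim \roundoff{}^{-1}$ from the RRQR bounds, whereas you threshold $\cond(\sk{\Roo}_k)$ directly and recover the trailing-block bound by maximality (equivalent up to polynomials in $n$ under the assumed factors $f_\ell, g_\ell \leq 2\sqrt{n\ell}$); and the paper delegates the leading-column reconstruction and the conditioning of $\pre{\fmtx{M}}$ to \cite[Theorem 5.2]{Balabanov:2022:cholQR} (restated as \cref{thm:RCholQR}) rather than rederiving them from the row-wise TRSM backward error as you do.

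One quantitative slip should be fixed. Your threshold $\cond(\sk{\Roo}_k) \leq 1/(m\,G_1(n)\roundoff{})$ must not involve $m$ and should not be tied to $G_1$: by your own maximality argument it yields $\|\sk{\Rtt}_k\| \lesssim f_k f_{k+1}\, m\, G_1(n)\,\roundoff{}\,\|\sk{\mtx{R}}\|$, which would make $G_3$ depend on $m$, contradicting the statement. The correct form is a threshold $1/(G_0(n)\roundoff{})$ with $G_0$ a polynomial in $n$ (and $k \leq n$) alone --- the paper uses $1000\, n^{3/2} k^{5/2} \roundoff{} \leq \|\sk{\fmtx{C}}_{k-1}\|_{\mathrm{F}}/\|\sk{\fmtx{R}}\|_2$ --- with enough slack that the TRSM perturbation of order $k\roundoff{}\,\cond(\sk{\Roo}_k)$ to the singular values of $\pre{\fmtx{M}}$ fits in the gap between $5/3$ and $1.8$. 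The factor $m^{-1}$ belongs only in the hypothesis $\tilde{\roundoff{}} \leq m^{-1} G_1(n)^{-1}\roundoff{}$ governing the sketching and QRCP lines, which is what keeps the perturbations $\mtx{S}\mtx{M} - \sk{\mtx{M}}$ and $\sk{\mtx{M}} - \sk{\mtx{Q}}\sk{\mtx{R}}$ harmless after multiplication by $(\sk{\Roo}_k)^{-1}$.
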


We have chosen a computational model with two unit roundoffs to highlight an important property of the preconditioner -- that the dominant Line \ref{line:M_pre_def} can be performed with a unit roundoff not constrained by $m$.
This property may have significant implications for multi- and low-precision arithmetic architectures.
Clearly, the result can also be used in the computational model with a single roundoff.

We note that \cref{cor:simple_stability} is an \textit{existential} result for a suitable truncation rank $k$.
Our proof of this result includes a method for how such $k$ can be identified efficiently, under the stated assumptions on the unit roundoffs.
However, in practice, the unit roundoff is usually a constant that does not change with the matrix dimensions.
This creates a need for a separate and more practical way to choose the truncation rank, which is a topic we address in \cref{subsec:practical_rank_est}.

\subsection{Proof sketch for  \cref{cor:simple_stability}}\label{subsec:stability_cor_proof_overview}

The following lemma, which we prove in \cref{subapp:analysis:stabchol} by straightforward methods, shows that \cref{eq:simple_stability2} in~\cref{cor:simple_stability} directly follows from~\cref{eq:simple_stability1}.

\begin{lemma}\label{thm:stabchol}
	Consider Lines 5 to 7 of~\cref{alg:CQRRPT} where the computations are performed with unit roundoff $\tilde{\roundoff} \leq 0.00022 m^{-1}n^{-1}$. If $\pre{\fmtx{M}}_k$ and $\sk{\trun{\fmtx{R}}}_k$ satisfy 
    \begin{equation} \label{eq:CholQRprop0}
        \small \norm{\mtx{M}_n - \pre{\fmtx{M}}_k \sk{\trun{\fmtx{R}}}_k}_2 \leq 0.01 \norm{\mtx{M}}_2 \text{ and } \cond(\pre{\fmtx{M}}_k) \leq 6,
    \end{equation}
    then the output factors $\fmtx{Q}_k$ and $\fmtx{R}_k$ satisfy
	\begin{flalign}
     &\small \norm{\mtx{M}_n- \fmtx{Q}_k \fmtx{R}_k}_\mathrm{F} \leq \norm{\mtx{M}_n - \pre{\fmtx{M}}_k \sk{\trun{\fmtx{R}}}_k}_\mathrm{F} + 60 n^2 \tilde{\roundoff} \norm{\mtx{M}}_2,   \label{eq:CholQRprop1} \\
 & \small \norm{\fmtx{Q}_k^{\trans}\fmtx{Q}_k - \mtx{I}}_2 \leq 180 m n \tilde{\roundoff}. \label{eq:CholQRprop2}
	\end{flalign}
\end{lemma}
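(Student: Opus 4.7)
The plan is to exploit the well-conditioning hypothesis $\cond(\pre{\fmtx{M}}_k) \leq 6$ to apply off-the-shelf CholeskyQR error bounds to line 6, and then combine them with a standard backward error analysis for the triangular matrix product at line 7. The orthogonality bound \eqref{eq:CholQRprop2} is essentially an immediate consequence of classical CholeskyQR roundoff analyses (e.g.~\cite{yamamoto2015roundoff}), which yield $\|\fmtx{Q}_k^{\trans} \fmtx{Q}_k - \mtx{I}\|_2 \lesssim \tilde{\roundoff{}}\, m\, n\, \cond(\pre{\fmtx{M}}_k)^2$ whenever $\tilde{\roundoff{}}\, n^2 \cond(\pre{\fmtx{M}}_k)^2$ is bounded away from one; the cond bound absorbs into the constant $180$, and the roundoff budget $\tilde{\roundoff{}} \leq 0.00022\, m^{-1} n^{-1}$ is comfortably strong enough for the applicability condition.

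For the reconstruction bound \eqref{eq:CholQRprop1}, I would introduce the residuals $\fmtx{E}_1 := \pre{\fmtx{M}}_k - \fmtx{Q}_k \pre{\fmtx{R}}_k$ and $\fmtx{E}_2 := \fmtx{R}_k - \pre{\fmtx{R}}_k \sk{\trun{\fmtx{R}}}_k$ arising at lines 6 and 7, and exploit the algebraic identity
\begin{equation*}
\mtx{M}_n - \fmtx{Q}_k \fmtx{R}_k \;=\; \bigl(\mtx{M}_n - \pre{\fmtx{M}}_k \sk{\trun{\fmtx{R}}}_k\bigr) \;+\; \fmtx{E}_1 \sk{\trun{\fmtx{R}}}_k \;-\; \fmtx{Q}_k \fmtx{E}_2 .
\end{equation*}
The first summand passes through unchanged to the first term on the right of \eqref{eq:CholQRprop1}, so it suffices to bound the Frobenius norms of $\fmtx{E}_1 \sk{\trun{\fmtx{R}}}_k$ and $\fmtx{Q}_k \fmtx{E}_2$ by $O(n^2 \tilde{\roundoff{}}\|\mtx{M}\|_2)$ each.

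The key observation driving these last two bounds is that the identity $\sk{\trun{\fmtx{R}}}_k = (\pre{\fmtx{M}}_k)^{\dagger}\,(\pre{\fmtx{M}}_k \sk{\trun{\fmtx{R}}}_k)$, together with the cond bound and the hypothesis on $\|\mtx{M}_n - \pre{\fmtx{M}}_k \sk{\trun{\fmtx{R}}}_k\|_2$, yields the paired estimate
\begin{equation*}
\|\pre{\fmtx{M}}_k\|_2\, \|\sk{\trun{\fmtx{R}}}_k\|_2 \;\leq\; \cond(\pre{\fmtx{M}}_k)\, \|\pre{\fmtx{M}}_k \sk{\trun{\fmtx{R}}}_k\|_2 \;\leq\; 6 \cdot 1.01\,\|\mtx{M}\|_2 .
\end{equation*}
Standard residual bounds then give $\|\fmtx{E}_1\|_F \lesssim \tilde{\roundoff{}}\, n^{3/2}\,\|\pre{\fmtx{M}}_k\|_2$ from the triangular-solve step of CholeskyQR (using $\|\fmtx{Q}_k\|_F \lesssim \sqrt{n}$ via the orthogonality bound just established) and $\|\fmtx{E}_2\|_F \lesssim \tilde{\roundoff{}}\, n^{3/2}\,\|\pre{\fmtx{R}}_k\|_2\,\|\sk{\trun{\fmtx{R}}}_k\|_2$ from matrix-product rounding. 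Since the near-orthonormality of $\fmtx{Q}_k$ also forces $\|\pre{\fmtx{R}}_k\|_2$ to be at most a constant multiple of $\|\pre{\fmtx{M}}_k\|_2$, substituting the paired estimate into both residual bounds delivers the required $O(n^2 \tilde{\roundoff{}}\|\mtx{M}\|_2)$ scaling.

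The main obstacle is explicit-constant bookkeeping tight enough to land on the stated numerical values $60$ and $180$ under the permitted roundoff budget. In particular, the $O(\sqrt{mn\tilde{\roundoff{}}})$-sized deviation of $\fmtx{Q}_k$ from a strict column-orthonormal matrix must not be allowed to inflate the bound on $\|\pre{\fmtx{R}}_k\|_2$ relative to $\|\pre{\fmtx{M}}_k\|_2$, and the cited CholeskyQR residual analysis must be invoked with its explicit constants rather than hidden ones. The roundoff hypothesis $\tilde{\roundoff{}} \leq 0.00022\, m^{-1} n^{-1}$ is precisely what ensures that all higher-order perturbative terms absorb cleanly into the stated leading constants.
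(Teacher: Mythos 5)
Your proposal follows essentially the same route as the paper's proof: the identical splitting $\mtx{M}_n - \fmtx{Q}_k\fmtx{R}_k = (\mtx{M}_n - \pre{\fmtx{M}}_k\sk{\trun{\fmtx{R}}}_k) + \mtx{E}_1\sk{\trun{\fmtx{R}}}_k - \fmtx{Q}_k\mtx{E}_2$, the same appeal to the CholeskyQR roundoff analysis of \cite{yamamoto2015roundoff} for both the orthogonality bound and $\|\mtx{E}_1\|$, and the same key paired estimate $\|\pre{\fmtx{M}}_k\|_2\|\sk{\trun{\fmtx{R}}}_k\|_2 \leq 6\,\sigma_{\min}(\pre{\fmtx{M}}_k)\|\sk{\trun{\fmtx{R}}}_k\|_2 \leq 6.06\|\mtx{M}\|_2$ derived from the two hypotheses. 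The only remaining work is the explicit constant bookkeeping you already identify, which the paper carries out exactly as you outline.
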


Taking \cref{thm:stabchol} as given, the question becomes how to choose $k$ so that \cref{eq:simple_stability1} holds.
We call this the \textit{preconditioner stability question}.
In brief, the main idea is to choose $k$ so that
\begin{equation}\label{eq:tau0}
\small
{\|\sk{\Rtt}_{k} \|_\mathrm{F}}/ {\|\sk{\mtx{R}} \|_2} \leq G_0(n) \roundoff \leq {\|\sk{\Rtt}_{k-1} \|_\mathrm{F}}/ {\|\sk{\mtx{R}} \|_2},
\end{equation}
for a low-degree polynomial $G_0(n)$ that does not depend on $\mtx{M}$ or $\mtx{S}$.
The specific polynomial $G_0(n)$ sufficient to ensure~\cref{eq:simple_stability1} is given in \cref{thm:stabprecond}, which is stated and proven in  \cref{subsubsec:thm:stabprecond}.
In what follows, we outline the key steps in the argument for proving that theorem, using the same notation as in our formal proofs.

\paragraph{Truncation ensures full numerical rank.}

Choosing $k$ to satisfy \eqref{eq:tau0} ensures that $\sk{\fmtx{A}}_k$ is numerically full-rank.
Assuming $\sk{\fmtx{R}}$ comes from a column-pivoted QR decomposition of $\sk{\mtx{M}}$ with the theorem's stated strong RRQR properties, such $k$ provides for the existence of low-degree polynomials $G_6$ and $G_7$ where
\begin{equation*}
\small \cond(\sk{\fmtx{A}}_k) = \frac{\|\sk{\fmtx{A}}_k \|_2}{\sigmaMin{\sk{\fmtx{A}}_k}} 
\leq \frac{\|\sk{\fmtx{R}} \|_2}{\|\sk{\fmtx{C}}_{k-1}\|_\mathrm{F}} G_6(n) \leq G_7(n) \roundoff{}^{-1}.
\end{equation*}
This bound can be used to show that $\mtx{M}_k$ is also numerically full-rank, in that 
\begin{equation} \label{eq:simple_stability3}
\small \cond( \mtx{M}_k) \leq \sqrt{\frac{1+\distortion}{1-\distortion}} \cond(\mtx{S} \mtx{M}_k) \lesssim \cond(\sk{\fmtx{A}}_k) \leq G_7(n) \roundoff{}^{-1}.
\end{equation}
The first inequality in \eqref{eq:simple_stability3} holds due to the $\distortion$-embedding property of $\mtx{S}$.
The second inequality can be shown through three steps: $\cond(\sk{\fmtx{A}}_k) \gtrsim \cond(\sk{\fmtx{Q}}_k \sk{\fmtx{A}}_k) \gtrsim \cond(\sk{\fmtx{M}}_k) \gtrsim \cond(\mtx{S} \mtx{M}_k)$, employing standard rounding bounds for matrix operations, with the fact that the left-hand-side matrix is numerically full rank (in steps two and three).

\paragraph{Preconditioning and reconstruction error for leading columns.}

The next step in the proof is to look at $\pre{\fmtx{M}}_k \sk{\fmtx{A}}_k$ as an unpivoted ``sketched CholeskyQR'' decomposition of $\mtx{M}_k$, in the sense of \cite{Balabanov:2022:cholQR}.
According to \cite[Theorem 5.2]{Balabanov:2022:cholQR}, if $\cond( \mtx{M}_k) \lesssim  G_7(n) \roundoff{}^{-1}$, then there is a low-degree polynomial $G_8$ for which
\begin{equation} \label{eq:uncholQR}
	\small\|\mtx{M}_k - \pre{\fmtx{M}}_k \sk{\fmtx{A}}_k \|_\mathrm{F} \leq  G_8(n) \roundoff \|\mtx{M}_k \|_\mathrm{F}
	\text{~~~and~~~}
	\cond(\pre{\fmtx{M}}_k) \lesssim \sqrt{{\frac{1+\distortion}{1-\distortion}}}.
\end{equation}
This establishes the condition number bound needed in \eqref{eq:simple_stability1}.
It also controls reconstruction error for the first $k$ columns of $\mtx{M}_n$.

\paragraph{Reconstruction error for trailing columns.}

To establish \eqref{eq:simple_stability1}, it remains to bound the reconstruction error of the trailing $n - k$ columns of $\mtx{M}$.
Toward this end, we adopt the notation that 
\[
    \small \xoverline{\mtx{M}}_k := \mtx{M}[\fslice,J[\tslice{k+1}n]] \quad\text{and}\quad \sk{\xoverline{\mtx{M}}}_k:= \sk{\mtx{M}}[\fslice,J[\tslice{k+1}n]].
\]
By using the reconstruction error bound for $\mtx{M}_k$ from~\cref{eq:uncholQR}, the $\distortion$-embedding property, standard bounds for matrix operations, and the reconstruction error bound for $\sk{\mtx{M}}_k$, one finds that
\begin{equation} \label{eq:simple_stability5}	
\begin{split}
    \small \frac{\|\xoverline{\mtx{M}}_k - \pre{\fmtx{M}}_k \sk{\fmtx{B}}_k \|_\mathrm{F}}{\|\fmtx{M}\|_\mathrm{F}} &\lesssim \frac{\|\xoverline{\mtx{M}}_k - \fmtx{M}_k (\sk{\fmtx{A}}_k)^{-1} \sk{\fmtx{B}}_k \|_\mathrm{F}}{{\|\fmtx{M}\|_\mathrm{F}}} \leq 
    \sqrt{\frac{1+\distortion}{1-\distortion}} \frac{\|\mtx{S} \xoverline{\mtx{M}}_k - \mtx{S} \fmtx{M}_k (\sk{\fmtx{A}}_k)^{-1} \sk{\fmtx{B}}_k \|_\mathrm{F}}{\| \mtx{S} {\fmtx{M}}\|_\mathrm{F}} \\
    &\lesssim \sqrt{\frac{1+\distortion}{1-\distortion}} \frac{\|\sk{\xoverline{\mtx{M}}}_k - \sk{\fmtx{M}}_k (\sk{\fmtx{A}}_k)^{-1} \sk{\fmtx{B}}_k \|_\mathrm{F}}{\|\sk{\fmtx{M}}\|_\mathrm{F}}  \lesssim \sqrt{\frac{1+\distortion}{1-\distortion}} \frac{\|\sk{\xoverline{\mtx{M}}}_k - \sk{\fmtx{Q}}_k \sk{\fmtx{B}}_k \|_\mathrm{F}}{\|\sk{\fmtx{M}}\|_\mathrm{F}}.
\end{split}
\end{equation}
Notably, the strong RRQR property of $\fmtx{R}_k$ plays a key role in~\cref{eq:simple_stability5}.
It ensures that the terms of the form $\|\mtx{E} (\sk{\fmtx{A}}_k)^{-1} \sk{\fmtx{B}}_k\|_\mathrm{F}$, for some error matrices such as $\mtx{E} =\mtx{M}_k - \pre{\fmtx{M}}_k \sk{\fmtx{A}}_k$, are bounded by $g_k\|\mtx{E}\|_\mathrm{F}$, where $g_k \leq 2\sqrt{kn}$.
	
We combine \cref{eq:simple_stability5} with the stability of the \code{qrcp} routine and the criterion \cref{eq:tau0} used for selecting the truncation index $k$ to get
\begin{equation}\label{eq:simple_stability4}
   \small \frac{\|\xoverline{\mtx{M}}_k - \pre{\fmtx{M}}_k \sk{\fmtx{B}}_k \|_\mathrm{F}}{\|{\mtx{M}}\|_\mathrm{F}} \lesssim \sqrt{\frac{1+\distortion}{1-\distortion}} \frac{\|\sk{\xoverline{\mtx{M}}}_k - \sk{\fmtx{Q}}_k \sk{\fmtx{B}}_k \|_\mathrm{F}}{\|\sk{\fmtx{M}}\|_\mathrm{F}} \lesssim \sqrt{\frac{1+\distortion}{1-\distortion}} \frac{\|\sk{\fmtx{C}}_k  \|_\mathrm{F}}{\|\sk{\fmtx{R}}\|_\mathrm{F}} \leq  G_{9}(n) \roundoff.
\end{equation}	
This fulfills our need for a reconstruction error bound for the trailing columns of $\mtx{M}$.

\subsection{Determining numerical rank in practice} 
\label{subsec:practical_rank_est}

We suggest a flexible two-stage approach to numerical rank estimation.
The idea is that since orthogonality loss increases with truncation rank and reconstruction error (typically) decreases with truncation rank, it is reasonable to choose the largest rank where some estimate for the orthogonality loss is below a specified tolerance (say, $\orthtol = 100 \roundoff$).

Our first stage begins by finding a crude upper bound on the numerical rank of $\sk{\mtx{R}}$.
For example, one can set
\begin{subequations}
\begin{equation}\label{eq:initial_numerical_rank}
    k_o = \min\{ \ell \,:\, \|\sk{\mtx{C}}_{\ell}\|_{\mathrm{F}} \leq \roundoff s\} ,
\end{equation}
where $s$ is the maximum entry of $|\sk{\mtx{R}}|$.
This bound is cheap to compute and ensures $\|\sk{\mtx{C}}_{k_o}\|_2 \leq  \roundoff \|\sk{\mtx{R}} \|_2$.
The rest of the first stage consists of forming the preconditioned matrix $\pre{\mtx{M}}_{k_o} = (\mtx{M}_{k_o})(\sk{\mtx{A}}_{k_o})^{-1}$ and computing the Cholesky decomposition of $(\pre{\mtx{M}}_{k_o})^{\trans}(\pre{\mtx{M}}_{k_o})$.

Our second stage uses a function for estimating condition numbers of triangular matrices.
Given such a function, $\code{cond}$, we estimate the orthogonality loss of choosing truncation rank $\ell$ by $\roundoff{}\cdot \left(\code{cond}(\pre{\Roo}_{\ell})\right)^2$.
The motivation for this is that if $\code{cond}$ bounds condition numbers from above, then choosing $\ell$ to satisfy $\code{cond}(\pre{\Roo}_{\ell}) \leq \sqrt{\orthtol /\roundoff}$ bounds orthogonality loss by $\mathcal{O}(\orthtol)$.
Therefore the formal goal of the second stage is to find 
\begin{equation}\label{eq:updated_numerical_rank}
    k = \max\{ \ell \,:\,  \code{cond}(\pre{\Roo}_{\ell}) \leq \sqrt{\orthtol /\roundoff}\, \}.
\end{equation}
When computing \eqref{eq:updated_numerical_rank} we can assume that $\code{cond}(\pre{\Roo}_{\ell+1}) \geq \code{cond}(\pre{\Roo}_{\ell})$.
This assumption holds for the true condition number, as can be seen by applying the eigenvalue interlacing theorem to the Gram matrices of $\pre{\Roo}_{\ell+1}$ and $\pre{\Roo}_{\ell}$ (the latter being a submatrix of the former).
This assumption is useful because it lets us compute \eqref{eq:updated_numerical_rank} by binary search over $\ell$ in $\{1,\ldots,k_o\}$.
Even if this assumption does not hold, the only risk in deciding $k$ by binary search is underestimating numerical rank.
\end{subequations}

One can bound the condition number of a triangular matrix $\mtx{X}$ in $\mathcal{O}(n^2 \log n)$ time by applying Krylov subspace methods to $\|\mtx{X}\|_2$ and $\|\mtx{X}^{-1}\|_2$.
A similar approach with half the complexity could be used to estimate $\tau \geq  \|\mtx{I} - \mtx{X}\|_2$, which could be turned around to bound $\cond(\mtx{X}) \leq (1+\tau)/(1-\tau)$ if $\tau < 1$.
If we relax the requirement that $\code{cond}$ always upper-bounds condition numbers, then we can take $\code{cond}(\mtx{X}) = \cond(\diag(\mtx{X}))$ to estimate $\cond(\mtx{X})$ from below. The last of these three is actually our preferred method, since it works well in practice and is extremely simple to implement.

\begin{remark}[What if Cholesky fails in stage one?]\label{rem:if_cholesky_fails}
    Set $\mtx{G} = (\pre{\mtx{M}}_n)^* (\pre{\mtx{M}}_n)$. Consider an index $k_o$ where $\mtx{G}[\lslice k_o, \lslice k_o]$ is positive definite, but $\mtx{G}[\lslice (k_o+1), \lslice (k_o+1)]$ is not.
    Running \LAPACK's \code{POTRF} function on $\mtx{G}$ will produce an error. 
    However, the leading $k_o$-by-$k_o$ submatrix of the output $\mtx{R}$ factor will be well-formed and satisfy $(\mtx{R}[\lslice k_o, \lslice k_o])^{*}\mtx{R}[\lslice k_o, \lslice k_o] = \mtx{G}[\lslice k_o, \lslice k_o]$.
    Therefore if \code{POTRF} fails at Line~\ref{line:get_R_pre} and returns an error code $k$, then we can take $k_o = k - 1$ as an initial estimate for numerical rank.
\end{remark}

\section{Pivot quality experiments}
\label{sec:empirical_pivots}

This section gives experimental comparisons of pivot quality using the \LAPACK default (\code{GEQP3}) versus those produced by CQRRPT \textit{based on} on that default.
Of course, the results of such a comparison depend heavily on the distribution family $\mathscr{F}$ and the sampling factor $\gamma$ that determines the distribution of our sketching operator.
Therefore we take this as an opportunity to show how one might set $\mathscr{F}$ and $\gamma$ in practice.

\paragraph{Background on leverage scores and coherence}
\cref{subsec:sketching_in_CQRRPT} presented results on how $\gamma$ can be chosen to achieve oblivious subspace embedding properties for various sketching families.
The relevant result for SASOs, \cref{thrm:saso_embedding}, provides worst-case bounds that are valuable in theoretical analysis.
However, some subspaces are ``easier to sketch'' with SASOs than this result would suggest, in the sense that the subspace embedding property can reliably be obtained with a far smaller sampling factor or sparsity parameter.

\textit{Leverage scores} are a useful concept for understanding when a subspace might be easy or hard to accurately sketch with a given distribution.
They quantify the extent to which a low-dimensional subspace aligns with coordinate subspaces~\cite{Mah-mat-rev_BOOK,DM16_CACM,DM21_NoticesAMS}.
\begin{definition}
    Let $\mtx{U}$ be an $m \times n$ orthonormal matrix.
    The \emph{$i^{\text{th}}$ leverage score} of $\range(\mtx{U})$ is the squared row-norm $\|\mtx{U}[i,\fslice{}]\|_2^2$.
\end{definition}
When using sketching operators such as SASOs or SRHTs, it is standard to summarize leverage scores with a concept called \textit{coherence}.
This is essentially a condition number for random sampling algorithms~\cite{Mah-mat-rev_BOOK,DM16_CACM,DM21_NoticesAMS}.
Formally, the coherence of an $m \times n$ matrix $\mtx{M}$ is $m$ times the largest leverage score of $\range(\mtx{M})$.
It is well-documented in the literature that matrices with higher coherence are harder to sketch.

\paragraph{Pivot quality metrics}
We use two pivot quality metrics.
The first is the Frobenius norms of the matrices $\Rtt_{\ell}$ in block 2-by-2 partitions of $\mtx{R}$.
This has the natural interpretation as the norm of a rank-$\ell$ approximation of $\mtx{M}_n - \mtx{Q}_{\ell}\mtx{R}_{\ell}$; we plot this metric as ratios $\|\Rtt_{\ell}^{\text{qp3}}\|_{\mathrm{F}} / \|\Rtt_{\ell}^{\text{ours}}\|_{\mathrm{F}}$.
Our second pivot quality metric is the ratios of $r_{ii} := |\mtx{R}[i,i]|$ to the singular values of $\mtx{M}$.
If $\mtx{R}$ comes from \code{GEQP3} then this ratio can be quite bad in the worst case.
Letting $\sigma_i$ denote the $i^{\text{th}}$ singular value of $\mtx{M}$, this only guarantees that $\phi_i := r_{ii} / \sigma_i$ is between $(n(n + 1)/2)^{-1/2}$ and $2^{n-1}$ \cite{Higham:blog:rrf}.
Since there is a chance for large deviations, we plot $r_{ii} / \sigma_i$ for our algorithm and \code{GEQP3} separately (rather than plotting the ratio $r_{ii}^{\text{qp3}} / r_{ii}^{\text{ours}}$).

\subsection{Example low-coherence matrices}
\label{subsec:low_coherence}
Here we consider tall $m \times n$ matrices with $m = 2^{17} = 131072$ and $n = 2000$ whose spectrum falls into one of the two following categories.
\begin{itemize}
    \item Matrices for which the first ten percent of their singular values are all equal to one and the rest are decaying polynomially down to $1/\cond(\mtx{M}) = 10^{-10}$.
    \item Matrices with a four-step ``staircase-shaped'' spectrum. The first quarter of the singular values are 1, the next quarter are $8 \cdot 10^{-10}$, the quarter after that are $4\cdot 10^{-10}$, and the final quarter are all $10^{-10}$.
\end{itemize}

The singular vectors were generated by orthogonalizing the columns of matrices with iid Gaussian entries.
Generating the left singular vectors in this way is a standard method for generating low-coherence matrices.

\begin{figure}[hbt!]
    \centering
    \begin{overpic}[width=0.8\linewidth]{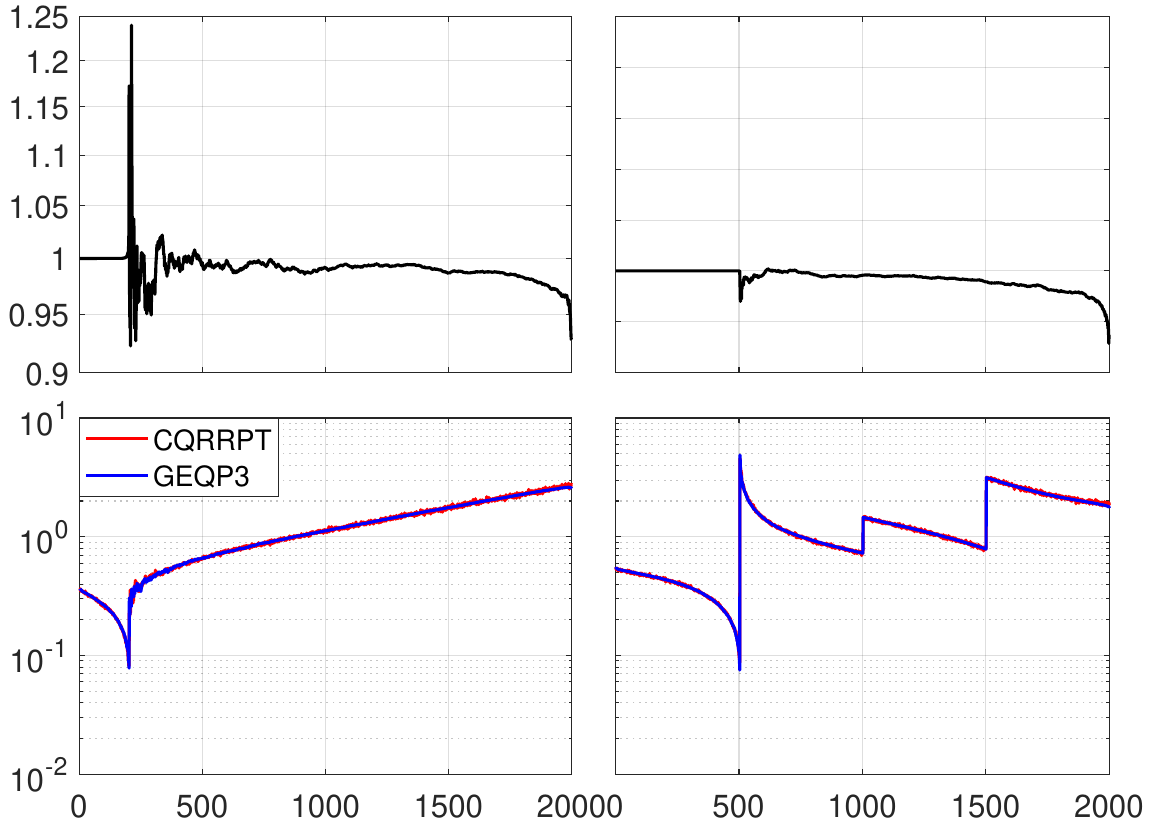}
    \put (15, 72) {polynomial-decay}
    \put (62, 72) {staircase-shaped}
    \put (25, 55) {$\scalemath{1.0}{\displaystyle\frac{\|\mtx{C}^{\text{qp3}}_\ell\|_{\mathrm{F}}}{ \|\mtx{C}^{\text{ours}}_\ell\|_{\mathrm{F}}}}$}
    \put (28, -2) {$\ell$}
    \put (25, 18) {$\scalemath{1.0}{\displaystyle\frac{|\mtx{R}[\ell,\ell]|}{\sigma_{\ell}}}$}
    \put (75, -2) {$\ell$}
    
    \end{overpic}
    \vspace{1ex}
    \small\captionof{figure}{
        \small Pivot quality results for low-coherence matrices with two types of spectral decay.}\label{fig:pivot_qual_combined}
\end{figure}

\cref{fig:pivot_qual_combined} visualizes metrics of CQRRPT's pivot quality for these matrices, when it is configured to use maximally aggressive dimension reduction via SASOs 
($\gamma = 1$ and one nonzero per column).
For both matrices the ratio of \code{GEQP3}'s residual-norm metric to that of CQRRPT is close to 1.
Although, it is noteworthy that the deviations of this ratio from 1 are more pronounced at ranks which coincide with rapid drops in the singular values.
For the second pivot quality metric, observe that the curves for CQRRPT and \code{GEQP3} are extremely similar (in fact, visually coincident) for the two types of matrices.

\FloatBarrier

\subsection{An example high-coherence matrix}
\label{subsec:high_coherence_mat}

Here we consider a matrix $\mtx{M}$ of the same dimensions as before, $(m, n) = (131072, 2000)$, constructed in three steps.
The first step is to vertically stack $c = \lfloor m / n\rfloor$ copies of the $n \times n$ identity matrix and one copy of the first $m - cn$ rows of the $n \times n$ identity.
The second step is to select $n$ rows of $\mtx{M}$ at random and multiply them by $10^{10}$.
Finally, the third step is to multiply $\mtx{M}$ on the right by a random $n \times n$ orthogonal matrix.
The left singular vectors are not explicitly generated in this matrix to ensure its high coherence.

Column one of \cref{fig:high_coherence_combined} shows pivot quality results when applying CQRRPT to $\mtx{M}$ with overly-aggressive dimension reduction; column two of \cref{fig:high_coherence_combined} shows the analogous data for only modestly more expensive dimension reduction.
The figures' combined message is two-fold: 
that there is good reason to use $\gamma > 1$ and more than one nonzero per column in $\mtx{S}$, and that extensive parameter tuning is \emph{not} needed to achieve reliable results from CQRRPT at low computational cost.

\begin{figure}[htb!]
    \centering
    \begin{overpic}[width=0.8\linewidth]{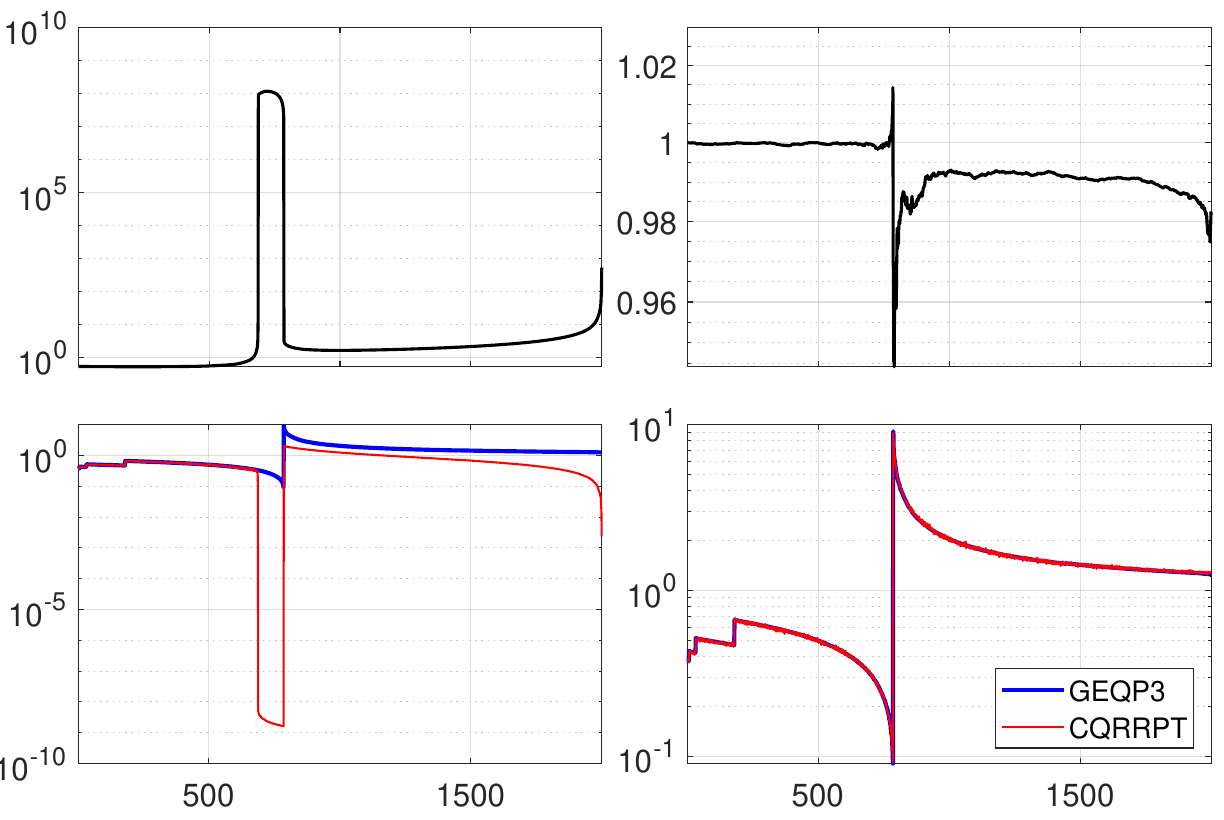}
    
    \put (63, 67) {$\gamma = 1.25$, $\operatorname{nnz/col} \equiv 4$}

    \put (14, 67) {$\gamma = 1.0$, $\operatorname{nnz/col} \equiv 1$}
    
    \put (30, 50) {$\scalemath{1.0}{\displaystyle\frac{\|\mtx{C}^{\text{qp3}}_\ell\|_{\mathrm{F}}}{ \|\mtx{C}^{\text{ours}}_\ell\|_{\mathrm{F}}}}$}
    \put (28, -1) {$\ell$}
    \put (31, 15) {$\scalemath{1.0}{\displaystyle\frac{|\mtx{R}[\ell,\ell]|}{\sigma_{\ell}}}$}
    \put (75, -1) {$\ell$}
    
    \end{overpic}
    \small\captionof{figure}{
        \small 
        Pivot quality results for a high-coherence matrix under two choices of sketching distribution parameters.
        }
        \label{fig:high_coherence_combined}
\end{figure}

\FloatBarrier

\section{Algorithm benchmarking}
\label{sec:speed_experiments}

CQRRPT, together with its testing and benchmarking frameworks, have been implemented in the open-source C++ library called \RandLAPACK.
Together with its counterpart, \RandBLAS, \RandLAPACK provides a platform for developing, testing, and benchmarking of high-performance RandNLA algorithms.
This software was developed as part of the BALLISTIC \cite{BALLISTIC} project and is actively contributed to by the authors of this paper.
All experiments in this section were run using the following version of our software:

\small
\begin{quote}
\url{https://github.com/BallisticLA/RandLAPACK/releases/tag/CQRRPT-benchmark-update}.
\end{quote}

The code for CQRRPT can be found in \code{/RandLAPACK/drivers/}.
The code for constructing and dispatching the experiments can be found in \code{/benchmark/*}.

Experiments in this section are only concerned with algorithm speed.
We measure speed in terms of either an algorithm's canonical flop rate or wall clock time.
The former metric is obtained by dividing the \code{GEQRF} flop count for a given matrix size (see \cite{LAWN41:1994}) by the wall clock time required by a particular run of the algorithm.
This metric helps in understanding how well different algorithms utilize the capabilities of a given machine.

The test matrices were generated with \RandBLAS, by sampling each entry iid from the standard normal distribution.
While these test matrices would not be suitable for assessing pivot quality, they are perfectly appropriate for speed benchmarks.
All of our tests used double-precision arithmetic, all code was compiled with the -O3 flag, and we use \code{OMP\_NUM\_THREADS} = $48$ unless otherwise stated; MKL uses the same number of threads.
We ran the experiments on a single node in UT Knoxville's ISAAC-NG \href{https://oit.utk.edu/hpsc/isaac-open-enclave-new-kpb/system-overview-cluster-at-kpb/}{cluster}, equipped with two Intel Xeon Gold 6248R processors. Detailed machine specifications appear in \cref{table:processor_config}.
\FloatBarrier
\begin{table}[htp]
\small\def\arraystretch{1.3}
\centering
\begin{tabular}{|cc||c|}

\hline
\multicolumn{2}{|c||}{\textbf{}}                                                                                                                                            & \textbf{Intel Xeon Gold 6248R (2x)}     \\ \hline \hline
\multicolumn{2}{|c||}{\textbf{Cores per socket}}                                                                                                                                     & 24                                       \\ \hline 
\multicolumn{1}{|c|}{\multirow{2}{*}{\textbf{Clock Speed}}}                                                                                      & \textbf{\begin{tabular}[c]{@{}c@{}}Base\end{tabular}}   & 3.00 GHz                              \\ \cline{2-3} 
\multicolumn{1}{|c|}{}                                                                                      & \textbf{\begin{tabular}[c]{@{}c@{}}Boost\end{tabular}}  & 4.00 GHz                               \\ \hline
\multicolumn{1}{|c|}{\multirow{3}{*}{\textbf{\begin{tabular}[c]{@{}c@{}}Cache sizes per socket\end{tabular}}}}         & \textbf{L1}                                                   & 1536 KB                                 \\ \cline{2-3} 
\multicolumn{1}{|c|}{}                                                                                      & \textbf{L2}                                                   & 24 MB                                   \\ \cline{2-3} 
\multicolumn{1}{|c|}{}                                                                                      & \textbf{L3}                                                   & 35.75 MB                                \\ \hline
\multicolumn{1}{|c|}{\multirow{2}{*}{\textbf{RAM}}}                                                         & \textbf{Type}                                                 & DDR4-2933                                  \\ \cline{2-3} 
\multicolumn{1}{|c|}{}                                                                                      & \textbf{\begin{tabular}[c]{@{}c@{}}Size \end{tabular}}        & 192 GB                                  \\ \hline
\multicolumn{2}{|c||}{\textbf{\BLAS \& \LAPACK}}                                                                                                                                     & MKL 2023.2                                     \\ \hline
\multicolumn{2}{|c||}{\textbf{\begin{tabular}[c]{@{}c@{}} GFLOPS in GEMM\end{tabular}}}                                                                                     & 1570                                        \\ \hline
\end{tabular}
\caption{\small Key configurations on hardware where testing was performed.}
\label{table:processor_config}
\end{table} 

\FloatBarrier

\subsection{CQRRPT versus other algorithms}
\label{sublec:alg_compare}

Our first round of experiments compares CQRRPT to other algorithms for QR and QRCP.
Specifically, we compare our algorithm to the following.
\begin{itemize}
    \item \code{GEQRF} - standard unpivoted Householder QR.
    \item \code{GEQR} - unpivoted QR, which dispatches either a specialized algorithm for QR of tall and skinny matrices \textit{or} a general-purpose algorithm according to implementation-specific logic.
    The selected algorithm is nominally based on a prediction of which algorithm will be more efficient for the given matrix.
    Intel MKL documentation does not promise that \code{GEQR} makes the optimal decision in this regard.
    \item \code{GEQP3} - standard pivoted QR.
    \item \code{GEQPT} - composed of performing \code{GEQR} on an input matrix to get the factors $\mtx{Q}_{1}$ and $\mtx{R}_{1}$ and then getting data $(\mtx{Q}_{2}, \mtx{R}, J)$ by running \code{GEQP3} on $\mtx{R}_{1}$; the final matrix $\mtx{Q}$ is defined implicitly as the product of $\mtx{Q}_{1}$ and $\mtx{Q}_{2}$.
    \item \code{sCholQR3} - shifted CholeskyQR3, our implementation of \cite[Algorithm 4.2]{FK2020}, using the Frobenius norm on Line $3$ of said algorithm.
\end{itemize}
The performance data for each algorithm is taken as the best of its execution times over twenty consecutive runs.

\paragraph{Varying matrix sizes.}
Our first set of experiments was run on $m \times n$ matrices with $m = 65536$ (\cref{fig:qr_flops}, left) and $m = 131072$ (\cref{fig:qr_flops}, right), with $n$ varying as the powers of two from $512$ to $8192$. The sampling factor, $\gamma$, is set to the default value of $1.25$; the number of nonzeros per column in the sketching operator is set to the default value of $4$.
Note that all matrices in this experiment are out of cache.

\begin{figure}[htb!]
\centering
\begin{minipage}{.5\textwidth}
  \centering
    \begin{overpic}[width=\textwidth]{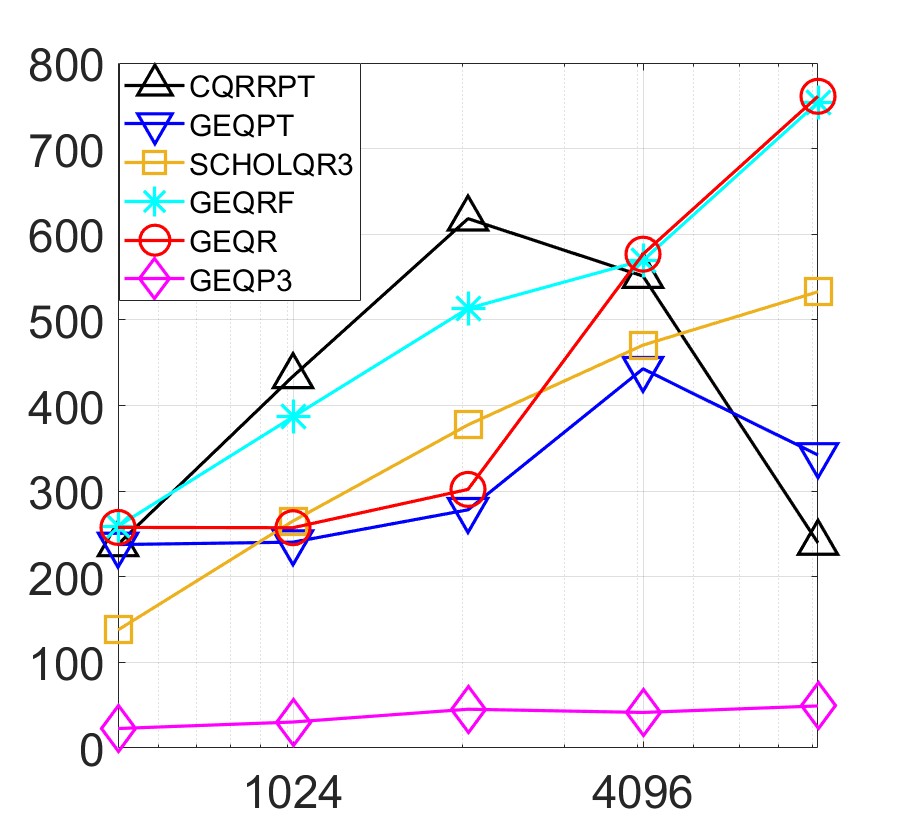}
    \put (-2, 49) {\rotatebox[origin=c]{90}{\textbf{GFLOP/s}}}
    \put (43, 1) {\textbf{columns}}
    \end{overpic}
\end{minipage}
\begin{minipage}{.49\textwidth}
  \centering
    \begin{overpic}[width=\textwidth]{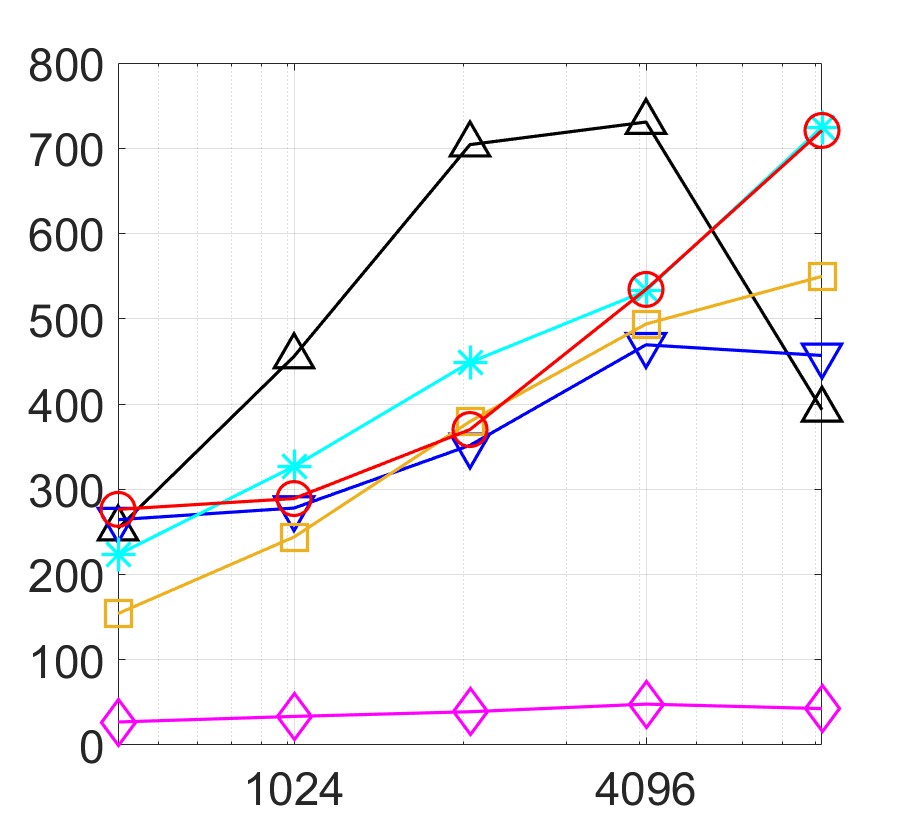}
    \put (43, 1) {\textbf{columns}}
    \end{overpic}
\end{minipage}
\captionof{figure}{\small QR schemes performance comparisons for matrices with fixed numbers of rows ($2^{16} = 65536$, and $2^{17} = 131072$ respectively) and varying numbers of columns ($512, \ldots, 8192$). 
In the case with $131072$ rows, CQRRPT remains the fastest algorithm up until $n = 8192$; at that point, operations on submatrices in underlying \LAPACK routines no longer fit in the cache.
}
\label{fig:qr_flops}
\end{figure}

The results show that with an increase in input matrix size, CQRRPT outperforms alternative pivoted schemes and has either comparable or superior performance to unpivoted schemes. 
We are seeing an order of magnitude acceleration over \code{GEQP3}, and accelerations of up to $2$x over \code{GEQR} and \code{GEQPT}, and $1.6$x over \code{GEQRF}, for matrices with $131072$ rows.
On a plot with $65536$ rows, the performance of CQRRPT declines after $2048$ columns due to the fact that the matrix is not tall enough.

Note that \code{GEQR} fails to match the performance of \code{GEQRF} and consequently causes poor performance in \code{GEQPT}.
This is likely due to the fact that the matrix sizes that we use in our experiments are not considered ``tall enough'' by the internal metric of the MKL implementation of \code{GEQR}.
For experiments with matrices where $m$ is several orders of magnitude larger than $n$, we refer the readers to \cref{sec:more_experiments}.

\paragraph{Varying number of threads.}
\cref{table:threads} depicts the performance scaling of various algorithms with the change in number of threads used. 
For the dimensions of the test matrix used here, CQRRPT remains the fastest algorithm for any number of threads. 
\begin{table}[h!]
\small \centering
\begin{tabular}{ c c c c c c c }
 \hline
 Threads & CQRRPT & GEQPT & sCholQR3 & GEQRF & GEQR & GEQP3 \\ 
 \hline
  8   & 250\phantom{.234} & 159\phantom{.801} & 147\phantom{.178} & 121\phantom{.854} & 166\phantom{.525} & 21\phantom{.3436} \\
  16  & 490\phantom{.108} & 286\phantom{.961} & 276\phantom{.693} & 274\phantom{.553} & 300\phantom{.925} & 25\phantom{.8516} \\
  24  & 577\phantom{.212} & 351\phantom{.439} & 316\phantom{.223} & 343\phantom{.984} & 369\phantom{.317} & 29\phantom{.2671} \\
  36  & 677\phantom{.429} & 372\phantom{.021} & 388\phantom{.738} & 411\phantom{.510} & 393\phantom{.478} & 50\phantom{.7389} \\
  48  & 704\phantom{.264} & 351\phantom{.525} & 379\phantom{.473} & 448\phantom{.884} & 370\phantom{.047} & 39\phantom{.025} \\
\hline
\end{tabular}

\caption{\small Canonical GFLOP/s scaling with the number of threads used, given $\mtx{M} \in \R^{131072 \times 2048}$. At this particular input matrix size, CQRRPT outperforms all other listed algorithms.}
\label{table:threads}
\end{table}

It is worth noting that CQRRPT has another possible performance advantage over the alternative algorithms that come from the output format that CQRRPT uses. Namely, having an explicit representation of a $\mtx{Q}$-factor ensures that the factor can be applied via a fast \code{GEMM} function as opposed to a slower \code{ORMQR}, which is used to apply an implicitly-stored $\mtx{Q}$-factor. 
We acknowledge the existence of applications where having an implicitly stored $\mtx{Q}$-factor is a necessity (for example, when the full square $\mtx{Q}$ is required).
In the context of such applications, the output format of CQRRPT would be disadvantageous.

\subsection{Profiling and optimizing CQRRPT}
\label{subsec:algorithm_engineering}

As seen from Algorithm \ref{alg:CQRRPT}, CQRRPT consists of a handful of subcomponents. 
Understanding how each such subcomponent affects overall runtime for varying input parameters is important for potential future optimizations of~CQRRPT.
To give perspective on this, we present some timing data in~\cref{CQRRPT Inner Speed Fig 1}.
As before, we use an $m \times n$ Gaussian test matrix with $m = 131072$ and $n$ as powers of two from $32$ to $16384$.
We also set the sampling factor, $\gamma$, to the default value of $1.25$.

\FloatBarrier

\begin{figure}[htp]
\minipage{0.6\textwidth}
\centering
    \begin{overpic}[width=0.9\linewidth]{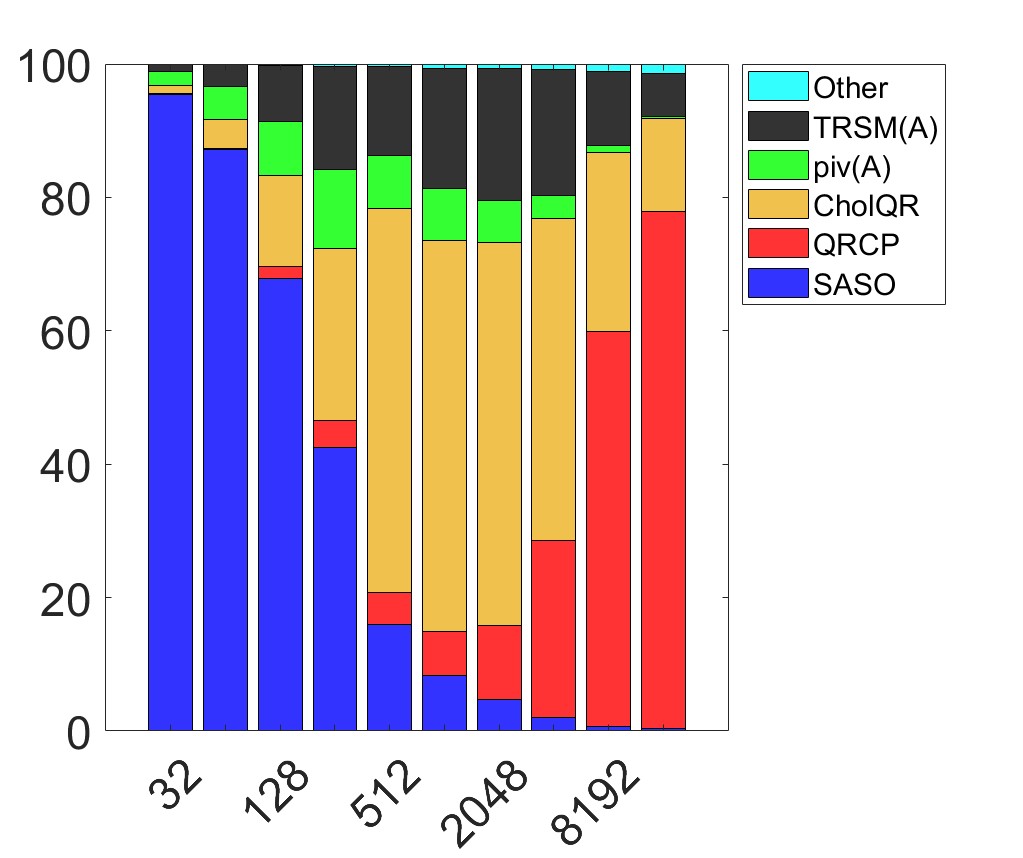}
    \put (-2, 42) {\rotatebox[origin=c]{90}{\textbf{runtime (\%)}}}
    \put (28, -3) {\textbf{columns}}
    \end{overpic}
  \endminipage
  \minipage{0.4\textwidth}
  \captionof{figure}{\small Percentages of CQRRPT's runtime, occupied by its respective subroutines. 
  Note that the cost of sketching becomes negligible for larger matrices. 
  Note also that when $d \geq n$, the cost of applying QRCP to the $d \times n$ sketch grows as $\Omega(n^3)$.
  By contrast, the cost of applying CholeskyQR to the $m \times n$ preconditioned matrix grows as $\mathcal{O}(m n^2)$.
  Therefore, it is reasonable that QRCP consumes a larger fraction of runtime as $n$ increases.
  }
  \label{CQRRPT Inner Speed Fig 1}
  \endminipage
\end{figure}

An immediate observation to be made here is that QRCP and CholeskyQR become the most time-consuming subroutines as the matrix size increases. 
Additionally, QRCP clearly becomes the main computational bottleneck. 
As the embedding dimension parameter greatly affects the timing of QRCP, it is important to understand how the choice of this parameter affects CQRRPT's overall wall clock time.

\FloatBarrier

\subsubsection{Effect of varying the embedding dimension parameter}\label{subsec:embed_effect}

Let's take a look at the effect of varying the embedding dimension in practice. 
We use Gaussian random input matrices of sizes $131072 \times n $ for $n \in \{1024, 2048, 4096\}$ with the ratio $\gamma = d/n$ varying from $1$ to $4$ in steps of $0.5$.
Results are presented in \cref{fig:var_d}.

\begin{figure}[htb!]
\minipage{0.55\textwidth}
\centering
    \begin{overpic}[width=0.9\linewidth]{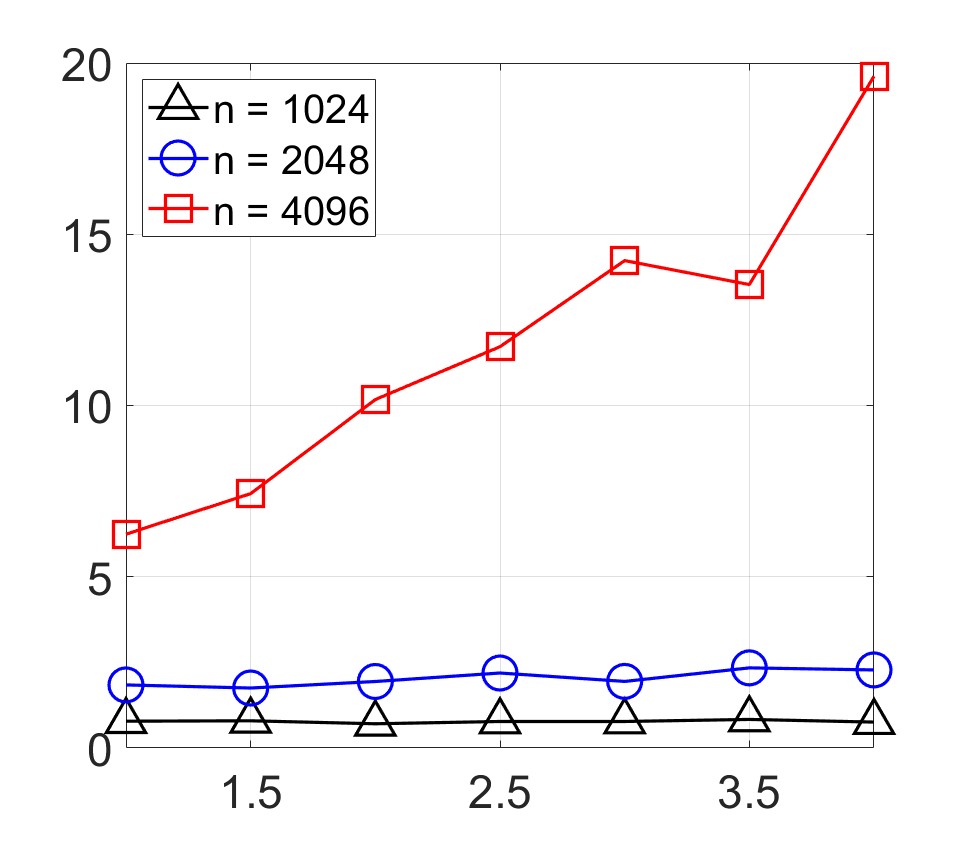}
    \put (-2, 45) {\rotatebox[origin=c]{90}{\textbf{runtime (s)}}}
    \put (47, -2) {\textbf{d/n}}
    \put (17.5, 8) {\textbf{\textcolor{red}{X}}}
    \end{overpic}
  \endminipage
  \minipage{0.4\textwidth}
  \captionof{figure}{\small 
  Effect of varying the sampling factor $\gamma \in \{1, 1.5, 2, ..., 4\}$ for matrices of sizes $131072 \times \{1024, 2048, 4096\}$. Runtime represents the wall clock time for the full execution of CQRRPT. An increase of the embedding dimension has a larger effect on wider matrices, as QRCP becomes more expensive with the increased number of columns, as shown in \cref{CQRRPT Inner Speed Fig 1}.
  Note that our default value of $\gamma = 1.25$ is marked with \textbf{\textcolor{red}{X}}; performance in this case can be inferred by interpolating between the first two data points of each series.
  }
  \label{fig:var_d}
  \endminipage
\end{figure}

\FloatBarrier

Note that for a smaller column size $n$, increasing the parameter $\gamma$ has little to no effect on CQRRPT's overall runtime, as the portion of runtime dedicated to performing the QRCP on a sketch is smaller. 
However, as the figure suggests, for a larger $n=4096$, the effect of increasing $\gamma$ is rather noticeable. 
Recalling our results from \cref{subsec:low_coherence}, using small values of $\gamma$ can reliably produce accurate results for matrices of low coherence. Therefore, a good starting point for the value of this parameter would be somewhere between one and two in such cases.

\subsubsection{Performing HQRRP on the sketch}
\label{subsec:HQRRP_backed_CQRRPT}

By now we have repeatedly seen that CQRRPT's method for QRCP of $\sk{\mtx{M}}$ can decisively impact its runtime when $n$ is large.
Here we explore the possibility of accelerating this operation by replacing CQRRPT's call to \code{GEQP3} with a call to HQRRP (Householder QR Factorization With Randomization for Pivoting, see \cite{MOHvdG:2017:QR}).
As outlined in \cref{subsec:earlier_rand_qrcp}, HQRRP uses low-dimensional random projections to make pivot decisions in small blocks. 

We incorporated an HQRRP implementation into \RandLAPACK and ran experiments comparing HQRRP to \code{GEQP3} directly.
In these experiments, we used square matrices of sizes $n \times n$ for $n \in \{1000, 2000, \ldots, 10000\}$ with varying number of OpenMP threads used (which also sets the number of threads used by Intel MKL).
Another important tuning parameter to consider in HQRRP benchmarking is the block size.
Upon experimenting with different HQRRP block sizes, we have concluded that on our particular system, using the block size of $32$ results in the best performance for HQRRP. 
Using other block sizes in $\{8, 16, 64, 128, 256\}$ resulted in worse performance than what we report in this paper. 
Results are presented in \cref{fig:hqrrpvsgeqp3}.

\begin{figure}[htp]
\centering
\minipage{0.5\textwidth}
        \begin{overpic}[width=\textwidth]{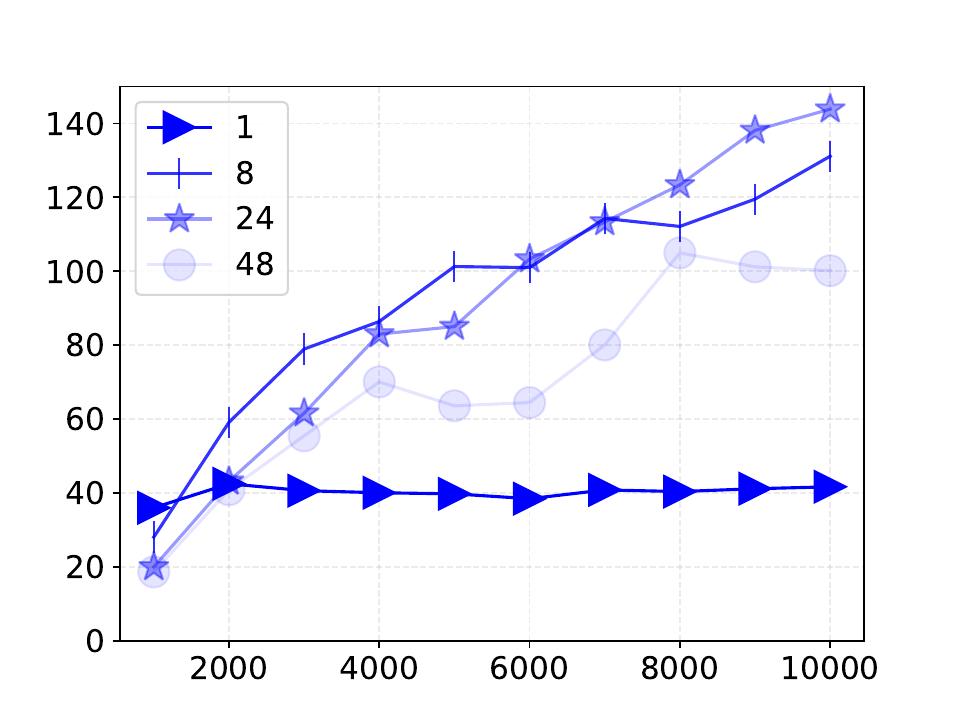}
        \put (42, 68) {\textbf{HQRRP}}
        \put (-1, 35) {\rotatebox[origin=c]{90}{\textbf{GFLOP/s}}}
        \put (42, -1) {\textbf{columns}}
        \end{overpic}
\endminipage
\minipage{0.5\textwidth}
        \begin{overpic}[width=\textwidth]{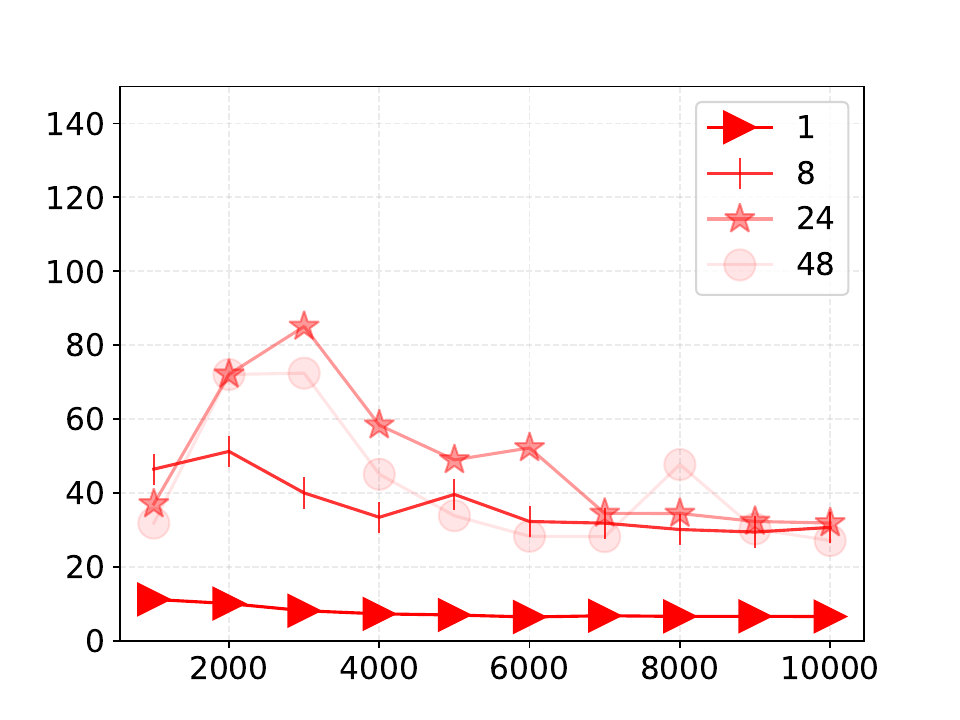}
        \put (42, 68) {\textbf{GEQP3}}
        \put (42, -1) {\textbf{columns}}
        \end{overpic}
\endminipage\vfill

\caption{\small Canonical flop rates for HQRRP vs \code{GEQP3} with \code{OMP\_THREADS} in $\{1, 8, 24, 48\}$. 
The experimental speedup of HQRRP over \code{GEQP3} for the larger matrix sizes is significant, but it does not match the speedup reported in \cite{MOHvdG:2017:QR}; this may be due to a variety of factors (primarily, the machine used for benchmarking). 
}
\label{fig:hqrrpvsgeqp3}
\end{figure}

As seen in \cref{fig:hqrrpvsgeqp3},
HQRRP achieves its peak performance with $24$ threads for large matrix sizes, undoubtedly beating the performance of \code{GEQP3}. 
However, for small input matrices, using a large number of threads results in HQRRP failing to outperform \code{GEQP3}.
When 48 threads are used, the performance of HQRRP stagnates, similar to other QR factorization algorithms; see \cref{sec:more_experiments} for an explanation of why this happens.

\cref{fig:hqrrpvsgeqp3} allows us to conclude that it is preferred to use $8$ threads when running HQRRP on the matrices with $n \leq 6000$; using $24$ threads would be optimal for matrices of larger sizes.
Additionally, using $24$ threads should result in the best performance of \code{GEQP3}. 
Assuming CQRRPT used ideal parameter choices for HQRRP and \code{GEQP3} on our machine, we would expect the HQRRP version to outperform the \code{GEQP3} version when $n \geq 3000$.

\section{Conclusion}
\label{sec:conclusion}

This paper introduces CQRRPT, a randomized algorithm for computing a QRCP factorization of a tall matrix. The algorithm consists of two ingredients: obtaining the pivot order and an approximate R-factor from a small random sketch; and retrieving the complete QRCP factorization by a preconditioned CholeskyQR.
In comparison to standard QR algorithms, such as Householder QR, TSQR, and their pivoted (possibly randomized) derivatives, our CQRRPT requires $\frac{4}{3}$x less flops\footnote{to compute QRCP factorization in explicit form} and is better suited to modern computational architectures due to fewer global synchronization points, fewer data passes, and/or more straightforward reduction operator.
Notably, CQRRPT provides numerical stability even for rank-deficient matrices, in contrast to existing sketched QR, and CholeskyQR methods.
This makes the algorithm an appealing tool for orthogonalization. 

We established rigorous conditions for the random sketch required by CQRRPT to attain the rank-revealing property and numerical stability.
These conditions can be met by employing a sketching distribution that possesses oblivious subspace embedding property.
The robustness and performance advantages of CQRRPT were confirmed through extensive numerical experiments of a \RandLAPACK implementation based on \RandBLAS.
In particular, we observed flop rates improvements of up to $10$x compared to the standard pivoted QR routine $\code{GEQP3}$ and $1.4$x compared to the unpivoted QR routine $\code{GEQRF}$ from \LAPACK.

This paper also highlights several promising research directions.
First, the stability requirements and the efficiency of CQRRPT in certain scenarios can be improved by using more sophisticated sketching operators and schemes for computing the sketch-orthonormal preconditioned matrix.
The performance of CQRRPT in modern computing environments such as GPUs, distributed systems, and multi-(or low-)precision arithmetic systems also remains to be explored.
Second, combining our theoretical results from~\cref{sec:analysis_exact} with the analysis from~\cite{XGL:2017:RandQRCP} may lead to a deeper understanding of the properties of HQRRP and other RandNLA methods existing or yet to emerge.
Third, CQRRPT can be extended to be applicable to matrices with any aspect ratio by integrating ideas from this paper with ideas from~\cite{XGL:2017:RandQRCP,DG:2017:QR,Martinsson:2015:QR,MOHvdG:2017:QR} for HQRRP and the related randomized and communication-avoiding QR techniques.

\subsection*{Acknowledgements}
\small 

The authors would like to thank three anonymous referees for valuable feedback, which greatly improved the clarity of our results and the paper's presentation.

This work was partially funded by an NSF Collaborative Research Framework: Basic ALgebra LIbraries for Sustainable Technology with Interdisciplinary Collaboration (BALLISTIC), a project of the International Computer Science Institute, the University of Tennessee’s ICL, the University of California at Berkeley, and the University of Colorado at Denver (NSF Grant Nos. 2004235, 2004541, 2004763, 2004850, respectively).
MWM would also like to acknowledge the NSF, DOE, and ONR Basic Research Challenge on RLA for providing partial support for this work.

RM was partially supported by Laboratory Directed Research and Development (LDRD) funding from Sandia National Laboratories; Sandia is a multimission laboratory managed and operated by National Technology \& Engineering Solutions of Sandia, LLC, a wholly-owned subsidiary of Honeywell International Inc., for the U.S. Department of Energy’s National Nuclear Security Administration under contract DENA0003525. 

This research was sponsored by the Department of the Air Force Artificial
Intelligence Accelerator and was accomplished under Cooperative
Agreement Number FA8750-19-2-1000.

The views and conclusions contained
in this document are those of the authors and should not be
interpreted as presenting the official policies, either expressed or
implied, of the Department of the Air Force, the Department of Energy, or the U.S.\ Government.
The U.S.\ Government is authorized to reproduce and distribute reprints
for Government purposes notwithstanding any copyright notation herein.

\bibliography{references/refs.bib}
\bibliographystyle{alpha}

\normalsize

\appendix

\section{Analysis in exact arithmetic}
\label{sec:analysis_exact}

\subsection{Detailed arithmetic complexity analysis}
\label{app: flop}

In an effort to give the reader a perspective on an approximate flop count of \cref{alg:CQRRPT}, 
let us take a closer look at what happens inside this algorithm after computing the sketch.

\begin{enumerate}
    \item Factor $\sk{\mtx{M}}$ by QRCP. The cost of this step depends on the algorithm used for QRCP. 
        By default, we use the method from \LAPACK's \code{GEQP3}. 
        With \LAPACK's \code{GEQP3}, this operation would cost $4dnk - 2k^{2}(d + n) + 4k^{3}/3$ flops \cite[Algorithm 5.4.1]{GvL:2013:MatrixComputationsBook}, where $k = \rank(\sk{\mtx{M}})$ and $d$ is the user-defined embedding dimension of a sketch.
    \item Apply the preconditioner to $\mtx{M}$, to get $\pre{\mtx{M}}$. This step is done via \code{TRSM}, so the cost is $mk^2$ \cite[Page 120]{LAWN41:1994}.
    \item Run CholeskyQR on $\pre{\mtx{M}}$.
        This operation is composed of \code{SYRK}, \code{POTRF}, and \code{TRSM} functions. 
        This operation would cost $mk(k + 1) + k^{3}/3 + k^2/2 + k/6 + mk^2$ flops \cite[Page 120]{LAWN41:1994}.
    \item Combine the triangular factors from QRCP on $\sk{\mtx{M}}$ and CholeskyQR on $\pre{\mtx{M}}$.
        This cost is negligible; an $\mathcal{O}(n^3)$ \code{TRMM}.
\end{enumerate}

This results in a cumulative flop count of $2mk^{2} + mk(k + 1) + 4dnk - 2k^{2}(d + n) + 5k^{3}/3 + k^2/2 + k/6$. Put simply, we have an algorithm with an asymptotic flop count with a leading term $3mn^{2}$ when $d \in o(n)$ and $k = n$.

\subsection{Rank-revealing properties}
\label{subsec:rankrevealing_proofs}

Here we prove pivot quality results claimed in   \cref{thm:cqrrpt_RRQR,thm:cqrrpt_SRRQR}.
Our arguments use the following concept to relate subspace embedding results (common in RandNLA theory) and restricted condition numbers.

\begin{definition}\label{def:effective_distortion}
    The \underline{effective distortion} of a $d \times m$ matrix $\mtx{S}$ for a linear subspace $L \subset \R^m$ is the smallest real number $\delta$ where there exists a $t \geq 0$ for which $t \mtx{S}$ is a $\delta$-embedding for $L$.
\end{definition}
The specific relationship is as follows \cite[Proposition A.1.1]{RandLAPACK_Book}.
\begin{proposition}\label{prop:effective_distortion}
    Suppose the columns of $\mtx{U}$ are an orthonormal basis for $L = \range(\mtx{M})$, and let $\distortion$ denote the effective distortion of $\mtx{S}$ for $L$.
    If $\distortion < 1$, then we have
    \[
        \cond(\mtx{S}\mtx{U}) = \sqrt{\frac{1+\distortion}{1 - \distortion}}.
    \]
    Furthermore, we have $\distortion = 1$ if and only if $\rank(\mtx{S}\mtx{M}) < \rank(\mtx{M})$.
\end{proposition}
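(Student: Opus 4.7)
My plan is to reduce the optimization over $t \geq 0$ implicit in \cref{def:effective_distortion} to a one-dimensional problem in the extreme singular values of $\mtx{S}\mtx{U}$, and then read off both claims from the resulting closed form. Write $\sigma_{\min}$ and $\sigma_{\max}$ for the smallest and largest singular values of $\mtx{S}\mtx{U}$. The key observation is that the singular values of $t\mtx{S}\mtx{U}$ are simply $t$ times those of $\mtx{S}\mtx{U}$, so the $\delta$-embedding condition \eqref{eq:subspace_embedding_condition} for $t\mtx{S}$ on $L$ becomes the pair of scalar inequalities
\[
 1 - \delta \;\leq\; t\,\sigma_{\min} \qquad\text{and}\qquad t\,\sigma_{\max} \;\leq\; 1 + \delta .
\]

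First I would handle the regime $\sigma_{\min} > 0$. Here the two inequalities are compatible for some $t \geq 0$ if and only if $(1-\delta)/\sigma_{\min} \leq (1+\delta)/\sigma_{\max}$, which rearranges to $\delta \geq (\sigma_{\max} - \sigma_{\min})/(\sigma_{\max} + \sigma_{\min})$. Thus the effective distortion equals $\delta^\star = (\sigma_{\max} - \sigma_{\min})/(\sigma_{\max} + \sigma_{\min})$, which lies in $[0,1)$. A direct calculation with this value of $\delta^\star$ gives
\[
 \frac{1 + \delta^\star}{1 - \delta^\star} \;=\; \frac{2\sigma_{\max}/(\sigma_{\max}+\sigma_{\min})}{2\sigma_{\min}/(\sigma_{\max}+\sigma_{\min})} \;=\; \frac{\sigma_{\max}}{\sigma_{\min}} \;=\; \cond(\mtx{S}\mtx{U}),
\]
establishing the equality in the ``$\delta < 1$'' case of the proposition.

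For the second statement I would argue both directions from the same closed form. If $\sigma_{\min} > 0$ then by the computation above $\delta^\star < 1$; conversely, if $\sigma_{\min} = 0$ then the constraint $1-\delta \leq t\sigma_{\min} = 0$ forces $\delta \geq 1$ (and $\delta = 1$ is attained by a suitable $t$ when $\sigma_{\max} > 0$, or by any $t$ when $\mtx{S}\mtx{U} = \mtx{0}$), so $\delta^\star = 1$. Hence $\delta^\star = 1$ iff $\sigma_{\min}(\mtx{S}\mtx{U}) = 0$ iff $\mtx{S}\mtx{U}$ is rank-deficient. The last step is to convert rank-deficiency of $\mtx{S}\mtx{U}$ into rank-deficiency of $\mtx{S}\mtx{M}$: since $\range(\mtx{M}) = \range(\mtx{U}) = L$, applying $\mtx{S}$ to this identity yields $\range(\mtx{S}\mtx{M}) = \mtx{S}(L) = \range(\mtx{S}\mtx{U})$, so $\rank(\mtx{S}\mtx{M}) = \rank(\mtx{S}\mtx{U})$, and $\rank(\mtx{U}) = \rank(\mtx{M})$ by orthonormality. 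Combining gives $\delta^\star = 1 \iff \rank(\mtx{S}\mtx{M}) < \rank(\mtx{M})$, completing the argument.

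The only subtle point is bookkeeping around the boundary case $\sigma_{\min} = 0$: one has to confirm that no value of $t$ rescues the lower bound $1-\delta$ unless $\delta \geq 1$, and that the infimum $\delta = 1$ is actually attained (rather than merely approached), so that the proposition's dichotomy $\delta < 1$ vs.\ $\delta = 1$ is exhaustive. Everything else is a short manipulation of the two scalar constraints, so I do not expect a real obstacle — the main technical content is simply the reduction to the two-parameter problem in $(\sigma_{\min},\sigma_{\max})$.
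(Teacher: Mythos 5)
Your argument is correct: reducing the feasibility of the two scalar constraints $1-\delta \leq t\sigma_{\min}$ and $t\sigma_{\max} \leq 1+\delta$ to the closed form $\delta^\star = (\sigma_{\max}-\sigma_{\min})/(\sigma_{\max}+\sigma_{\min})$ immediately yields $\tfrac{1+\delta^\star}{1-\delta^\star}=\cond(\mtx{S}\mtx{U})$, and your handling of the boundary case $\sigma_{\min}=0$ (including attainment of $\delta=1$ and the identification $\rank(\mtx{S}\mtx{U})=\rank(\mtx{S}\mtx{M})$) is sound. Note that the paper itself does not prove this proposition but cites it from an external monograph; your one-parameter optimization over the rescaling $t$ is the standard argument for this fact, so there is nothing to reconcile.
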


Now we fix an arbitrary permutation vector $J$ of $\{1,\ldots,n\}$ and analyze the R-factors from QR decompositions of the pivoted matrices $\mtx{M}_n = \mtx{M}[\fslice{},J]$ and $\sk{\mtx{M}}_n = \sk{\mtx{M}}[\fslice{},J]$.
Set $k = \rank(\mtx{M})$. We assume $\rank(\sk{\mtx{M}}) = k$, since otherwise there is nothing to prove.

\begin{proposition}\label{thm:skRRQR}
    If $\mtx{S}$ has effective distortion $\distortion < 1$ for the range of $\mtx{M}$, then we have 
	\begin{subequations}
	\begin{align}
		\frac{
                \sigmaSub{j}{\Roo_{\ell}}
            }{
                \sigmaSub{j}{\mtx{M}}
            } 
            &\geq \left (\frac{1-\distortion}{1+\distortion} \right )^{1/2} \frac{\sigmaSub{j}{\sk{\Roo}_{\ell}}}{\sigmaSub{j}{\sk{\mtx{M}}}} \quad \text{ for }\quad j \leq \ell \label{eq:skRRQRa} \\ 
		\frac{\sigmaSub{j}{{\Rtt_{\ell}}}}{\sigmaSub{\ell+j}{{\mtx{M}}}} &\leq \left (\frac{1+\distortion}{1-\distortion} \right )^{1/2} \frac{\sigmaSub{j}{{\sk{\Rtt}_{\ell}}}}{\sigmaSub{\ell+j}{{\sk{\mtx{M}}}}} \quad\text{ for }\quad j \leq k-\ell, \label{eq:skRRQRb} \\
		\text{ and } \qquad \|(\Roo_{\ell})^{-1} \Rot_{\ell}\|_2 &\leq \|(\sk{\Roo}_{\ell})^{-1} \sk{\Rot}_{\ell}\|_2+	
		\left (\frac{1+\distortion}{1-\distortion} \right )^{1/2} \sigmaMin{\sk{\Roo}_{\ell}}^{-1} \| \sk{\Rtt}_{\ell} \|_2. \label{eq:skRRQRc}	
	\end{align}
	\end{subequations}
 \end{proposition}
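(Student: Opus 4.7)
The approach rests on exploiting \cref{def:effective_distortion}: the condition $\delta < 1$ guarantees the existence of a scalar $t > 0$ such that $(1-\delta)\|\vct{v}\|_2 \leq \|t\mtx{S}\vct{v}\|_2 \leq (1+\delta)\|\vct{v}\|_2$ for every $\vct{v} \in L := \range(\mtx{M})$. Each matrix I compare to its sketched counterpart has its columns in $L$, so singular values in the original space translate to those in the sketched space up to a multiplicative factor in $[1-\delta, 1+\delta]$, and the unknown scalar $t$ will cancel in every ratio appearing in \eqref{eq:skRRQRa}--\eqref{eq:skRRQRc}. As a warm-up, I prove \eqref{eq:skRRQRa} by noting that $\mtx{Q}_\ell$ and $\sk{\mtx{Q}}_\ell$ have orthonormal columns, so $\sigmaSub{j}{\Roo_\ell} = \sigmaSub{j}{\mtx{M}_\ell}$ and $\sigmaSub{j}{\sk{\Roo}_\ell} = \sigmaSub{j}{\sk{\mtx{M}}_\ell}$. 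The embedding then produces $\sigmaSub{j}{\mtx{M}_\ell} \geq t(1+\delta)^{-1}\sigmaSub{j}{\sk{\mtx{M}}_\ell}$ and $\sigmaSub{j}{\mtx{M}} \leq t(1-\delta)^{-1}\sigmaSub{j}{\sk{\mtx{M}}}$, and dividing yields the claim.

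The main obstacle is \eqref{eq:skRRQRb}, because $\Rtt_\ell$ and $\sk{\Rtt}_\ell$ live in different ambient spaces and cannot be related by a direct application of $\mtx{S}$. I reconcile them through a common residual matrix. Let $\mtx{Y} := (\Roo_\ell)^{-1}\Rot_\ell$ and $\sk{\mtx{Y}} := (\sk{\Roo}_\ell)^{-1}\sk{\Rot}_\ell$. The identity $\xoverline{\mtx{M}}_\ell - \mtx{M}_\ell\mtx{Y} = \mtx{Q}[\fslice{},\tslice{\ell+1}{k}]\Rtt_\ell$, together with the column-orthonormality of the trailing columns of $\mtx{Q}$, shows that this residual shares its singular values with $\Rtt_\ell$. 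The normal equations give $\mtx{M}_\ell^\trans(\xoverline{\mtx{M}}_\ell - \mtx{M}_\ell\mtx{Y}) = 0$, so the Pythagorean identity $\|(\xoverline{\mtx{M}}_\ell - \mtx{M}_\ell\mtx{Y}')\vct{v}\|_2^2 = \|(\xoverline{\mtx{M}}_\ell - \mtx{M}_\ell\mtx{Y})\vct{v}\|_2^2 + \|\mtx{M}_\ell(\mtx{Y} - \mtx{Y}')\vct{v}\|_2^2$ holds for every $\mtx{Y}'$ and $\vct{v}$; invoking the min-max characterization of singular values then yields $\sigmaSub{j}{\Rtt_\ell} \leq \sigmaSub{j}{\xoverline{\mtx{M}}_\ell - \mtx{M}_\ell\sk{\mtx{Y}}}$. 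The columns of this latter residual lie in $L$, so the embedding combined with the analogous block-triangular identity for $\sk{\mtx{M}}_n$ delivers $\sigmaSub{j}{\Rtt_\ell} \leq t(1-\delta)^{-1}\sigmaSub{j}{\sk{\Rtt}_\ell}$. Pairing this with $\sigmaSub{\ell+j}{\mtx{M}} \geq t(1+\delta)^{-1}\sigmaSub{\ell+j}{\sk{\mtx{M}}}$ from the embedding applied to $\mtx{M}$ completes \eqref{eq:skRRQRb}.

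For \eqref{eq:skRRQRc}, I exploit the stronger form of the same orthogonality: $\mtx{M}_\ell^\dagger(\xoverline{\mtx{M}}_\ell - \mtx{M}_\ell\mtx{Y}) = 0$ (which holds because $\mtx{M}_\ell^\dagger = (\Roo_\ell)^{-1}\mtx{Q}_\ell^\trans$ factors through $\mtx{M}_\ell^\trans$ and $\mtx{M}_\ell^\dagger\mtx{M}_\ell = \mtx{I}$). Adding and subtracting $\mtx{M}_\ell\sk{\mtx{Y}}$ rearranges this into $\mtx{Y} = \sk{\mtx{Y}} + \mtx{M}_\ell^\dagger(\xoverline{\mtx{M}}_\ell - \mtx{M}_\ell\sk{\mtx{Y}})$. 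Taking spectral norms, the first summand equals $\|(\sk{\Roo}_\ell)^{-1}\sk{\Rot}_\ell\|_2$, while the second is at most $\sigmaMin{\Roo_\ell}^{-1}$ times the spectral norm of the same residual already analyzed in the previous paragraph, namely $t(1-\delta)^{-1}\|\sk{\Rtt}_\ell\|_2$. A final use of the embedding yields $\sigmaMin{\Roo_\ell}^{-1} \leq (1+\delta)/(t\cdot\sigmaMin{\sk{\Roo}_\ell})$, so the two factors of $t$ cancel and \eqref{eq:skRRQRc} emerges.
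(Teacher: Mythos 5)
Your proof is correct, but it takes a genuinely different route from the paper's. The paper rescales $\mtx{S}$ so that its distortion equals its effective distortion, invokes \cref{thm:cond_of_A_pre} to trap the singular values of the preconditioned matrix $\pre{\mtx{M}}$ in $[(1+\delta)^{-1},(1-\delta)^{-1}]$, and then exploits the block-triangular product $\mtx{R}=\pre{\mtx{R}}\sk{\mtx{R}}$ — i.e.\ $\Roo_{\ell}=\pre{\Roo}_{\ell}\sk{\Roo}_{\ell}$, $\Rtt_{\ell}=\pre{\Rtt}_{\ell}\sk{\Rtt}_{\ell}$, and $\Rot_{\ell}=\pre{\Roo}_{\ell}\sk{\Rot}_{\ell}+\pre{\Rot}_{\ell}\sk{\Rtt}_{\ell}$ — together with singular-value product inequalities. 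You bypass $\pre{\mtx{M}}$ and $\pre{\mtx{R}}$ entirely: you identify $\sigmaSub{j}{\Roo_{\ell}}$ with $\sigmaSub{j}{\mtx{M}_{\ell}}$, recognize $\Rtt_{\ell}$ as (an orthonormal image of) the optimal least-squares residual $\xoverline{\mtx{M}}_{\ell}-\mtx{M}_{\ell}\mtx{Y}$, use the Pythagorean optimality of $\mtx{Y}$ to compare against the sketch-optimal coefficient $\sk{\mtx{Y}}$, and then sketch the resulting residual; the identity $\mtx{Y}=\sk{\mtx{Y}}+\mtx{M}_{\ell}^{\dagger}(\xoverline{\mtx{M}}_{\ell}-\mtx{M}_{\ell}\sk{\mtx{Y}})$ gives \eqref{eq:skRRQRc}. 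Your argument is more self-contained — it needs only the two thin QR factorizations and the embedding inequality, not the algorithm's intermediate quantities or \cref{thm:cond_of_A_pre} — and it carries a transparent statistical interpretation (swapping optimal regression coefficients for sketch-optimal ones can only inflate the residual, by an amount the embedding controls). The paper's route is more mechanical once the factorization $\mtx{R}=\pre{\mtx{R}}\sk{\mtx{R}}$ is available and reuses machinery already established for the preconditioner. One small caveat: your derivations of \eqref{eq:skRRQRb} and \eqref{eq:skRRQRc} presuppose that $\Roo_{\ell}$ and $\sk{\Roo}_{\ell}$ are invertible; this is harmless here since the statement of \eqref{eq:skRRQRc} already requires it (and invertibility of $\sk{\Roo}_{\ell}$ plus the embedding forces invertibility of $\Roo_{\ell}$), and the paper's own proof makes the same implicit assumption, but it is worth stating explicitly.
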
 
\begin{proof}		
        First, note that all of our claimed bounds are invariant under scaling of $\mtx{S}$ by positive constants.
        This means that we can assume $\mtx{S}$ is scaled ($\mtx{S} \leftarrow t\mtx{S}$ for some $t \neq 0$) so that its distortion and its effective distortion coincide.
		From here, it follows directly from the min-max principle and the subspace embedding property of $\mtx{S}$ that
		\begin{equation} \label{eq:skRRQR0}
		(1 - \distortion)^{1/2}\sigmaSub{j}{\mtx{M}} \leq \sigmaSub{j}{\mtx{S}\mtx{M}} \leq (1 + \distortion)^{1/2} \sigmaSub{j}{\mtx{M}}  \text{ for } 1 \leq j \leq n.
		\end{equation}
	Consider the matrix $\pre{\mtx{M}}$ computed in \cref{alg:CQRRPT}. 
		 We can interpret \cref{thm:cond_of_A_pre} with the definition of a subspace embedding to find that 
		\begin{equation}\label{eq:A_pre_singval_bounds}
		(1 + \distortion)^{-1/2}  \leq \sigmaSub{j}{\pre{\mtx{M}}} \leq (1 - \distortion)^{-1/2} \text{ for } 1 \leq j \leq k.
		\end{equation}
    Next, partition the matrix $\pre{\mtx{R}}$ from \cref{alg:CQRRPT} into a $2 \times 2$ block matrix in the way analogous to the partitioning of $\mtx{R}$ and $\sk{\mtx{R}}$, and recall the definition $\mtx{R} = \pre{\mtx{R}} \sk{\mtx{R}}$.

    The bounds in \cref{eq:A_pre_singval_bounds} ensure that $\sigma_{\min}\left( \pre{\Roo}_{\ell}\right) \geq  \sigma_{\min}\left( \pre{\mtx{R}}\right) = \sigma_{\min}\left( \pre{\mtx{M}}\right)  \geq  (1+\distortion)^{-1/2} $ and $\left\| \pre{\Rtt}_{\ell}\right\| \leq \left\| \pre{\mtx{R}}\right\| = \left\| \pre{\mtx{M}}\right\| \leq (1-\distortion)^{-1/2} $.
    Meanwhile, rote calculations let us express the blocks of $\mtx{R}$ as $ \Roo_{\ell} = \pre{\Roo}_{\ell} \sk{\Roo}_{\ell} $, $\Rtt_{\ell} = \pre{\Rtt}_{\ell} \sk{\Rtt}_{\ell} $ and $\Rot_{\ell} = \pre{\Roo}_{\ell} \sk{\Rot}_{\ell} + \pre{\Rot}_{\ell} \sk{\Rtt}_{\ell}$.
    Consequently,
		\small 
		\begin{subequations} \label{eq:skRRQR1}
		\begin{align*} 	
		 \sigmaSub{j}{\sk{\Roo}_{\ell}} &\leq  \sigmaMin{\pre{\Roo}_{\ell}}^{-1} \sigmaSub{j}{\pre{\Roo}_{\ell} \sk{\Roo}_{\ell}} =  \sigmaMin{\pre{\Roo}_{\ell}}^{-1} \sigmaSub{j}{\Roo_{\ell}} \leq  (1+\distortion)^{1/2} \sigmaSub{j}{\Roo_{\ell}}, \\
		  \sigmaSub{j}{\sk{\Rtt}_{\ell}} &\geq  \|\pre{\Rtt}_{\ell}\|_2^{-1} \sigmaSub{j}{\pre{\Rtt}_{\ell} \sk{\Rtt}_{\ell}} =  \| \pre{\Rtt}_{\ell}\|_2^{-1} \sigmaSub{j}{\Rtt_{\ell}} \geq    (1-\distortion)^{-1/2}  \sigmaSub{j}{\Rtt_{\ell}}. 
		\end{align*}
		\end{subequations}
		\normalsize 
		We can combine these inequalities with \cref{eq:skRRQR0} to easily get \cref{eq:skRRQRa,eq:skRRQRb}.	
		
            To show \cref{eq:skRRQRc}, we first notice that
		\small 
		\begin{align*}
              \|(\Roo_{\ell})^{-1}\Rot_{\ell}\|_2
                &= \|(\pre{\Roo}_{\ell} \sk{\Roo}_{\ell})^{-1}( \pre{\Roo}_{\ell} \sk{\Rot}_{\ell} + \pre{\Rot}_{\ell} \sk{\Rtt}_{\ell})\|_2 \\
                &\leq  \|(\sk{\Roo}_{\ell})^{-1} \sk{\Rot}_{\ell}\|_2 + \|(\sk{\Roo}_{\ell})^{-1} (\pre{\Roo}_{\ell})^{-1}\pre{\Rot}_{\ell} \sk{\Rtt}_{\ell}\|_2.
          \end{align*}
		\normalsize
		In turn, we have
		\small 
		\begin{align*}
		\|(\sk{\Roo}_{\ell})^{-1} (\pre{\Roo}_{\ell})^{-1}\pre{\Rot}_{\ell} \sk{\Rtt}_{\ell}\|_2 
		&\leq \sigmaMin{\sk{\Roo}_{\ell}}^{-1} \sigmaMin{ \pre{\mtx{R}}}^{-1} \|  \pre{\mtx{R}} \|_2 \| \sk{\Rtt}_{\ell} \|_2  \\
		 &\leq \left (\frac{1+\distortion}{1-\distortion} \right )^{1/2} \sigmaMin{\sk{\Roo}_{\ell}}^{-1} \| \sk{\Rtt}_{\ell} \|_2,
		\end{align*}
		\normalsize
		which finishes the proof.
\end{proof}

It is easy to prove \cref{thm:cqrrpt_RRQR,thm:cqrrpt_SRRQR} from here.
Start by using 
\cref{prop:effective_distortion} to express the bounds from \cref{thm:skRRQR} in terms of the restricted condition number for $\mtx{S}$ on $\range(\mtx{M})$.
Straightforward algebra shows that if \eqref{eq:RRQR_R11} and \eqref{eq:RRQR_R22} hold for $\mtx{X} = \mtx{S}\mtx{M}$, then \eqref{eq:skRRQRa} and \eqref{eq:skRRQRb} imply the claim of \cref{thm:cqrrpt_RRQR}.
Proving \cref{thm:cqrrpt_SRRQR} requires a slight detour to show that $\sigmaMin{\sk{\Roo}_{\ell}}^{-1} \| \sk{\Rtt}_{\ell}\|_2 \leq f_{\ell}^2$ follows from \eqref{eq:skRRQRa} and \eqref{eq:skRRQRb}.
Combine this new bound with \eqref{eq:skRRQRc} to see that if \eqref{eq:RRQR_R12} holds for $\mtx{X} = \mtx{S}\mtx{M}$ then the claim of \cref{thm:cqrrpt_SRRQR} follows.

\section{Technical numerical stability results}\label{app:numerical_stability_proofs}

Here we prove results from \cref{sec:finite_precision}, using notation first defined on page \pageref{page:numerical_stability_notation}.

\subsection{Proof of \cref{thm:stabchol}}\label{subapp:analysis:stabchol}
The relation~\cref{eq:CholQRprop2} follows directly from~\cite{yamamoto2015roundoff}. To show~\cref{eq:CholQRprop1}, notice that 
\begin{equation} \label{eq:CholQRproof1}
\|\mtx{M}_n - \fmtx{Q}_k \fmtx{R}_k\|_\mathrm{F} = \|\mtx{M}_n - \pre{\fmtx{M}}_k \sk{\trun{\fmtx{R}}}_k + \mtx{E}_1 \sk{\trun{\fmtx{R}}}_k + \fmtx{Q}_k \mtx{E}_2\|_\mathrm{F}, 
\end{equation}
where $\mtx{E}_1 = \pre{\fmtx{M}}_k - \fmtx{Q}_k \pre{\fmtx{R} }_k$ and 
$\mtx{E}_2 =  \fmtx{R}_k - \pre{\fmtx{R} }_k \sk{\trun{\fmtx{R}}}_k$. According to~\cite{yamamoto2015roundoff}, it holds that $\|\mtx{E}_1\|_\mathrm{F} \leq 8.4 \|\pre{\fmtx{M}}_k \|_2n^2\roundoff$. By combining this relation with~\cref{eq:CholQRprop2} we deduce that $\|\pre{\fmtx{R} }_k \|_\mathrm{F} \leq 1.01 \|\pre{\fmtx{M}}_k \|_\mathrm{F} $. Furthermore, by standard rounding bounds for matrix multiplication~\cite{Higham:2002:book}, we have $\|\mtx{E}_2\|_\mathrm{F} \leq \frac{n\roundoff}{1-n\roundoff} \|\pre{\fmtx{R} }_k\|_\mathrm{F} \|\sk{\trun{\fmtx{R}}}_k\|_\mathrm{F} $. Next, from~\cref{eq:CholQRprop0} it can be deduced that $\|\pre{\fmtx{M}}_k \sk{\trun{\fmtx{R}}}_k \|_2 \leq 1.01 \|\mtx{M}\|_2$, and therefore
\[
    \|\pre{\fmtx{M}}_k \|_2\|\sk{\trun{\fmtx{R}}}_k\|_2\leq  6 \cond(\pre{\fmtx{M}}_k)^{-1}  \|\pre{\fmtx{M}}_k \|_2\|\sk{\trun{\fmtx{R}}}_k\|_2\leq  6 \sigmaMin{\pre{\fmtx{M}}_k} \|\sk{\trun{\fmtx{R}}}_k\|_2\leq 6.06 \|\mtx{M}\|_2.
\]
By plugging the obtained bounds for $\|\mtx{E}_1\|_\mathrm{F}$, $\|\pre{\fmtx{R} }_k \|_\mathrm{F}$, $\|\mtx{E}_2\|_\mathrm{F}$  and $\|\pre{\fmtx{M}}_k \|_2\|\sk{\trun{\fmtx{R}}}_k\|_2$ to~\cref{eq:CholQRproof1}, we obtain 
\begin{align*}
\|\mtx{M}_n - \fmtx{Q}_k \fmtx{R}_k\|_\mathrm{F} - \|\mtx{M}_n - \pre{\fmtx{M}}_k \sk{\trun{\fmtx{R}}}_k\|_\mathrm{F} 
    &\leq  \|\mtx{E}_1\|_\mathrm{F} \|\sk{\fmtx{R}}_k\|_2+ \|\fmtx{Q}_k\|_2\|\mtx{E}_2\|_\mathrm{F} \\
& \leq 8.4 \|\pre{\fmtx{M}}_k \|_2n^2\roundoff \|\sk{\trun{\fmtx{R}}}_k \|_2+ 1.01 n\roundoff \|\pre{\fmtx{R} }_k\|_\mathrm{F} \|\sk{\trun{\fmtx{R}}}_k\|_\mathrm{F} \\
& \leq 51 n^{2} \roundoff \|\mtx{M}\|_2+ 1.03 n\roundoff \|\pre{\fmtx{M}}_k\|_\mathrm{F} \|\sk{\trun{\fmtx{R}}}_k\|_\mathrm{F} \\
&\leq (50.5 n^{2}\roundoff +6.3 n^{2} \roundoff)\|\mtx{M}\|_2,
\end{align*}
which completes the proof.

\subsection{Preconditioner stability}\label{subsec:stab_precond}

\subsubsection{Assumptions}
To characterize the numerical stability of CQRRPT's preconditioner, we need assumptions on the accuracy of the operations at Lines \ref{line:form_sk} to \ref{line:M_pre_def} of \cref{alg:CQRRPT} under finite precision arithmetic. 
We consider a scenario where Line \ref{line:M_pre_def}, which dominates the preconditioner's computational cost, is computed using unit roundoff $\roundoff$, whereas cheaper Lines \ref{line:form_sk} to \ref{line:k_def} are computed at a higher precision, using a roundoff $\tilde{\roundoff} = m^{-1} F(n)^{-1}\roundoff$, where $F(n)$ is a low-degree polynomial.
We use this computational model to illustrate a crucial property of the preconditioner: the ability to ensure numerical stability with a roundoff $\roundoff$ for dominant operations not constrained by $m$. This property is of interest for integrating CQRRPT into multi- or low-precision arithmetic architectures.
Clearly, the numerical stability of CQRRPT computed with two unit roundoffs also implies its stability when computed with a single roundoff.
Define,
\begin{equation} \label{eq:errmat2}
\begin{split}
\mtx{E}_1  &:= \mtx{S} \mtx{M}_k  - \sk{\fmtx{M}}_k, \\
\mtx{E}_2  &:= \sk{\fmtx{M}} - \sk{\fmtx{Q}}_k \sk{\fmtx{R}}_k  \\
\mtx{E}_3  &:= \mtx{M}_k - \pre{\fmtx{M}}_k \sk{\fmtx{A}}_k \\
\mtx{E}_4  &:= \mtx{S} \xoverline{\mtx{M}}_k - \sk{\xoverline{\fmtx{M}}}_k, 
\end{split}
\end{equation}
where $\xoverline{\mtx{M}}_k := \mtx{M}[\fslice,J[\tslice{k+1}n]] \quad\text{and}\quad \sk{\xoverline{\mtx{M}}}_k:= \sk{\mtx{M}}[\fslice,J[\tslice{k+1}n]]$.

We require that several bounds hold on these matrices.
The choice for these bounds begins with a base assumption that the unit roundoff $\roundoff$ and the dimensions $n$ and $k$ satisfy
\begin{subequations} \label{eq:ass200} 
	\begin{equation} \label{eq:ass20} 
	   \roundoff \leq 0.001 n^{-\frac{3}{2}} k^{-\frac{5}{2}}.
	\end{equation}
    With that, the first round of bounds that we discuss are 
    \begin{align}
    	&\|{\mtx{E}_1}[\fslice{},j]\|_2 \leq 0.01k^{-\frac{1}{2}}\roundoff   \|\mtx{M}_k[\fslice{},j] \|_2 \text{ for $1\leq j \leq k$,}  &  \label{eq:ass21} \\
    	&\|{\mtx{E}_2}[\fslice{},j]\|_2 \leq 0.01  k^{-\frac{1}{2}}\roundoff   \|\sk{\fmtx{M}}[\fslice{},j] \|_2  \text{ for $1\leq j \leq n$, ~~~and } &\|{\sk{\fmtx{Q}}_k}^{\trans} {\sk{\fmtx{Q}}_k} - \mtx{I} \|_\mathrm{F} \leq 0.1 \roundoff.  \label{eq:ass22}
	\end{align}
    It is easy to ensure that these conditions hold.
    To begin, we note that the classical worst-case rounding analysis ensures  
    $|\mtx{E}_1| \leq \frac{m \tilde{\roundoff}}{1-m \tilde{\roundoff}}  | \mtx{S} ||\mtx{M}_k|$.
    % ^ We define elementwise absolute value in \S 1.6.
    This tells us that \cref{eq:ass21} can be achieved if $\tilde{\roundoff} = \mathcal{O}(m^{-1} n^{-1} \roundoff)$ and $\|\mtx{S} \|_{\mathrm{F}} = \mathcal{O}(\sqrt{m})$.
    Meanwhile, the condition~\cref{eq:ass22} can be achieved by using any stable QRCP factorization executed in sufficient precision.
    This for instance includes the Householder QR with unit roundoff $\tilde{\roundoff}=\mathcal{O}(n^{-1}d^{-\frac{3}{2}}\roundoff)$ or Givens QR with unit roundoff $\tilde{\roundoff}=\mathcal{O}(n^{-1}d^{-\frac{1}{2}}\roundoff)$~\cite{Higham:2002:book}.

    Our next round of assumptions is more substantial.
	In particular, assume that the output of the black-box \code{qrcp} function satisfies the rank-revealing properties
	\begin{align}
    	&\sigmaSub{\mathrm{min}}{\sk{\fmtx{A}}_k} \geq 0.5 n^{-\frac{1}{2}} k^{-\frac{1}{2}} \sigmaSub{k}{\sk{\fmtx{M}}},  &\|\sk{\fmtx{C}}_{k-1}\|_2 \leq 2n^\frac{1}{2} k^\frac{1}{2} \sigmaSub{k}{\sk{\fmtx{M}}},  \label{eq:ass25}	\\
    	&\|(\sk{\fmtx{A}}_k)^{-1} \sk{\fmtx{B}}_k \|_\mathrm{F}  \leq 2 n^\frac{1}{2} k^\frac{1}{2}. &  \label{eq:ass26}
    \end{align}
     These conditions can be achieved with the strong rank-revealing QR method from~\cite{GE:1996} with unit roundoff $\tilde{\roundoff}=\mathcal{O}(n^{-1}d^{-\frac{1}{2}}\roundoff)$.
     The method from~\cite{GE:1996} contains an extra parameter $f$ that in our case should be taken as, say, $1.5$.
     Then the $\mathtt{qrcp}$ subroutine on Line \ref{line:qrcp_sk} will take a negligible amount $\mathcal{O}(d n^2\log n)$ of flops and satisfy~\cref{eq:ass25,eq:ass26}.
     It is important to note that in practical applications, the condition \cref{eq:ass25,eq:ass26} is expected to hold also for traditional QRCP with max-norm column pivoting.
     However, there are a few exceptional cases in which the traditional QRCP would not satisfy~\cref{eq:ass26}.
 
	Next, according to \cite[Theorem 8.5]{Higham:2002:book}, we have $\pre{\fmtx{M}}_k[j,\fslice{}] (\sk{\fmtx{A}}_k + \Delta \Roo^{(j)}) = \mtx{M}_k{[j,\fslice{}]} $ with $|\Delta \Roo^{(j)}| \leq 1.1\roundoff{}k |\sk{\fmtx{A}}_k|$, which implies that $|\mtx{E}_3|$ is bounded by $1.1\roundoff{}k |\pre{\fmtx{M}}_k||\sk{\fmtx{A}}_k |$ and leads to the following condition
	\begin{align}
    	|{\mtx{E}_3}[:,j]| \leq 1.1 k \roundoff  | \pre{\fmtx{M}}_k| | \sk{\fmtx{A}}[\fslice{},j]| \text{ for $1\leq j \leq k$.}~~~~~~~~
    	\label{eq:ass23}
    	\\
    	\intertext{Furthermore, by the standard rounding analysis we have $|\mtx{E}_4|\leq \frac{m \tilde{\roundoff}}{1-m \tilde{\roundoff}}  | \mtx{S} ||\xoverline{\mtx{M}}_k|, $ which implies}
    	\|{\mtx{E}_4}[:,j]\|_2 \leq 0.1  n^{-\frac{1}{2}} \roundoff{}\|\xoverline{\mtx{M}}_k[\fslice{},j] \|_2 \text{ for $1\leq j \leq n-k$,}   ~~ \label{eq:ass24}
	\end{align}
	if $\tilde{\roundoff} = \mathcal{O}(m^{-1} n^{-1} \roundoff)$ and $\|\mtx{S} \|_{\mathrm{F}} = \mathcal{O}(\sqrt{m})$.
\end{subequations}

With the computational model based on the assumptions~\cref{eq:ass200}, we can now establish a numerical characterization of the CQRRPT preconditioner.

\subsubsection{Main result}\label{subsubsec:thm:stabprecond}
\cref{thm:stabprecond} is our main numerical stability result.
It guarantees stability of CQRRPT's preconditioner under the condition that the truncation error associated with the preceding index $k-1$ is greater than the threshold value $G(n,k)\roundoff$, where $G(n,k)$ is a low-degree polynomial.
Consequently, by using the largest $k$ that satisfies this condition, we will have a factorization $\pre{\fmtx{M}}_k \sk{\fmtx{R}}_k$, which is within a distance of $G(n,k)\roundoff$ from $\mtx{M}_n$ and for which $\pre{\fmtx{M}}_k$ is well-conditioned.
 
\begin{theorem} \label{thm:stabprecond}
    Consider~\cref{alg:CQRRPT} where Line 5 is executed with unit roundoff $\roundoff \leq 0.001n^{-\frac{3}{2}}k^{-\frac{5}{2}}$. Assume that the other lines are executed with unit roundoff $\tilde{\roundoff} = m^{-1}F(n)^{-1} \roundoff$, where $F(n)$ is some low-degree polynomial, and $\|\mtx{S}\|_\mathrm{F}  =\mathcal{O}(\sqrt{m})$, so that the conditions~\cref{eq:ass200} hold.  Assume that $k$ satisfies
	\[ \small 1000 n^{\frac{3}{2}}k^{\frac{5}{2}}\roundoff{}\leq \frac{\|\sk{\fmtx{C}}_{k-1} \|_\mathrm{F}}{\|\sk{\fmtx{R}} \|_2}.\]
	If $\mtx{S}$ is an $\distortion$-embedding for $\range(\mtx{M})$ with $\distortion \leq 1/2$, then
	\begin{subequations}
		\begin{align} 
		\small 
		\frac{\|\mtx{M}_n - \pre{\fmtx{M}}_k \sk{\fmtx{R}}_k\|_\mathrm{F}}{\|\mtx{M} \|_\mathrm{F}} \leq 2 \frac{\|\sk{\fmtx{C}}_k \|_\mathrm{F}}{\|\sk{\fmtx{R}} \|_2} + 10 n k \roundoff \label{eq:main4} \\ 
		0.8  \leq \sigmaMin{\pre{\fmtx{M}}_k} \leq \sigmaMax{\pre{\fmtx{M}}_k}  \leq 1.44.  \label{eq:main5}
		\end{align}
	\end{subequations}
\end{theorem}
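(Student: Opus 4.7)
My plan is to follow the four-step roadmap laid out in~\cref{subsec:stability_cor_proof_overview}, making the constants explicit via the assumptions collected in~\cref{eq:ass200} so that the concrete bounds in~\cref{eq:main4} and~\cref{eq:main5} emerge at the end. The proof naturally splits into two parts: controlling the conditioning of $\pre{\fmtx{M}}_k$ together with the reconstruction error on the leading block $\mtx{M}_k$, and bounding the reconstruction error on the trailing block $\xoverline{\mtx{M}}_k$.

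First I would use the selection criterion $1000 n^{3/2} k^{5/2} \roundoff{} \leq \|\sk{\fmtx{C}}_{k-1}\|_\mathrm{F}/\|\sk{\fmtx{R}}\|_2$ together with the rank-revealing bounds~\cref{eq:ass25} to show $\cond(\sk{\fmtx{A}}_k) \leq G_7(n,k)\roundoff{}^{-1}$ for a low-degree polynomial $G_7$. Specifically, the lower bound on $\sigmaMin{\sk{\fmtx{A}}_k}$ in~\cref{eq:ass25} is controlled by $\sigmaSub{k}{\sk{\fmtx{M}}}$, which in turn is controlled by $\|\sk{\fmtx{C}}_{k-1}\|$ via the same inequality applied at index $k-1$; the trivial upper bound $\|\sk{\fmtx{A}}_k\|_2 \leq \|\sk{\fmtx{R}}\|_2$ completes the ratio. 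Propagating this bound through the $\delta$-embedding property of $\mtx{S}$ and the forward-error bounds~\cref{eq:ass21}--\cref{eq:ass22} along the chain $\cond(\sk{\fmtx{A}}_k) \gtrsim \cond(\sk{\fmtx{Q}}_k\sk{\fmtx{A}}_k) \gtrsim \cond(\sk{\fmtx{M}}_k) \gtrsim \cond(\mtx{S}\mtx{M}_k)$ then yields $\cond(\mtx{M}_k) \leq G_7(n,k)\roundoff{}^{-1}$ up to a factor $(1+\delta)/(1-\delta) \leq 5/3$.

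Second, I would invoke Theorem~5.2 of~\cite{Balabanov:2022:cholQR}, viewing $\pre{\fmtx{M}}_k$ as produced from $\mtx{M}_k$ by sketched CholeskyQR with preconditioner $\sk{\fmtx{A}}_k$. Since $\cond(\mtx{M}_k) \lesssim G_7(n,k)\roundoff{}^{-1}$, that theorem supplies both the leading-block reconstruction bound $\|\mtx{M}_k - \pre{\fmtx{M}}_k\sk{\fmtx{A}}_k\| \leq G_8(k)\roundoff{}\|\mtx{M}_k\|$ and the condition number bound $\cond(\pre{\fmtx{M}}_k) \leq (1+\delta)/(1-\delta)$; combined with~\cref{thm:cond_of_A_pre} to pin down the overall scale through $\mtx{S}$, this tightens to the interval $[0.8, 1.44]$ claimed in~\cref{eq:main5} when $\delta \leq 1/4$. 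Third, I would reproduce the chain~\cref{eq:simple_stability5} quantitatively for the trailing block: inserting $\pm\mtx{M}_k (\sk{\fmtx{A}}_k)^{-1}\sk{\fmtx{B}}_k$ and using the triangle inequality produces the correction term $\|\mtx{E}_3 (\sk{\fmtx{A}}_k)^{-1}\sk{\fmtx{B}}_k\| \leq 2n^{1/2}k^{1/2}\|\mtx{E}_3\|$ via the strong RRQR bound~\cref{eq:ass26}; the $\delta$-embedding inequality transfers the residual to the sketched side; the bounds~\cref{eq:ass21} and~\cref{eq:ass24} swap $\mtx{S}\mtx{M}_k$ and $\mtx{S}\xoverline{\mtx{M}}_k$ for $\sk{\fmtx{M}}_k$ and $\sk{\xoverline{\fmtx{M}}}_k$; and finally~\cref{eq:ass22} together with the near-identity $\sk{\fmtx{M}}_k \approx \sk{\fmtx{Q}}_k\sk{\fmtx{A}}_k$ reduces the sketched residual to $\|\sk{\fmtx{C}}_k\|$ plus a rounding tail of size $\mathcal{O}(n^{1/2}k^{1/2}\roundoff{}\|\sk{\fmtx{R}}\|)$. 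Normalizing by $\|\mtx{M}\|$ and using the $\delta$-embedding once more to compare $\|\sk{\fmtx{M}}\|$ with $\|\mtx{M}\|$ gives the trailing bound; combining with the leading bound via $\|\mtx{M}_n - \pre{\fmtx{M}}_k\sk{\fmtx{R}}_k\|_\mathrm{F}^2 = \|\mtx{M}_k - \pre{\fmtx{M}}_k\sk{\fmtx{A}}_k\|_\mathrm{F}^2 + \|\xoverline{\mtx{M}}_k - \pre{\fmtx{M}}_k\sk{\fmtx{B}}_k\|_\mathrm{F}^2$ yields~\cref{eq:main4}.

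The hardest part will be the bookkeeping needed to tighten everything into the explicit constants $2$ and $10$ in~\cref{eq:main4} and $0.8, 1.44$ in~\cref{eq:main5}. Each step in the chain introduces either a multiplicative factor of at most $(1+\delta)/(1-\delta) \leq 5/3$ or an additive rounding correction proportional to $nk\roundoff{}\|\mtx{M}\|$. The threshold hypothesis $1000 n^{3/2}k^{5/2}\roundoff{} \leq \|\sk{\fmtx{C}}_{k-1}\|_\mathrm{F}/\|\sk{\fmtx{R}}\|_2$ on the truncation index is engineered precisely so that the additive corrections, after being divided through by $\|\sk{\fmtx{R}}\|$, cannot dominate the leading $\|\sk{\fmtx{C}}_k\|/\|\sk{\fmtx{R}}\|$ contribution — this is where the strong RRQR assumption~\cref{eq:ass26} is essential, as it prevents the factor $\|(\sk{\fmtx{A}}_k)^{-1}\sk{\fmtx{B}}_k\|$ from blowing up the propagated error terms. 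The base roundoff assumption~\cref{eq:ass20} on $\roundoff{}$ then absorbs the residual polynomial dependence on $(n,k)$ into the final constants.
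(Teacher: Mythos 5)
Your proposal follows essentially the same route as the paper's proof: the conditioning of $\mtx{M}_k$ via the truncation criterion and the RRQR bounds (the paper's \cref{thm:stabprecond1}), the appeal to Theorem~5.2 of~\cite{Balabanov:2022:cholQR} for the leading block and the singular-value interval (the paper's \cref{thm:RCholQR}), and the error chain with the strong-RRQR factor $\|(\sk{\fmtx{A}}_k)^{-1}\sk{\fmtx{B}}_k\| \leq 2\sqrt{nk}$ controlling the propagated terms for the trailing block. The only cosmetic difference is that you combine the two block errors with the Frobenius Pythagorean identity where the paper uses the triangle inequality; both are valid.
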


Our proof of \cref{thm:stabprecond} uses two intermediate propositions that provide further insights into the stability of the CQRRPT preconditioner.
First, \cref{thm:stabprecond1} concerns how the permutation obtained on Line \ref{line:qrcp_sk} of CQRRPT effectively bounds the condition number of the submatrix $\mtx{M}_k$.  
Second, \cref{thm:RCholQR} involves examining the factors $\pre{\fmtx{M}}_k$ and $\sk{\fmtx{A}}_k$ as an unpivoted ``sketched CholeskyQR'' factorization of $\mtx{M}_k$, which enables us to leverage a result from~\cite{Balabanov:2022:cholQR}.

\begin{proposition} \label{thm:stabprecond1}
 Consider~\cref{alg:CQRRPT} where Line 5 is executed  with unit roundoff $\roundoff \leq 0.001n^{-\frac{3}{2}}k^{-\frac{5}{2}}$ and other lines are executed with unit roundoff $\tilde{\roundoff} = m^{-1}F(n)^{-1} \roundoff$, where $F(n)$ is some low-degree polynomial, and $\|\mtx{S}\|_\mathrm{F}  =\mathcal{O}(\sqrt{m})$, so that the conditions~\cref{eq:ass200} hold. Assume that $k$ is such that \[
        n^{\frac{3}{2}}k\roundoff{}\leq {\|\sk{\fmtx{C}}_{k-1} \|_\mathrm{F}}\|\sk{\fmtx{R}} \|_2^{-1}.
    \]
    If $\mtx{S}$ is an $\distortion$-embedding for $\mtx{M}_k$ with $\distortion \leq 1/2$, then we have 
    \[
        \cond(\mtx{M}_k) \leq 10 n^{\frac{3}{2}}k \|\sk{\fmtx{C}}_{k-1} \|_\mathrm{F}^{-1} \|\sk{\fmtx{R}} \|_2  \leq 2.5 \roundoff{}^{-1}.
    \]
	\begin{proof}
		 Denote $\|\sk{\fmtx{C}}_{k-1} \|_\mathrm{F}\|\sk{\fmtx{R}}_k \|_2^{-1}$ by $\tau_{k-1}$. Frame the assumptions~\cref{eq:ass200}. Notice that by~\cref{eq:ass25}, 
		\begin{equation} \label{eq:RRRCholeskyQR11}	
		\sigmaMin{\sk{\fmtx{A}}_k} \geq 4^{-1} n^{-1} k^{-1} \|\sk{\fmtx{C}}_{k-1} \|_2  \geq 4^{-1} n^{-\frac{3}{2}} k^{-1} \tau_{k-1} \|\sk{\fmtx{A}}_k \|_2.
		\end{equation}	
		Thus, it is deduced that
		\begin{equation} \label{eq:RRRCholeskyQR112}	
		\cond(\sk{\fmtx{A}}_k) \leq 4 n^{\frac{3}{2}}k \tau_{k-1}^{-1}\leq \roundoff{}^{-1}.
		\end{equation}
		
		Furthermore, by~\cref{eq:ass22} we have 
		\begin{equation} \label{eq:RRRCholeskyQR12}	
		\|\sk{\fmtx{M}}_k\|_2 \leq \|\sk{\fmtx{Q}}_k\|_2 \|\sk{\fmtx{A}}_k\|_2+ \|\mtx{E}_2[:,1\fslice{k}]\|_2 \leq 1.01 \|\sk{\fmtx{A}}_k\|_2,
		\end{equation}
		and 
		\begin{equation} \label{eq:RRRCholeskyQR13}	
		\begin{split}
		\small 
		\sigmaMin{\sk{\fmtx{M}}_k} &\geq \sigmaMin{\sk{\fmtx{Q}}_k} \sigmaMin{\sk{\fmtx{A}}_k} - \|\mtx{E}_2[:,1\fslice{k}]\|_2 \\
		&\geq 0.99 \sigmaMin{\sk{\fmtx{A}}_k} - 0.01u  \|\sk{\fmtx{M}}_k \|_2  
		 \geq  0.99 \sigmaMin{\sk{\fmtx{A}}_k} - 0.011\roundoff \|\sk{\fmtx{A}}_k \|_2 \\
		& \geq  \sigmaMin{\sk{\fmtx{A}}_k} \left (0.99 - 0.011\roundoff{}\cond(\sk{\fmtx{A}}_k) \right ) \geq  0.97  \sigmaMin{\sk{\fmtx{A}}_k}.
		\end{split}
		\end{equation}
		Consequently,
		\begin{equation} \label{eq:RRRCholeskyQR14}	
		\cond(\sk{\fmtx{M}}_k) \leq 4.2 n^{\frac{3}{2}} k \tau_{k-1}^{-1} \leq 1.05 \roundoff{}^{-1}. 
		\end{equation}			
		Next, by~\cref{eq:ass21} we get 	
		\begin{equation} \label{eq:RRRCholeskyQR15}			
		\|\mtx{S} \mtx{M}_k \|_2 \leq \|\sk{\fmtx{M}}_k \|_2 + \|\mtx{E}_1 \|_2 \leq 1.01 \|\sk{\fmtx{M}}_k\|_2, 
		\end{equation}
		which due to the $\distortion$-embedding property of $\mtx{S}$ implies that 
		\begin{equation} \label{eq:RRRCholeskyQR16}			
		\| \mtx{M}_k \|_2 \leq 1.5 \| \sk{\fmtx{M}}_k \|_2.
		\end{equation}			
		We also have by~\cref{eq:ass21,eq:RRRCholeskyQR14},
		\begin{equation} \label{eq:RRRCholeskyQR17}			
		\begin{split}
		\small 
		\sigmaMin{\mtx{S} \mtx{M}_k} &\geq \sigmaMin{\sk{\fmtx{M}}_k} - \|\mtx{E}_1\|_2 \geq  \sigmaMin{\sk{\fmtx{M}}_k} - 0.01u {\|\mtx{M}_k \|_2}  \\ 
		& \geq  \sigmaMin{\sk{\fmtx{M}}_k} -0.015u \|\sk{\fmtx{M}}_k\|_2 
		\geq  0.92 \sigmaMin{\sk{\fmtx{M}}_k}.
		\end{split}
		\end{equation}			
		Consequently, by the $\distortion$-embedding property of $\mtx{S}$ and~\cref{eq:RRRCholeskyQR16,eq:RRRCholeskyQR17},
		\begin{equation*}
		\cond( \mtx{M}_k) \leq \sqrt{\frac{1+\distortion}{1-\distortion}}\cond(\mtx{S} \mtx{M}_k) \leq 1.91 \cond(\sk{\fmtx{M}}_k) \leq  10 n^{\frac{3}{2}} k \tau_{k-1}^{-1} \leq 2.5 u^{-1},
		\end{equation*}
		which finishes the proof.		
	\end{proof} 
\end{proposition}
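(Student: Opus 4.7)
The plan is to bound $\cond(\mtx{M}_k)$ by a chain of inequalities that pulls condition-number estimates through CQRRPT's computational pipeline in reverse order. Since $\mtx{S}$ is a $\delta$-embedding on $\range(\mtx{M}_k)$ with $\delta \leq 1/4$, the $\ell_2$ min--max principle gives $\cond(\mtx{M}_k) \leq \tfrac{1+\delta}{1-\delta}\cond(\mtx{S}\mtx{M}_k)$, which reduces the problem to bounding $\cond(\mtx{S}\mtx{M}_k)$. From there I would move inward, first transferring the bound to the computed sketch $\sk{\fmtx{M}}_k$ via the finite-precision assumption \cref{eq:ass21} on $\mtx{E}_1 = \mtx{S}\mtx{M}_k - \sk{\fmtx{M}}_k$, and then to the leading RRQR block $\sk{\fmtx{A}}_k$ via the near-orthonormality of $\sk{\fmtx{Q}}_k$ in \cref{eq:ass22} applied to $\mtx{E}_2 = \sk{\fmtx{M}} - \sk{\fmtx{Q}}_k\sk{\fmtx{R}}_k$. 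At each such step, Weyl-type inequalities $\sigmaMin{\mtx{A}+\mtx{E}} \geq \sigmaMin{\mtx{A}} - \|\mtx{E}\|_2$ (and their maxima analogues) suffice provided the perturbation is small relative to $\sigmaMin{\mtx{A}}$.

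The crux is the direct bound on $\cond(\sk{\fmtx{A}}_k)$, which I would extract from the rank-revealing assumptions \cref{eq:ass25}. Those give $\sigmaMin{\sk{\fmtx{A}}_k} \gtrsim (nk)^{-1/2} \sigmaSub{k}{\sk{\fmtx{M}}}$, while the second RRQR bound, inverted, yields $\sigmaSub{k}{\sk{\fmtx{M}}} \gtrsim (nk)^{-1/2}\|\sk{\fmtx{C}}_{k-1}\|_2$; converting from $\|\sk{\fmtx{C}}_{k-1}\|_2$ to $\|\sk{\fmtx{C}}_{k-1}\|_\mathrm{F}$ costs a further $\sqrt{n}$ factor. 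Paired with the trivial estimate $\sigmaMax{\sk{\fmtx{A}}_k} \leq \|\sk{\fmtx{R}}\|_2$, this yields $\cond(\sk{\fmtx{A}}_k) \lesssim n^{3/2} k \|\sk{\fmtx{R}}\|_2 / \|\sk{\fmtx{C}}_{k-1}\|_\mathrm{F}$, and the hypothesis $n^{3/2}k\roundoff{} \leq \|\sk{\fmtx{C}}_{k-1}\|_\mathrm{F}/\|\sk{\fmtx{R}}\|_2$ then pins $\cond(\sk{\fmtx{A}}_k) \leq \mathcal{O}(\roundoff{}^{-1})$. Propagating this forward through the two perturbation steps and the $\delta$-embedding inequality produces the advertised estimate $\cond(\mtx{M}_k) \leq 10 n^{3/2} k \|\sk{\fmtx{R}}\|_2 / \|\sk{\fmtx{C}}_{k-1}\|_\mathrm{F}$, which the same hypothesis rounds into the absolute form $\cond(\mtx{M}_k) \leq 2.5\,\roundoff{}^{-1}$.

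The hard part will be controlling the two perturbation steps without letting rounding errors dominate. The additive errors $\|\mtx{E}_1\|_2$ and $\|\mtx{E}_2\|_2$ are $\mathcal{O}(\roundoff{})$ relative to $\|\mtx{M}_k\|_2$ and $\|\sk{\fmtx{M}}\|_2$ respectively, but $\sigmaMin{\sk{\fmtx{A}}_k}$ can itself be as small as $\Theta(\roundoff{})\|\sk{\fmtx{R}}\|_2$ --- the very same order. To prevent the perturbations from swamping the smallest singular values, one must feed the intermediate bound $\cond(\sk{\fmtx{A}}_k) \leq \mathcal{O}(\roundoff{}^{-1})$ back into each Weyl step, and use the norm comparisons (e.g.\ $\|\mtx{M}_k\|_2 \leq (1-\delta)^{-1}\|\mtx{S}\mtx{M}_k\|_2$ and $\|\sk{\fmtx{M}}_k\|_2 \lesssim \|\sk{\fmtx{A}}_k\|_2$) to re-express each error in terms of the same reference norm. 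The ceiling $\roundoff{} \leq 0.001 n^{-3/2} k^{-5/2}$ from \cref{eq:ass20} is precisely what makes these error fractions strictly less than $1$, keeping the accumulated constants under control so that the final $10$ and $2.5$ emerge at the end rather than something blowing up polynomially.
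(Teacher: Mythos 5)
Your proposal follows the paper's own argument essentially step for step: the chain $\cond(\mtx{M}_k) \lesssim \cond(\mtx{S}\mtx{M}_k) \lesssim \cond(\sk{\fmtx{M}}_k) \lesssim \cond(\sk{\fmtx{A}}_k)$, the extraction of $\cond(\sk{\fmtx{A}}_k) \lesssim n^{3/2}k\,\|\sk{\fmtx{R}}\|_2/\|\sk{\fmtx{C}}_{k-1}\|_\mathrm{F}$ from the two RRQR bounds in \cref{eq:ass25} plus the $\sqrt{n}$ Frobenius conversion, and the feedback of the intermediate bound $\cond(\sk{\fmtx{A}}_k)=\mathcal{O}(\roundoff{}^{-1})$ into each Weyl perturbation step are all exactly what the paper does. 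The only difference is presentational (the paper establishes the innermost bound first and works outward), so the proposal is correct and matches the paper's approach.
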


\begin{proposition}[Corollary of Theorem 5.2 from~\cite{Balabanov:2022:cholQR}] \label{thm:RCholQR} 	
Consider~\cref{alg:CQRRPT} where Line 5 is executed with unit roundoff $\roundoff \leq 0.01 n^{-\frac{3}{2}} \cond(\mtx{M}_k)^{-1}$ and other lines are executed with unit roundoff $\tilde{\roundoff} = m^{-1} F(k)^{-1} \roundoff$, where $F(k)$ is some low-degree polynomial, and $\|\mtx{S}\|_\mathrm{F}  =\mathcal{O}(\sqrt{m})$, so that the assumptions~\cref{eq:ass200} hold. If $\mtx{S}$ is an $\distortion$-embedding for $\mtx{M}_k$ with $\distortion\leq 1/2$ then
	\begin{align}	
	\mtx{M}_k +{\Delta} \mtx{M} &= \pre{\fmtx{M}}_k \sk{\fmtx{A}}_k  \label{eq:main1} \\
	\intertext{with $\|{\Delta} \mtx{M}_k{[\fslice{},j]}\|_2\leq  2.1 k \roundoff \|\mtx{M}{[\fslice{},j]}\|_2 $ for $1 \leq j \leq k$. Furthermore, we have}
	(1+\distortion)^{-1/2} - 4 k^\frac{3}{2} \roundoff  \cond(\mtx{M}_k) \leq \sigmaMin{\pre{\fmtx{M}}_k}&\leq \sigmaMax{\pre{\fmtx{M}}_k}  \leq (1-\distortion)^{-1/2} + 4 k^\frac{3}{2} \roundoff \cond(\mtx{M}_k).\label{eq:main2}
	\end{align}
	\normalsize
\end{proposition}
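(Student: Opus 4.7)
The plan is to derive this as a direct translation of Theorem 5.2 of~\cite{Balabanov:2022:cholQR}. Lines 2, 3, and 5 of~\cref{alg:CQRRPT}, restricted to the columns of $\mtx{M}$ indexed by $J[\lslice{k}]$, together comprise exactly the sketched-preconditioning portion of the sketched CholeskyQR pipeline analyzed in that reference: we sketch $\mtx{M}_k$ to get $\sk{\fmtx{M}}_k$, QR-factor the sketch as $\sk{\fmtx{Q}}_k \sk{\fmtx{A}}_k$, and then form $\pre{\fmtx{M}}_k = \mtx{M}_k (\sk{\fmtx{A}}_k)^{-1}$ by \code{TRSM}. My proposal is therefore to verify that the hypotheses of~\cite[Theorem 5.2]{Balabanov:2022:cholQR} are met by our setup and quote its conclusions.

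First I would unpack~\cref{eq:ass200}. The bounds~\cref{eq:ass21} and~\cref{eq:ass24} on $\mtx{E}_1,\mtx{E}_4$ give the backward error of the sketching step; the bound~\cref{eq:ass22} on $\mtx{E}_2$ together with the near-orthogonality $\|\sk{\fmtx{Q}}_k^{\trans}\sk{\fmtx{Q}}_k - \mtx{I}\|_{\mathrm{F}} \leq 0.1\roundoff{}$ gives the backward error and column-orthogonality required of the QR of the sketch; and~\cref{eq:ass23} gives the columnwise backward error bound $|\mtx{E}_3[\fslice,j]|\leq 1.1k\roundoff{}|\pre{\fmtx{M}}_k||\sk{\fmtx{A}}_k[\fslice,j]|$ for \code{TRSM}. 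The rank-revealing bound~\cref{eq:ass25} together with the assumed smallness $\roundoff{} \leq 0.01 n^{-3/2}\cond(\mtx{M}_k)^{-1}$ ensures $\sk{\fmtx{A}}_k$ is numerically invertible, and the $\delta$-embedding property with $\delta \leq \tfrac{1}{4}$ provides the final ingredient. These are precisely the hypotheses of~\cite[Theorem 5.2]{Balabanov:2022:cholQR}. Invoking that theorem directly produces~\cref{eq:main1}: one writes $\pre{\fmtx{M}}_k \sk{\fmtx{A}}_k = \mtx{M}_k + \Delta \mtx{M}$, and the columnwise bound from~\cref{eq:ass23}, combined with $\|\pre{\fmtx{M}}_k\|_2 \|\sk{\fmtx{A}}_k[\fslice,j]\|_2 \lesssim \|\mtx{M}_k[\fslice,j]\|_2$ (a consequence of \cref{thm:cond_of_A_pre} and the $\delta$-embedding hypothesis applied columnwise via $\sigmaMax{\pre{\fmtx{M}}_k} \leq (1-\delta)^{-1}$), converts $1.1k$ into the claimed $2.1k$ after absorbing a small slack from the $\mtx{E}_1,\mtx{E}_2$ errors.

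For~\cref{eq:main2}, I would proceed by treating $\pre{\fmtx{M}}_k$ as a perturbation of the ``ideal'' preconditioned matrix $\mtx{M}_k(\sk{\fmtx{A}}_k)^{-1}$. \cref{thm:cond_of_A_pre} and the $\delta$-embedding property imply that in exact arithmetic the singular values of the ideal preconditioned matrix lie in $[(1+\delta)^{-1},(1-\delta)^{-1}]$. Rearranging~\cref{eq:main1} gives $\pre{\fmtx{M}}_k = (\mtx{M}_k + \Delta\mtx{M})(\sk{\fmtx{A}}_k)^{-1}$, so by Weyl's inequality the shift from the exact spectrum is bounded by $\|\Delta\mtx{M}\|_2 \|(\sk{\fmtx{A}}_k)^{-1}\|_2$. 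Using the columnwise bound on $\Delta\mtx{M}$ we get $\|\Delta\mtx{M}\|_2 \leq 2.1 k^{3/2} \roundoff{} \|\mtx{M}_k\|_2$ (inflating by $\sqrt{k}$ from column summation), and the $\delta$-embedding property together with~\cref{eq:ass22} yields $\|(\sk{\fmtx{A}}_k)^{-1}\|_2 \lesssim (1-\delta)^{-1}\|\mtx{M}_k^{\dagger}\|_2$; multiplying gives the $4k^{3/2}\roundoff{}\cond(\mtx{M}_k)$ correction in~\cref{eq:main2}. The main obstacle here is tracking numerical constants tightly enough to land on exactly $2.1$ and $4$: the smallness assumption $\roundoff{} \leq 0.01 n^{-3/2}\cond(\mtx{M}_k)^{-1}$ is what lets the higher-order cross-terms (products of two rounding errors, and the contribution of $\mtx{E}_1,\mtx{E}_2$ to the backward error in~\cref{eq:main1}) be absorbed into these leading constants rather than forcing a worse bound.
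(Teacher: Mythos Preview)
Your proposal is correct and matches the paper's approach exactly: the paper provides no proof for this proposition at all, simply labeling it as a ``Corollary of Theorem 5.2 from~\cite{Balabanov:2022:cholQR}'' and immediately moving on to the proof of \cref{thm:stabprecond}. Your plan to verify that assumptions~\cref{eq:ass200} and the $\delta$-embedding hypothesis supply the needed inputs to that cited theorem, and then read off~\cref{eq:main1} and~\cref{eq:main2}, is precisely what the paper intends; the perturbation sketch you give for how the constants $2.1$ and $4$ arise is more detail than the paper itself supplies.
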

\normalsize

We are now ready to present the proof of~\cref{thm:stabprecond}.

\begin{proof}[Proof of~\cref{thm:stabprecond}]	
	Denote $\|\sk{\fmtx{C}}_{k} \|_\mathrm{F}\|\sk{\fmtx{R}}_k \|_2^{-1}$ by $\tau_{k}$. By~\cref{eq:ass21,eq:ass22,eq:ass24,eq:ass26} we have	
    \begin{small}
	\begin{equation} \label{eq:RRRCholeskyQR20}
	\begin{split} 
	\|\xoverline{\mtx{M}}_k  &- \pre{\fmtx{M}}_k \sk{\trun{\fmtx{B}}}_k \|_\mathrm{F} \leq \| \xoverline{\mtx{M}}_k  - \mtx{M}_k (\sk{\fmtx{A}}_k)^{-1} \sk{\fmtx{B}}_k\|_\mathrm{F} + \|\mtx{E}_3 (\sk{\fmtx{A}}_k)^{-1} \sk{\fmtx{B}}_k \|_\mathrm{F} \\
	&\leq (1-\distortion)^{-1/2} \|\mtx{S}(\xoverline{\mtx{M}}_k  - \mtx{M}_k (\sk{\fmtx{A}}_k)^{-1} \sk{\fmtx{B}}_k) \|_\mathrm{F} + \|\mtx{E}_3\|_\mathrm{F} \|(\sk{\fmtx{A}}_k)^{-1} \sk{\fmtx{B}}_k \|_2\\ &\leq (1-\distortion)^{-1/2} ( \|\mtx{E}_4 +  \mtx{E}_5 + \mtx{E}_6 + \mtx{E}_{7} \|_\mathrm{F} ) + 2n^\frac{1}{2} k^\frac{1}{2}\|\mtx{E}_3 \|_\mathrm{F},
	\end{split}
	\end{equation}
    \end{small}
	where $\mtx{E}_4:= \mtx{S} \xoverline{\mtx{M}}_k - \sk{\xoverline{\mtx{M}}}_k, \mtx{E}_5: = \sk{\xoverline{\mtx{M}}}_k - \sk{\fmtx{Q}}_k \sk{\fmtx{B}}_k, \mtx{E}_{6}:=(\sk{\fmtx{Q}}_k \sk{\fmtx{A}}_k-\sk{\fmtx{M}}_k) (\sk{\fmtx{A}}_k)^{-1} \sk{\fmtx{B}}_k,$ and $\mtx{E}_7:=( \sk{\fmtx{M}}_k - \mtx{S} \mtx{M}_k ) (\sk{\fmtx{A}}_k)^{-1} \sk{\fmtx{B}}_k$ satisfy the following bounds 
     \begin{small}
	\begin{align*}
	\|\mtx{E}_4 \|_\mathrm{F} &= \|\mtx{S} \xoverline{\mtx{M}}_k - \sk{\xoverline{\mtx{M}}}_k \|_\mathrm{F} \leq 0.1\roundoff{}\|\xoverline{\mtx{M}}_k \|_2  \\
	\|\mtx{E}_5 \|_\mathrm{F} &= \|\sk{\xoverline{\mtx{M}}}_k - \sk{\fmtx{Q}}_k \sk{\fmtx{B}}_k \|_\mathrm{F} \leq \|\sk{\fmtx{Q}}_k\|_2\|\sk{\fmtx{C}}_k \|_\mathrm{F}+ \|\mtx{E}_2 \|_\mathrm{F} \\ &\leq 1.01 {\tau_k} \|\sk{\fmtx{R}}_k\|_2 + \|\mtx{E}_2 \|_\mathrm{F} \leq (1+\distortion)^{1/2}(1.02 {\tau_k} + 0.01\sqrt{n}u )\|\mtx{M}\|_\mathrm{F} \\
	\|\mtx{E}_{6} \|_\mathrm{F} &= \|(\sk{\fmtx{Q}}_k \sk{\fmtx{A}}_k-\sk{\fmtx{M}}_k) (\sk{\fmtx{A}}_k)^{-1} \sk{\fmtx{B}}_k\|_\mathrm{F} \\
	& \leq \|\sk{\fmtx{Q}}_k \sk{\fmtx{A}}_k-\sk{\fmtx{M}}_k\|_\mathrm{F} \|(\sk{\fmtx{A}}_k)^{-1} \sk{\fmtx{B}}_k\|_2  \\ &\leq 2 n^\frac{1}{2} k^\frac{1}{2} \|\mtx{E}_2[:,1\fslice{k}] \|_\mathrm{F} \leq 0.02 n^\frac{1}{2} k^\frac{1}{2}\roundoff{}\|\sk{\fmtx{M}}_k \|_2 \leq 0.03 n\roundoff{}\|\mtx{M}_k \|_2
	\\
	\|\mtx{E}_7 \|_\mathrm{F} & = \|( \sk{\fmtx{M}}_k - \mtx{S} \mtx{M}_k ) (\sk{\fmtx{A}}_k)^{-1} \sk{\fmtx{B}}_k\|_2 
	\leq \|\sk{\fmtx{M}}_k - \mtx{S} \mtx{M}_k  \|_\mathrm{F} \|(\sk{\fmtx{A}}_k)^{-1} \sk{\fmtx{B}}\|_2 \\& \leq 2 n^\frac{1}{2} k^\frac{1}{2} \|\mtx{E}_1 \|_\mathrm{F} \leq 0.02 n\roundoff{}\| \mtx{M}_k \|_2.
	\end{align*}
    \end{small}
	By using the triangle inequality and the fact that $\distortion \leq 1/2$, we obtain
	\begin{equation} \label{eq:RRRCholeskyQR21}
	\|\xoverline{\mtx{M}}_k  - \pre{\fmtx{M}}_k \sk{\trun{\fmtx{B}}}_k \|_\mathrm{F} \leq \frac{4}{3} ( 0.2 n \roundoff + 1.02 {\textstyle\frac{5}{4}}{\tau_k} )\|\mtx{M}\|_\mathrm{F} + 2n^\frac{1}{2} k^\frac{1}{2}\|\mtx{E}_3 \|_\mathrm{F}.
	\end{equation}
	Furthermore, from~\cref{thm:stabprecond1} it follows that 
	\begin{equation} \label{eq:RRRCholeskyQR31}
	\cond(\mtx{M}_k) \leq 10 n^{\textstyle \frac{3}{2}} k {\tau_k}^{-1}.
	\end{equation}
	By looking at $\pre{\fmtx{M}}_k$ and $\sk{\fmtx{A}}_k$ as a sketched CholeskyQR factorization of $\mtx{M}_k$, according to~\cref{thm:RCholQR}, we have 
	\begin{subequations}
		\begin{align}	
		\|\mtx{E}_3 \|_\mathrm{F} = \|\mtx{M}_k - \pre{\fmtx{M}}_k \sk{\fmtx{A}}_k\|_\mathrm{F} &\leq 2.1 k \roundoff \|\mtx{M}_k \|_\mathrm{F} \label{eq:RRRmain1} \\
		(1+\distortion)^{-1/2} - 4 k^\frac{3}{2} \roundoff  \cond(\mtx{M}_k) \leq \sigmaMin{\pre{\fmtx{M}}_k}&\leq \sigmaMax{\pre{\fmtx{M}}_k}  \leq (1-\distortion)^{-1/2} + 4 k^\frac{3}{2} \roundoff  \cond(\mtx{M}_k) \label{eq:RRRmain2}
		\end{align}
	\end{subequations}
	
	By combing~\cref{eq:RRRmain1} with~\cref{eq:RRRCholeskyQR21} and using the triangle inequality we obtain~\cref{eq:main4}. By combining~\cref{eq:RRRmain2} with~\cref{eq:RRRCholeskyQR31} we obtain~\cref{eq:main5} and finish the proof. 	
\end{proof}

\section{More on numerical experiments}\label{sec:more_experiments}

\paragraph{Extremely tall matrices.}
% \label{sec:very_tal_mat}

The matrices used in \cref{sec:speed_experiments}'s experiments were tall, but not as tall as some readers might anticipate. 
Therefore here we consider matrices with one million rows and $32 \dots 16384$ columns.
\cref{CQRRPT_tall_speed} presents performance results (in canonical GFLOPs/second) for the algorithms considered before.
It shows that for very thin matrices ($n \leq 256$), CQRRPT may be inefficient compared to alternative algorithms.
This can be understood with performance profiling data in \cref{CQRRPT Inner Speed Fig 2}. Specifically, for these extremely tall and thin matrices, the runtime is dominated by sketching, and the time to pivot $\mtx{M}$ before preconditioning takes nearly as long as CholeskyQR.
As before, our experiments were run on the machine described in \cref{table:processor_config} using $48$ threads.

\begin{figure}[!h]
\minipage{0.425\textwidth}
    \centering
    \begin{overpic}[width=1.1\linewidth]{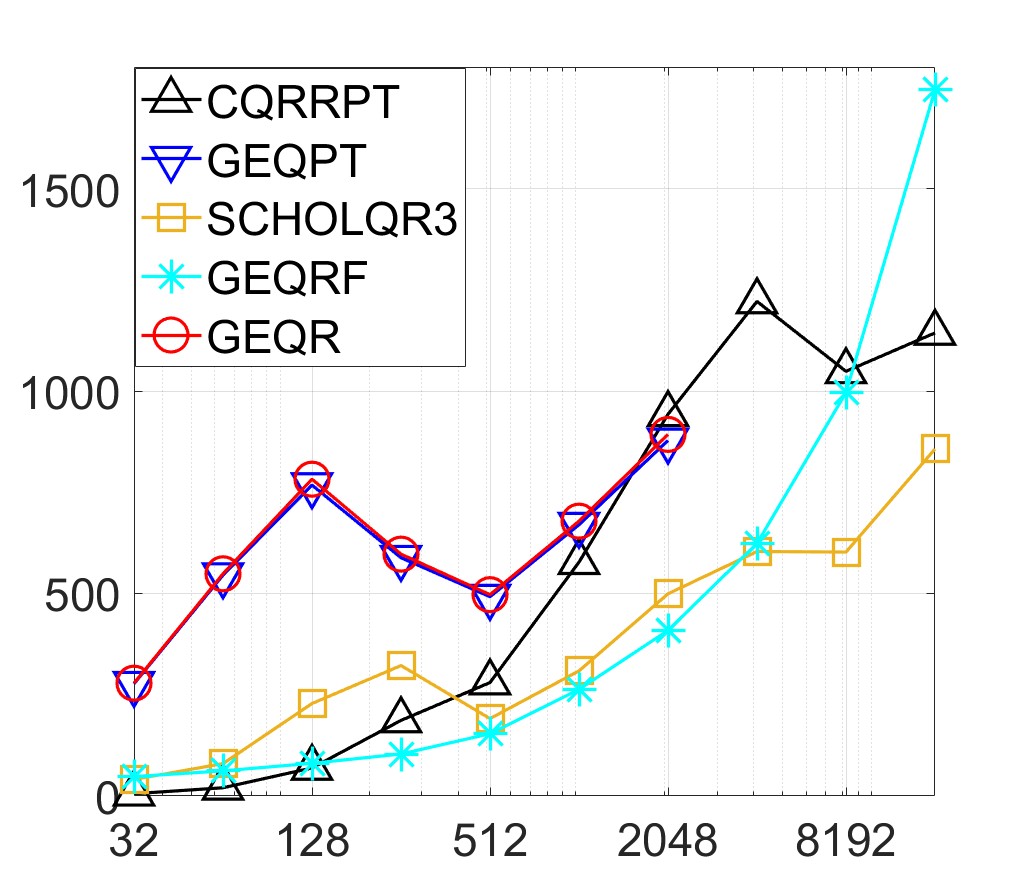}
    \put (-3, 45) {\rotatebox[origin=c]{90}{\textbf{GFLOP/s}}}
    \put (40, -3) {\textbf{columns} ($n$)}
    \end{overpic}
    \vspace{0.1cm}
    \captionof{figure}{\small QR schemes performance comparisons for matrices with one million rows and varying numbers of columns ($32, \ldots, 16384$). The plot does not depict \code{GEQR} and \code{GEQPT} results for the number of columns larger than $2048$, because of an internal overflow of parameters related to the input matrix size and a consequent early termination.}
    \label{CQRRPT_tall_speed}
\endminipage%
\hspace{1cm}
\minipage{0.475\textwidth}
\vspace{1.1cm}
\centering
    \centering
    \begin{overpic}[width=1.1\linewidth]{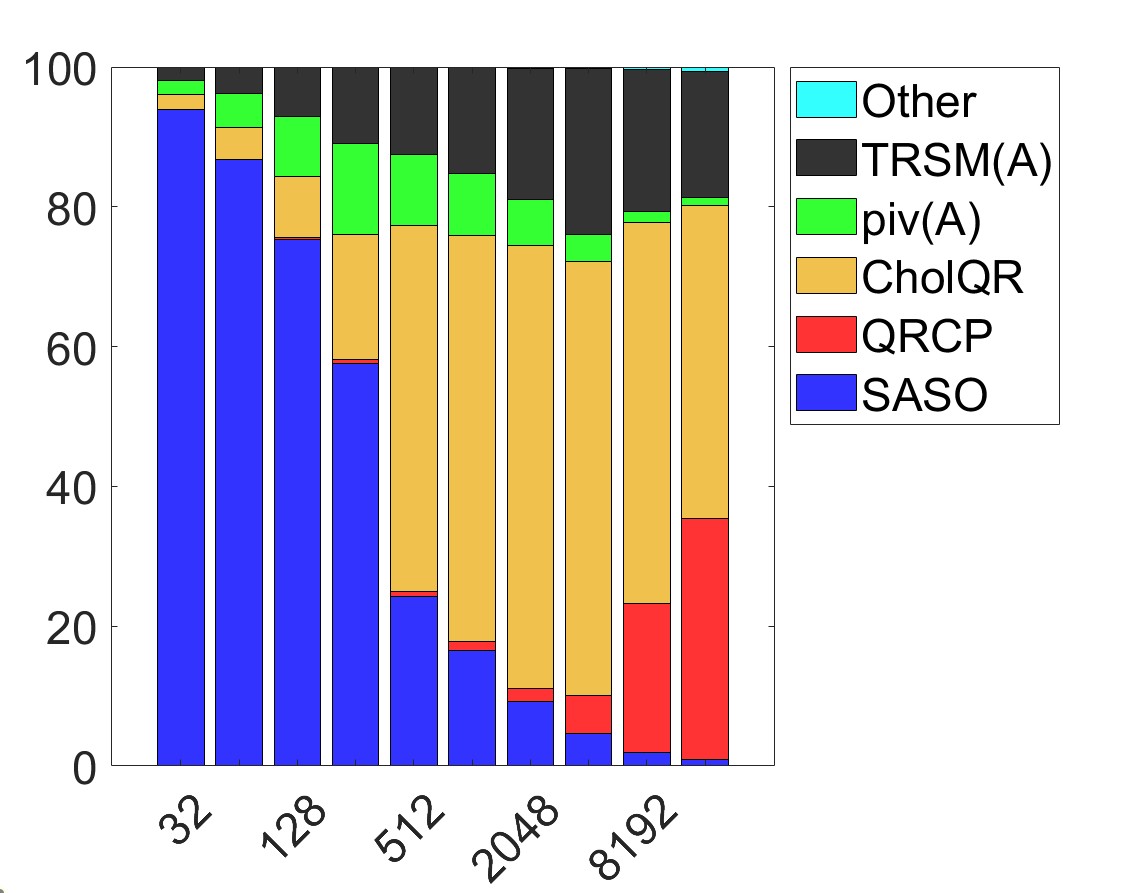}
    \put (-3, 42) {\rotatebox[origin=c]{90}{\textbf{runtime(\%)}}}
    \put (28, -5) {\textbf{columns} ($n$)}
    \end{overpic}
    \vspace{0.2cm}
    \captionof{figure}{\small Percentages of runtime used by CQRRPT's subroutines, for matrices with one million rows and the indicated number of columns.
    Note that although CholQR occupies the dominant portion of runtime across the larger ($n\geq 512$) matrix sizes, its runtime is primarily comprised of vector-vector operations for the smaller matrix sizes. As the matrix size increases, \code{TRSM} routine gradually achieves a \code{GEMM}-like performance level.}
  \label{CQRRPT Inner Speed Fig 2}
\endminipage
\end{figure}
\FloatBarrier

\paragraph{HQRRP performance details.}
\cref{fig:hqrrpvsgeqp3} is our attempt to give a contemporary
refresh of Figures 4 and 5 from the original HQRRP
publication~\cite{MOHvdG:2017:QR}. It is important to note that the
initial HQRRP thread scalability study was performed on Intel Xeon
E5-2695v3 (the Haswell platform) processor with clocked capped at 2.3
GHz featuring ``only'' 14 cores in a single socket with 22nm node size.
Our experiments used dual-socket Intel Xeon Gold 6248R (Cascade Lake
platform) with 24 cores per socket and 48 cores total with 14nm feature
size - a NUMA design.  In essence, the new processor engages a much larger
number of cores in the computation and they have to be coordinated by
on-node interconnect, that is, the design keeps the content of Level 1 caches
coherent through a memory controller state tracking each L1 cache line.
For smaller matrix sizes, lower-level caches, especially Level 3, are very effective in alleviating the demand for cache lines from the main memory and HQRRP behaves as it did on the old Haswell system.
For larger matrix sizes, HQRRP performance drops as its internal pivoted QR function, that is implemented using level 2 \BLAS{} routines, becomes too costly.

\end{document}